\documentclass[draft]{amsart}
\usepackage{amssymb,graphicx,enumerate,amsthm}
\usepackage{multirow}
\usepackage{xcolor}
\usepackage{cite}
\usepackage{pgf,tikz,pgfplots}

\setlength{\textheight}{220mm} \setlength{\textwidth}{160mm}
\setlength{\oddsidemargin}{1.25mm}
\setlength{\evensidemargin}{1.25mm} \setlength{\topmargin}{0mm}

\newcommand{\U}{\mathcal{U}}
\newcommand{\D}{\mathcal{D}}

\newcommand{\UG}{\mathcal{U}\mathcal{G}}
\newcommand{\GG}{\mathcal{G}\mathcal{G}}
\newcommand{\lG}{\ell\mathcal{G}}
\renewcommand{\L}{\mathcal{L}}
\newcommand{\R}{\mathcal{R}}
\renewcommand{\P}{\mathcal{P}}
\renewcommand{\S}{\mathcal{S}}

\newcommand{\Z}{\mathbb{Z}}

\numberwithin{equation}{section}
\newtheorem{theorem}{Theorem}[section]

\newtheorem{corollary}[theorem]{Corollary}

\hyphenation{auto-maton}

\begin{document}

\makeatletter
\def\imod#1{\allowbreak\mkern10mu({\operator@font mod}\,\,#1)}
\makeatother

\author{Ali Kemal Uncu}
   \address{University of Bath, Faculty of Science, Department of Computer Science, Bath, BA2 7AY, U.K.}
   \email{aku21@bath.ac.uk}
   \thanks{The research was partly funded by the Austrian Science Fund (FWF) grant numbers SFB50-07 and SFB50-09, and partly by the EPSRC Grant EP/T015713/1.}

\title[\scalebox{.9}{On double sum generating functions in connection with some classical partition theorems}]{On double sum generating functions in connection with some classical partition theorems}
     
\begin{abstract} We focus on writing double sum representations of the generating functions for the number of partitions satisfying some gap conditions. Some example sets of partitions to be considered are partitions into distinct parts and partitions that satisfy the gap conditions of the Rogers--Ramanujan, G\"ollnitz--Gordon, and little G\"ollnitz theorems. We refine our representations by imposing a bound on the largest part and find finite analogues of these new representations. These refinements lead to many $q$-series and polynomial identities. Additionally, we present a different construction and a double sum representation for the products similar to the ones that appear in the Rogers--Ramanujan identities.
\end{abstract}

\keywords{Euler's partition theorem; Rogers--Ramanujan identities; G\"ollnitz--Gordon identities; little G\"ollnitz identities; Integer partitions; Polynomial identities; $q$-Series}
  
\subjclass[2010]{05A10, 05A15, 05A17, 11B37, 11B65, 11P81, 11P83, 11P84, 11C08}

%11C08		Polynomials
%11P81  	Elementary theory of partitions [See also 05A17]
%11P82  	Analytic theory of partitions
%11P83  	Partitions; congruences and congruential restrictions
%11P84  	Partition identities; identities of Rogers-Ramanujan type
%11B65  	Binomial coefficients; factorials; $q$-identities
%05A10  	Factorials, binomial coefficients, combinatorial functions [See also 11B65, 33Cxx]
%05A15  	Exact enumeration problems, generating functions [See also 33Cxx, 33Dxx]
%05A17  	Partitions of integers [See also 11P81, 11P82, 11P83]

\date{\today}
   
\dedicatory{To the one and only Alexander Berkovich}
   
\maketitle

\section{Introduction}\label{Sec_intro}

Let a partition $\pi$ be a \textit{non-decreasing finite sequence} $(\lambda_1,\lambda_2,\dots)$ of positive integers. The $\lambda_i$ are called parts of the partition $\pi$. We denote the number of parts of $\pi$ with $\#(\pi)$. The sum $\lambda_1+\lambda_2+\dots+\lambda_{\#(\pi)}$ is called the size of $\pi$. Conventionally, we define the empty sequence as the only partition of 0. 

In the recent years, there have been many advancements in representing generating functions for the number of partitions that satisfy some gap conditions as multiple sums \cite{Alladi_Berkovich,Alladi_Berkovich2, BerkovichUncu7, Kagan1, Kagan2, Kagan3, Kanade_Russell, Andrews_Mahlburg_Bringmann, sills}. One common theme most of these works share is that their representations all rely on some combinatorial construction. This is an invaluable asset, as an explicit combinatorial construction may be adapted and exploited for other situations. Here we would like to present the use of the ideas coming from \cite{Kanade_Russell, Kagan1, Kagan2, Kagan3} to many fundamental ``gap--congruence" partition theorems, such as Euler's partition, Rogers--Ramanujan, G\"ollnitz--Gordon and the little G\"ollnitz theorems. We will also refine the construction using the ideas presented in \cite{BerkovichUncu7}, and compare the refinements we get with other known refinements of such generating functions related to these classical results. This will not only give us new double sum representations for generating functions for the number of partitions with gap conditions, but also yield intriguing polynomial and $q$-series identities, such as the following two theorems.

\begin{theorem}\label{Fin_GG_Ali_THM} For any non-negative integer $N$, Let 
\begin{align}
\label{GG1_min_config_size}G (m,n):= \frac{3m^2+m}{2}+ 2mn+n^2.
\intertext{We have }
\label{Fin_GG1_Intro_EQN}\sum_{m,n\geq 0}q^{G(m,n)}{2N-2m-2n+2 \brack m}_q{N-m-n+1 +\lfloor \frac{n}{2}\rfloor \brack \lfloor \frac{n}{2}\rfloor}_{q^4} &=\sum_{m,n\geq 0} q^{m^2+n^2} {N-m+1\brack n}_{q^2}{n\brack m}_{q^2},\\
\label{Fin_GG2_Intro_EQN}\sum_{m,n\geq 0} q^{G(m,n)+2m+2n}{2N-2m-2n \brack m}_q{N-m-n+\lfloor \frac{n}{2}\rfloor \brack \lfloor \frac{n}{2}\rfloor }_{q^4} &= \sum_{m,n\geq 0} q^{m^2+n^2+ 2n} {N-m\brack n}_{q^2}{n\brack m}_{q^2},
\end{align} where $\lfloor x \rfloor$ is the greatest integer $\leq x$.
\end{theorem}

Here and in the rest of the paper we use the standard $q$-series notations \cite{Theory_of_Partitions,Gasper_Rahman}. For variables $a_i$ and $q$ (with $|q|<1$), and a non-negative integer $n$
\begin{align*}
&(a)_n:=(a;q)_n = \prod_{k=0}^{n-1}(1-aq^k),\text{   and   }  (a;q)_\infty:= \lim_{n\rightarrow\infty}(a;q)_n,\\ 
(a_1,&a_2,\dots,a_k;q)_N := (a_1;q)_n(a_2;q)_n\dots (a_k;q)_n\text{ for any }n\in\Z_{\geq0}\cup\{\infty\}.	
\end{align*}
We define the $q$-binomial coefficients as
\begin{equation}
 \label{Binom_def}  \displaystyle {m+n \brack m}_q := \left\lbrace \begin{array}{ll}\frac{(q)_{m+n}}{(q)_m(q)_{n}}&\text{for }m, n \geq 0,\\   0&\text{otherwise.}\end{array}\right.
\end{equation}

\begin{theorem}\label{Intro_THM_little_G} We have
\begin{equation}\label{little_Gollnitz_Difference}
\sum_{m,n\geq 0} \frac{q^{3\frac{m^2+m}{2} + 2nm + n^2 +3n+2m+1}}{(q;q)_m(q^4;q^4)_{\lfloor n/2 \rfloor}} = \frac{1}{(q,q^5,q^6;q^8)_\infty}-\frac{1}{(q^2,q^3,q^7;q^8)_\infty}.
\end{equation}
\end{theorem}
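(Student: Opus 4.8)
The plan is to read the right-hand side of \eqref{little_Gollnitz_Difference} through the two little G\"ollnitz partition theorems together with the combinatorial construction developed in the preceding sections. Recall that $\frac{1}{(q,q^5,q^6;q^8)_\infty}$ and $\frac{1}{(q^2,q^3,q^7;q^8)_\infty}$ are the generating functions for partitions into parts congruent to $1,5,6$ and to $2,3,7$ modulo $8$, respectively, and that by the little G\"ollnitz identities each of these also equals the generating function for a family of partitions obeying the parity-dependent little G\"ollnitz gap conditions. The two families coincide except for the admissible smallest part: one permits a part equal to $1$ while the companion forbids it. Consequently, the difference of the two products is the generating function for those little G\"ollnitz partitions whose smallest part is \emph{exactly} $1$. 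The isolated constant $+1$ in the exponent on the left-hand side of \eqref{little_Gollnitz_Difference} is precisely the contribution of this forced smallest part, and a first sanity check confirms the reading: in the difference of the two products the coefficient of $q$ is $1$ and the coefficient of $q^2$ is $0$, matching the $m=n=0$ term $q^1$ of the double sum and its vanishing at $q^2$.

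First I would peel off the mandatory smallest part $1$, factoring out $q^1$, and record that the gap conditions then force the remaining parts to exceed a fixed threshold. What is left is a little G\"ollnitz partition with a larger minimal part, so the task reduces to showing that its generating function is $\sum_{m,n\geq 0} q^{G(m,n)+3m+3n}/[(q;q)_m(q^4;q^4)_{\lfloor n/2\rfloor}]$ with $G(m,n)$ as in \eqref{GG1_min_config_norm}. Here I would invoke the paper's construction exactly as it is set up for Theorem~\ref{Fin_GG_Ali_THM}: after applying the limit \eqref{Binom_limit}, the left-hand denominators there become $1/[(q;q)_m(q^4;q^4)_{\lfloor n/2\rfloor}]$, with $m$ counting the parts drawn from the base-$q$ chain and $\lfloor n/2\rfloor$ counting the even parts built in base $q^4$. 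The quadratic form $G(m,n)$ is the norm of the minimal admissible configuration for these counts, and the linear shift $3m+3n$ encodes the uniform displacement of all parts induced by removing the part $1$ and respecting the resulting gaps.

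I expect the main obstacle to be the bookkeeping that produces the floor function $\lfloor n/2\rfloor$ and the exact linear shift $3m+3n$. The parity-dependent nature of the little G\"ollnitz gap conditions forces the even and odd parts to be tracked separately and then recombined, and it is this interaction that collapses the naive two-variable count into the single index $n$ with its base-$q^4$ companion indexed by $\lfloor n/2\rfloor$. To make this rigorous I would first establish a finite (bounded largest part) analogue in the manner of Theorem~\ref{Fin_GG_Ali_THM}, so that the minimal-configuration norm $G(m,n)$ and the shifts can be verified on $q$-binomial coefficients via the recurrence \eqref{Binom_rec}, and only then pass to the limit using \eqref{Binom_limit}. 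The delicate point throughout is confirming that the admissibility threshold after the removed $1$ is consistent with the quadratic and linear data of the claimed exponent; the finite-analogue route doubles as the rigorous backbone precisely because it lets one pin these constants down term by term rather than by a global generating-function argument.
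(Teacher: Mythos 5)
Your proposal matches the paper's own proof in essence: the paper establishes Theorem~\ref{lG_double_sum_Thm} by exactly the minimal-configuration argument you describe (with ${}_{2}\pi_{m,n}=(1,4,6,\dots,2n+2,2n+5,\dots,2n+2+3m)$ of norm $L(m,n)+2m+2n+1=G(m,n)+3m+3n+1$ encoding the forced, immobile smallest part $1$), and then obtains \eqref{little_Gollnitz_Difference} by subtracting \eqref{lG2_Ali_EQN} from \eqref{lG1_Ali_EQN} at $x=1$ and invoking the analytic little G\"ollnitz identities \eqref{lG1_orig_EQN}--\eqref{lG2_orig_EQN}, which is precisely your ``difference of products counts partitions with smallest part exactly $1$'' reading. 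Your proposed detour through a bounded-largest-part analogue is sound but unnecessary: the paper's bijective motion rules \eqref{lG_pair_move}--\eqref{lG_crossing_over_odd_singleton} already constitute a complete proof of the infinite identity without any finitization or limit.
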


We shall note that the Theorem~\ref{Intro_THM_little_G} is similar to \cite[Thm~1.2, p. 2]{BerkovichUncu8}.

In this paper, we also present a new combinatorial construction to find new representations of the generating functions with congruence conditions. We will prove the following double series representations of the Rogers--Ramanujan identities related products. 

\begin{theorem}\label{RRprod_intro_thm} We have
\begin{align}
\label{Prod_GF_RR1}\frac{1}{(xq,xq^4;q^5)_\infty} &= \sum_{j\geq0} \frac{x^j q^{j}}{(q^{10},q^{10})_{\lfloor j/2 \rfloor}} \sum_{i=0}^{\lfloor j/2 \rfloor} { \lfloor j/2\rfloor \brack i}_{q^{10}} \frac{(-q^3;q^{10})_{i+\lceil j/2\rceil -\lfloor j/2 \rfloor} (-q^{8};q^{10})_i}{(q^5;q^{10})_{i+\lceil j/2\rceil -\lfloor j/2 \rfloor}} (q^{6})^{\lfloor j/2\rfloor -i},\\
\label{Prod_GF_RR2}\frac{1}{(xq^2,xq^3;q^5)_\infty} &= \sum_{j\geq0} \frac{x^j q^{2j}}{(q^{10},q^{10})_{\lfloor j/2 \rfloor}} \sum_{i=0}^{\lfloor j/2 \rfloor} { \lfloor j/2\rfloor \brack i}_{q^{10}} \frac{(-q;q^{10})_{i+\lceil j/2\rceil -\lfloor j/2 \rfloor} (-q^{6};q^{10})_i}{(q^5;q^{10})_{i+\lceil j/2\rceil -\lfloor j/2 \rfloor}} (q^{2})^{\lfloor j/2\rfloor -i},
\end{align} where $\lceil x\rceil$ is the smallest integer $\geq x$.
\end{theorem}

This work is organized as follows. In Section~\ref{Sec_Uniform_gap} we present new double sum representations of the generating functions of partitions with uniform gap conditions using constructive ideas similar to \cite{BerkovichUncu7, Kagan1, Kagan2}. This includes new representations for the generating functions for the number of unrestricted partitions, partitions into distinct parts, and partitions with Rogers--Ramanujan gap conditions and their polynomial analogues. We write new double sum representations for the generating functions related to G\"ollnitz--Gordon and little G\"ollnitz identities in Sections~\ref{Sec_GG} and \ref{Sec_lG}, respectively. We also discuss finite analogs of these double sums, and prove some polynomial identities related to these finitizations such as Theorem~\ref{Fin_GG_Ali_THM}. We give a double sum representation for the products of type $(q^a,q^{b};q^M)^{-1}_\infty$ in Section~\ref{Sec_Products}. Section~\ref{Sec_Outlook} has a brief summary of the implications of this result and some results the author wants to address in the future.

\section{Partitions with uniform gap conditions, double sum generating functions and their finitizations}\label{Sec_Uniform_gap}

Let $\U$ be the set of all partitions and $\D$ be the set of partitions into distinct parts. Also define
\begin{equation}\label{Def_UG_set}\UG_{k,l} := \{\pi = (\lambda_1,\lambda_2,\dots ,\lambda_{\#(\pi)}) \in \U\ : \lambda_1\geq l,\  \lambda_{i} - \lambda_{i-1} \geq k\text{ for }2\leq i\leq \#(\pi) \},\end{equation} for any $k\in\Z_{\geq0}$ and positive integer $l$. Partitions from the set $\UG_{k,l}$ have \textit{uniform gap conditions} such that the gap between consecutive parts is greater or equal to $k$. It is easy to observe that $\U = \UG_{0,1}$ and $\D = \UG_{1,1}$. 

The partitions with uniform gap conditions are well studied, and appear in many beautiful classical theorems. We would like to remind the reader the most famous results are Euler's Partition Theorem and Rogers--Ramanujan identities.

\begin{theorem}[Euler's Partition Theorem]\label{Euler_Ptt_THM_Combin}
\[\sum_{\pi\in\UG_{1,1}} q^{|\pi|}  = \sum_{\pi\in\mathcal{O}} q^{|\pi|},\] where $\mathcal{O}$ is the set of partitions into odd parts.
\end{theorem}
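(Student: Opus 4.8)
The plan is to prove Euler's Partition Theorem via a generating-function argument, which is the cleanest route and sidesteps the need for an explicit bijection. The key observation is that the left-hand side counts partitions into distinct parts, since $\UG_{1,1}=\D$ by the remark immediately preceding the statement, while the right-hand side counts partitions into odd parts. I would first write each side as an infinite product. For partitions into distinct parts, a part of size $j$ either appears or does not, so
\[
\sum_{\pi\in\UG_{1,1}} q^{|\pi|} = \prod_{j\geq 1}(1+q^j) = (-q;q)_\infty,
\]
using the notation established in the introduction. For partitions into odd parts, each odd part $2j-1$ may be used any non-negative number of times, so
\[
\sum_{\pi\in\mathcal{O}} q^{|\pi|} = \prod_{j\geq 1}\frac{1}{1-q^{2j-1}} = \frac{1}{(q;q^2)_\infty}.
\]

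The heart of the argument is then the identity $(-q;q)_\infty = 1/(q;q^2)_\infty$. First I would telescope the product $(-q;q)_\infty$ by repeatedly applying the elementary factorization $1+q^j = (1-q^{2j})/(1-q^j)$, which gives
\[
\prod_{j\geq 1}(1+q^j) = \prod_{j\geq 1}\frac{1-q^{2j}}{1-q^j}.
\]
The denominator ranges over all positive integers, while the numerator ranges only over the even ones; cancelling the even factors from the denominator against the entire numerator leaves exactly the odd factors $1-q^{2j-1}$ in the denominator, yielding $1/(q;q^2)_\infty$. This manipulation is valid because $|q|<1$ guarantees absolute convergence of all the products involved, so the rearrangement and cancellation are justified.

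The only genuinely delicate point is making the cancellation of infinite products rigorous rather than merely formal. I would handle this by working first with the finite truncations $\prod_{j=1}^{N}(1-q^{2j})/(1-q^j)$ and showing directly that the even factors in the denominator cancel against a subset of the numerator, leaving a quotient that converges to $1/(q;q^2)_\infty$ as $N\to\infty$; the convergence of each side is then a consequence of $|q|<1$. This is the main obstacle in the sense that it is the one step requiring care, though it is entirely standard. An alternative I would mention but not pursue in detail is the classical bijective proof, in which one writes each distinct part as $2^a\cdot m$ with $m$ odd and records its binary structure to produce repeated odd parts; this gives a combinatorial explanation of the identity but is longer to write out carefully than the product manipulation above.
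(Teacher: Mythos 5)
Your proof is correct. Note, however, that the paper itself offers no proof of this statement: Euler's Partition Theorem is quoted there as classical background (Section~\ref{Sec_Uniform_gap}), with the analytic counterpart \eqref{Euler_analytic} also stated without proof, so there is no internal argument to compare yours against. Your route --- identifying $\UG_{1,1}=\D$, writing both sides as the products $(-q;q)_\infty$ and $1/(q;q^2)_\infty$, and establishing their equality via the factorization $1+q^j=(1-q^{2j})/(1-q^j)$ with finite truncations to justify the cancellation --- is the standard textbook proof and is complete. It is worth remarking that the paper's surrounding machinery suggests a different flavor of argument it never carries out: via \eqref{Singlesum_UF_GF_inf} the left-hand side equals $\sum_{j\geq 0}q^{j(j+1)/2}/(q;q)_j$, and Theorem~\ref{Unif_Gap_THM_inf} gives it as a double sum, so one could instead prove the theorem by summing one of these series to the product $1/(q;q^2)_\infty$ (essentially Lebesgue's identity); your product-manipulation proof is more elementary and self-contained, while the series route would tie the statement into the paper's constructive framework. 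One minor quibble: your glosses ``a part of size $j$ either appears or does not'' and ``each odd part may be used any number of times'' deserve the same one-line justification of convergence and term-by-term expansion that you carefully supply for the cancellation step, but this is cosmetic.
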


Let $\mathcal{C}_{i,5}$ be the set of partitions with parts congruent to $\pm i$ modulo 5, for $i=1$ and 2. Then,

\begin{theorem}[Rogers--Ramanujan identities]\label{RR_THM_Combin} For $i=1$ and $2$,
\[\sum_{\pi\in\UG_{2,i}} q^{|\pi|}  = \sum_{\pi\in\mathcal{C}_{i,5}} q^{|\pi|}.\]
\end{theorem}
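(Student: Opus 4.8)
The plan is to reduce the two combinatorial identities to the classical analytic Rogers--Ramanujan identities and then to prove the latter by a finitization built entirely from the $q$-binomial coefficients already introduced. First I would compute each side as a generating function. A partition $\pi\in\UG_{2,i}$ with exactly $n$ parts is obtained from the smallest admissible partition with $n$ parts, namely $(i,i+2,\dots,i+2(n-1))$ of norm $n^2+(i-1)n$, by adding to it an arbitrary partition with at most $n$ parts; since partitions into at most $n$ parts are enumerated by $1/(q)_n$, summing over $n$ gives
\[\sum_{\pi\in\UG_{2,i}}q^{|\pi|}=\sum_{n\geq0}\frac{q^{n^2+(i-1)n}}{(q)_n}.\]
On the other side, choosing independently how many parts of each admissible size to use factors the generating function as
\[\sum_{\pi\in\mathcal{C}_{i,5}}q^{|\pi|}=\prod_{n\geq0}\frac{1}{(1-q^{5n+i})(1-q^{5n+5-i})}=\frac{1}{(q^i,q^{5-i};q^5)_\infty}.\]
Thus the theorem is equivalent to the two analytic identities $\sum_{n\geq0}q^{n^2+(i-1)n}/(q)_n=1/(q^i,q^{5-i};q^5)_\infty$ for $i=1$ and $2$.

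To prove these I would finitize. Writing $a:=i-1\in\{0,1\}$, I define the Schur polynomials
\[D^{(a)}_n:=\sum_{j\geq0}q^{j^2+aj}{n-j\brack j}_q,\]
which by \eqref{Binom_limit} converge to the desired sum $\sum_{n\geq0}q^{n^2+an}/(q)_n$ as $n\to\infty$. Applying the $q$-binomial recurrence \eqref{Binom_rec} termwise and reindexing the shifted sum, I would verify that $D^{(a)}_n$ satisfies the three-term recurrence $D^{(a)}_n=D^{(a)}_{n-1}+q^{\,n-1+a}\,D^{(a)}_{n-2}$, the two values of $a$ being distinguished only by their small-$n$ initial data. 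The decisive step is then the finite Schur identity, which rewrites the same polynomial as a single sign-alternating sum whose generic summand is $(-1)^k\,q^{k(5k+1)/2+ak}$ multiplied by one $q$-binomial coefficient of the form ${n\brack \lfloor (n-5k)/2\rfloor}_q$ (with the lower entry shifted appropriately according to $a$), summed over $k\in\Z$. I would establish this by checking that its right-hand side satisfies the same recurrence and the same initial values, with \eqref{1over_neg_Pochhammer} controlling the vanishing of out-of-range terms.

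Letting $n\to\infty$, the $q$-binomial coefficient on the alternating side tends to $1/(q)_\infty$ for each fixed $k$, so the identity becomes
\[\sum_{n\geq0}\frac{q^{n^2+an}}{(q)_n}=\frac{1}{(q)_\infty}\sum_{k\in\Z}(-1)^k\,q^{\frac{k(5k+1)}{2}+ak}.\]
Rewriting the exponent as $q^{5\binom{k}{2}}(q^{3+a})^k$, the Jacobi triple product in base $q^5$ evaluates the bilateral sum as $(q^{3+a},q^{2-a},q^5;q^5)_\infty$, and dividing by $(q)_\infty=(q,q^2,q^3,q^4,q^5;q^5)_\infty$ collapses the right-hand side precisely to $1/(q^i,q^{5-i};q^5)_\infty$. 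The main obstacle is the finite Schur identity: this is the step that manufactures the modulus $5$ out of nothing, and its inductive verification requires a genuine regrouping of the summation rather than a routine manipulation, so it is the true analytic core of the result, whereas the opening staircase bijection and the concluding Jacobi-triple-product evaluation are comparatively mechanical.
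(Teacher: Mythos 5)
The paper never actually proves this theorem: it is quoted as classical background (with the analytic form \eqref{RR_analytic} simply asserted), and the only argument supplied nearby is the staircase bijection justifying \eqref{Singlesum_UF_GF_inf}, of which your opening step --- minimal partition $(i,i+2,\dots,i+2(n-1))$ of norm $n^2+(i-1)n$ plus a partition into at most $n$ parts generated by $1/(q)_n$ --- is exactly the $k=2$, $l=i$ special case. Everything after that goes beyond the paper and is, in outline, the classical Schur proof, and it is correct: your polynomials $D^{(a)}_n$ do satisfy $D^{(a)}_n=D^{(a)}_{n-1}+q^{\,n-1+a}D^{(a)}_{n-2}$ (one application of \eqref{Binom_rec} plus the shift $j\mapsto j+1$), the limit computations via \eqref{Binom_limit} are right, and the exponent bookkeeping $k(5k+1)/2+ak=5\binom{k}{2}+(3+a)k$ feeds the Jacobi triple product in base $q^5$ to give $(q^{3+a},q^{2-a},q^5;q^5)_\infty$, which after dividing by $(q)_\infty$ yields $1/(q^i,q^{5-i};q^5)_\infty$ for both $i$. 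Two points deserve care in a full write-up: the finite Schur identity for $a=1$ is not obtained from the $a=0$ case by a uniform shift --- the standard form has exponent $k(5k+3)/2$ and a binomial such as ${n+1\brack \lfloor (n-5k+1)/2\rfloor}_q$, and its recurrence verification requires splitting by the parity of $n$, so ``shifted appropriately'' is hiding a genuine case analysis; and the paper's convention makes \eqref{Binom_rec} fail when both arguments vanish, so the boundary terms of your inductive check need the same kind of Kronecker-delta corrections the paper uses in its own recurrence proofs. It is also worth noting the contrast with the paper's stated outlook: the author hopes the double-sum representations \eqref{RR1_double_eqn}--\eqref{RR2_double_eqn} might eventually yield a proof of Rogers--Ramanujan avoiding the Jacobi triple product, whereas your route is self-contained but places JTP squarely at the analytic core.
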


These combinatorial theorems also have their $q$-series counterpart:
\begin{align}
\label{Euler_analytic} \sum_{j\geq 0} \frac{q^{j(j+1)/2}}{(q;q)_j} &= \frac{1}{(q;q^2)_\infty},\\
\label{RR_analytic} \sum_{j\geq 0} \frac{q^{j^2+(i-1)j}}{(q;q)_j} &= \frac{1}{(q^i,q^{5-i};q^5)_\infty},\text{ for }i\in\{1,2\},
\end{align} respectively.

In general, we have \begin{equation}\label{Singlesum_UF_GF_inf}
\sum_{\pi\in\UG_{k,l}} x^{\#(\pi)}q^{|\pi|} = \sum_{j\geq 0} \frac{x^j q^{lj + kj(j-1)/2}}{(q;q)_j}.
\end{equation} The right-hand side summand of \eqref{Singlesum_UF_GF_inf} can be interpreted as the generating function for partitions from $\UG_{k,l}$ and the left-hand side summands of \eqref{Euler_analytic} and \eqref{RR_analytic} are special cases of \eqref{Singlesum_UF_GF_inf}. This interpretation comes from the following argument. One observes that $lj + kj(j-1)/2$ is the size of the partition \[\pi_1 = (l,l+k, l+2k, \dots,l+(j-2)k, l+ (j-1)k),\] which has exactly $j$ parts each $\geq l$, each $k$-distant to its neighboring parts. It is well known that, the $(q;q)^{-1}_j$ is the generating function of partitions $\pi_2 $ into $\leq j$ parts. By adding the parts of $\pi_2$ to the parts of $\pi_1$ starting from the largest part, we get a partition $\pi_3\in\UG_{k,l}$ into exactly $j$ parts. This operation is bijective, and this shows that the summand of the right-hand side of \eqref{Singlesum_UF_GF_inf} is the generating function for the partitions from $\UG_{k,l}$ into exactly $j$ parts. Summing this summand over all possible number of parts ($j\geq 0$) shows that the right-hand side \eqref{Singlesum_UF_GF_inf} is a closed form of the generating function on the left-hand side of \eqref{Singlesum_UF_GF_inf}.

Now, we would like to move on to writing a new closed form for the generating functions, \begin{equation}\label{GenFunc_UG_abs}\sum_{\pi\in\UG_{k,l}}q^{|\pi|},\end{equation} using double sums, inspired by the recent works \cite{Kagan1, Kagan2, BerkovichUncu7}. We define the partition 
\begin{equation}\label{minimal_config_pttn}{}_{m,n}\pi_{k,l} :=(\underbrace{l,l+k,l+2k,\dots,l+(n-1)k}, l+(n-1)k + (k+1),\dots,l+(n-1)k + m(k+1) ),\end{equation} for some positive integer $l$, and non-negative integers $k,\ m,$ and $n$. We define ${}_{0,0}\pi_{k,l}$ to be the empty partition with size 0. 

We call the underbraced portion of the partition its \textit{initial chain} of length $n$. Note that the partition ${}_{m,n}\pi_{k,l}$ is the partition with $n$ parts that have the minimal gap condition $k$, and with $m$ parts (called \textit{singletons}) that have a gap of $k+1$ with their neighbouring parts. Moreover, ${}_{m,n}\pi_{k,l}$ is the partition with the smallest size: \begin{equation}\label{UG_min_config_size} Q_{k,l}(m,n):=(m+n)l+k\frac{(m+n)(m+n-1)}{2}+\frac{m(m+1)}{2} ,\end{equation} which has an initial chain of length $n$ and $m$ singletons all $>l$. We will call such a partition a \textit{minimal configuration}. 
 
Repeating \cite{BerkovichUncu7, Kagan1,Kagan2}, for any fixed pair $(m,n)\in\Z^2_{\geq 0}$, our idea is to start with the minimal configuration, define reversible \textit{rules of motion}, and construct other partitions as the descendants of ${}_{m,n}\pi_{k,l}$. After this combinatorial study we will be able to write a new closed form formula for \eqref{GenFunc_UG_abs} by adding together all possible generating functions related to $(m,n)$ pairs.

For a fixed pair $(m,n)\in\Z^2_{\geq 0}$, we now define the rules of motion. 

\begin{enumerate}[i.]
\item Motion of the singletons:\vspace{.2cm}

Assuming $m$ is positive, one can add any positive integer amount $r_m$ to the largest part, $l+(n-1)k + m(k+1)$, of ${}_{m,n}\pi_{k,l}$ and this creates a new partition, which still has an intact $n$-length initial chain and $m$-parts with larger gaps between parts. We will refer to the addition of the $r_m$ value to the part $l+(n-1)k + m(k+1)$ as \textit{moving forward the part $l+(n-1)k + m(k+1)$ by $r_m$}. This forward motion is reversible since one can easily recover $r_m$ from the outcome partition by moving the largest part backwards to create ${}_{m,n}\pi_{k,l}$. Then, one can move the second largest part, after the largest part, again reversibly with some $r_{m-1} \leq r_m$. The outcome partition would still have the same $n$-length initial chain as  ${}_{m,n}\pi_{k,l}$ and $m$ singletons with gaps $\geq k+1$. In this manner, we can repeat this process for smaller singletons. This way one can reversibly move the $m$ singletons forward by starting from the largest part and moving it by $r_m \geq 0$ steps forward, then the second largest and moving that part $r_{m-1}\geq 0$ steps forward (for any $r_{m-1}$ with $r_{m-1}\leq r_m$), then move the third largest part $r_{m-2}\geq 0$ steps forward (where $r_{m-2}\leq r_{m-1}$), etc. After these forward motions, the outcome partition would still have the same $n$-length initial chain with ${}_{m,n}\pi_{k,l}$ and it will have $m$ scattered singletons with gaps $\geq k+1$ with their consecutive parts. 

We would like to further emphasize the reversibility of these forward motions. Let $pi\in \UG_{k,l}$ with an intact $n$-length initial chain $l,l+k, l+2k, \dots,l+(n-2)k, l+ (n-1)k$ followed by $m$ singletons $\lambda_1,\lambda_2,\dots, \lambda_m$ satisfying $l+(n-1)k+(k+1)\leq \lambda_1 $ and $\lambda_i - \lambda_{i-1} \geq k+1$ for $i=2,3,\dots, m$. Then, $r_i$'s are uniquely determined by $r_i := \lambda_i - (l + (n-1)k+i(k+1))$ for $i=1,2,\dots,m$. We have $r_i \geq r_{i-1}\geq 0$ since $\lambda_i \geq \lambda_{i-1} + (k+1)$ for all $i=2,\dots, m$. By moving $\lambda_i$ backwards $r_i$-steps for all $i=1,2,\dots,m$, we get exactly the minimal configuration ${}_{m,n}\pi_{k,l}$.

The sequence \[(r_1,r_2,\dots,r_{m-1},r_m)\] related to such motions is a non-decreasing finite sequence, where some $r_j$'s might be equal to zero. It is clear that these sequences are in bijection with partitions into $\leq m$ parts. In the light of this bijection we conclude that the generating function for the size of the sequences that define reversible forward motions for the $m$ singletons of ${}_{m,n}\pi_{k,l}$ is the same as the generating function for the number of partitions into $\leq m$ parts, which is \begin{equation}\label{singleton_move_GF}\frac{1}{(q;q)_m}.\end{equation}

\item Forward motion rules starting from the initial chain and crossing singletons:\vspace{.2cm}

We will be moving two parts one step forwards together at once from the end of the initial chain. This way a forward motion will be adding 2 to the total size of the partition. That being said, if $n$ is an odd number, the smallest part $l$ will not be moving. 

Given an initial chain, \[\underbrace{l,l+k,l+2k,\dots, l+(n-2)k,l+(n-1)k},\] we start moving terms forward by splitting the last two parts and moving them one step forward each:  \begin{align}\nonumber&\underbrace{l,l+k,l+2k,\dots, l+(n-2)k,l+(n-1)k}\\\label{initial_chain_split}&\hspace{2cm}\mapsto\underbrace{l,l+k,l+2k,\dots l+(n-3)k}, \underline{l+(n-2)k+1,l+(n-1)k+1}.\end{align} The last two underlined parts with $k$ difference between them is called a \textit{pair} and the smaller part in this pair is in $k+1$ distance to the new initial chain. Note that \eqref{initial_chain_split} is a reversible motion. A pair has $k$ difference between its parts which distinguishes its parts from any singleton that the partition might have. 

One can move a pair forward freely (when permisible) by adding one to both its parts: \begin{equation}
\label{pair_free_move} \underline{x,x+k}\mapsto \underline{x+1,x+k+1}.
\end{equation} While moving a pair forward using~\eqref{pair_free_move}, a pair $\underline{x,x+k}$ may come in $k$ distance to a singleton $x+2k$. Then to move this pair forward, we need to define a crossing over singletons rule of pairs. This can be done as follows: \begin{equation}
\label{pairs_crossing} \underline{x,x+k}\, ,\ x+2k \mapsto x,\ \underline{x+k+1,x+2k+1}.
\end{equation}
There is at least $k+1$ difference between the singleton $ x+2k$ and the closest larger part after it. Hence, after employing \eqref{pairs_crossing}, we still end up with a partition in $\UG_{k,l}$.

The defined motions \eqref{initial_chain_split}, \eqref{pair_free_move}, \eqref{pairs_crossing} are all reversible and they each add 2 to the total size of a partition.

Therefore, one can start from the end of a given chain, split a pair and move that pair forward according to the rules, then go back to the end of the initial chain split another pair and move that pair forward (less than or equal times to the previous pair) and repeat these steps. Just as in the singletons' case, we move the first pair the longest distance, the second pair less than or equal to that distance, etc. This way we never needed to handle pairs crossing over other pairs. Similar to the singletons' case, we can relate the splitting of the pairs from the initial chain and these pairs' forward motions with a partition into $\leq \lfloor n/2\rfloor$ parts, as there can at most be $ \lfloor n/2\rfloor$ pairs, where $\lfloor \cdot \rfloor $ is the standard floor function. We write the related generating function as \begin{equation}\label{pair_movement_GF}\frac{1}{(q^2;q^2)_{\lfloor n/2 \rfloor}}.\end{equation} This is analogous to the previous case. Since every motion adds 2 to the total size of the overall partition we use $q^2$ in the place of $q$ in our generating functions.
\end{enumerate}

All the motions are reversible and for any given partition $\pi$ from $\UG_{k,l}$ one can now find the unique minimal configuration ${}_{m,n}\pi_{k,l}$ that $\pi$ is descends from. Moreover, one can determine $m$ and $n$ essentially effortlessly. 

For example, if we start with the partition \[ (3,5,7,13,16,18,20,22,24,28) \in \UG_{2,3},\] we can directly identify the initial chain by drawing a brace under the parts starting from $l=3$ that are $k=2$ distant to each other: \[ (\underbrace{3,5,7},13,16,18,20,22,24,28).\] Then, we identify all the pairs by examining the parts of the partition in an increasing order and grouping $2$-distant parts with each other: \[ (\underbrace{3,5,7},13,\underline{16,18},\underline{20,22},24,28).\] All the other parts of the partition are singletons and the number of those parts is $m$. The total number of parts in the initial chain and the pairs is $n$. For this example, $m=3$ and $n=7$. To revert this partition back to the minimal configuration \[{}_{3,7}\pi_{2,3} = (\underbrace{3,5,7,9,11,13,15},18,21,24),\] we start by moving the smallest pair $\underline{16,18}$ backwards using the rules \eqref{initial_chain_split}, \eqref{pair_free_move} and \eqref{pairs_crossing} in reverse. In particular, for the pair $\underline{16,18}$ we first employ \eqref{pairs_crossing} in reverse to get \[(\underbrace{3,5,7},\underline{13,15},17,\underline{20,22},24,28).\] This is then followed by applying \eqref{pair_free_move} backwards three times followed by \eqref{initial_chain_split} in reverse to get \[(\underbrace{3,5,7,9,11},17,\underline{20,22},24,28).\] This is followed by moving the new smallest pair $\underline{20,22}$ backwards. Once the backwards motions of all the pairs are finished and these pairs are welded to the initial chain, we move the singletons backwards one-by-one starting from the smallest to the largest.

Putting \eqref{UG_min_config_size}, \eqref{singleton_move_GF}, and \eqref{pair_movement_GF} together, we see that the generating function for the descendant of ${}_{m,n}\pi_{k,l}$ is \begin{equation}\label{GF_fixed_m_n}
\frac{q^{Q_{k,l}(m,n)}}{(q;q)_m(q^2;q^2)_{\lfloor n/2\rfloor}}.
\end{equation} One can directly include $x^{m+n}$ to \eqref{GF_fixed_m_n} and count the number of parts using the variable $x$ in this generating function.
Summing over all $(m,n)\in\mathbb{Z}^2_{\geq0}$, we get a closed form for the generating function of partitions from $\UG_{k,l}$ grouped with respect to their sizes.
\begin{theorem}\label{Unif_Gap_THM_inf} For any non-negative integer $k$ and positive integer $l$ we have
\begin{equation}\label{GF_UG_Double_sum_with_floor}\sum_{\pi\in\UG_{k,l}}x^{\#(\pi)}q^{|\pi|} = \sum_{m,n\geq 0}\frac{x^{m+n}q^{Q_{k,l}(m,n)}}{(q;q)_m(q^2;q^2)_{\lfloor n/2\rfloor}},\end{equation} where $Q_{k,l}(m,n)$ is defined in \eqref{UG_min_config_size}. Equivalently, one can split the even and odd cases for $n$ and write the following by combining the outcome without the presence of a floor function:
\begin{equation}\label{GF_UG_Double_sum}
\sum_{\pi\in\UG_{k,l}}x^{\#(\pi)}q^{|\pi|} = \sum_{m,n\geq 0}\frac{x^{m+2n}q^{Q_{k,l}(m,2n)}(1+xq^{l+(m+2n)k})}{(q;q)_m(q^2;q^2)_{n}}.
\end{equation}
\end{theorem}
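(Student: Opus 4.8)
The plan is to assemble the first identity \eqref{GF_UG_Double_sum_with_floor} directly from the combinatorial machinery already in place, and then to deduce the floor-free identity \eqref{GF_UG_Double_sum} from it by an elementary parity split. For the first identity I would fix a pair $(m,n)\in\Z_{\geq0}^2$ and argue that the two families of rules of motion act on independent data: rule (i) moves only the $m$ singletons and is, by construction, in bijection with partitions into at most $m$ parts, contributing the factor \eqref{singleton_move_GF}; rule (ii) moves only the initial chain and the pairs spawned from it, and is in bijection with partitions into at most $\lfloor n/2\rfloor$ parts, contributing the factor \eqref{pair_movement_GF}. Since the amount of singleton motion and the amount of pair motion decouple, the generating functions multiply, so the descendants of ${}_{m,n}\pi_{k,l}$ are enumerated by the product \eqref{GF_fixed_m_n}, weighted by $x^{m+n}$ to record the $m+n$ parts. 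Summing over all $(m,n)$ then yields the right-hand side of \eqref{GF_UG_Double_sum_with_floor}, provided the descendant families are disjoint and exhaust $\UG_{k,l}$.

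To complete the first identity I would verify exactly this last clause: every $\pi\in\UG_{k,l}$ is a descendant of a unique minimal configuration. Surjectivity is the statement that any admissible partition can be reached by forward moves; injectivity is the statement that the invariants $m$ (the number of singletons) and $n$ (the length of the initial chain), together with the two non-decreasing move sequences, are uniquely recoverable from $\pi$ by running the invertible rules \eqref{initial_chain_split}, \eqref{pair_free_move}, \eqref{pairs_crossing} and the singleton moves backwards. This reconstruction step is where I expect the only real friction: one must check that the backward process is deterministic, i.e. that the decomposition of $\pi$ into ``initial-chain-plus-pairs'' and ``singletons'' is forced, and in particular that the crossing rule \eqref{pairs_crossing} never introduces an ambiguity about which part is a singleton and which belongs to a moving pair.

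Granting the first identity, the second follows by splitting the summation over $n$ into even indices $n=2n'$ and odd indices $n=2n'+1$, both of which satisfy $\lfloor n/2\rfloor=n'$, so the two cases share the denominator $(q;q)_m(q^2;q^2)_{n'}$. The one computation needed is the norm recurrence
\begin{equation*}
Q_{k,l}(m,n+1)-Q_{k,l}(m,n)=l+k(m+n),
\end{equation*}
which I would read off directly from \eqref{UG_min_config_Norm}. Taking $n=2n'$ gives $Q_{k,l}(m,2n'+1)=Q_{k,l}(m,2n')+l+k(m+2n')$, so the odd-index term factors as the even-index term times $xq^{l+k(m+2n')}$. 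Combining the two over the common denominator produces the factor $(1+xq^{l+(m+2n')k})$ and, after renaming $n'$ to $n$, exactly the summand in \eqref{GF_UG_Double_sum}. This last step is purely formal and carries no obstacle.
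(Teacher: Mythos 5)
Your proposal is correct and follows essentially the same route as the paper: the paper's proof is precisely the preceding construction in Section~\ref{Sec_Uniform_gap} (minimal configurations, the two decoupled bijective rules of motion yielding the factors \eqref{singleton_move_GF} and \eqref{pair_movement_GF}, hence the summand \eqref{GF_fixed_m_n}, summed over all $(m,n)$), and the paper likewise obtains \eqref{GF_UG_Double_sum} by the even--odd split of $n$ using $Q_{k,l}(m,2n+1)-Q_{k,l}(m,2n)=l+k(m+2n)$. Note that the reconstruction step you flag as potential friction is handled in the paper at the same level of detail as in your proposal, namely by observing that each motion \eqref{initial_chain_split}, \eqref{pair_free_move}, \eqref{pairs_crossing} is individually invertible and that the ordered (non-decreasing) scheduling of moves makes the backward process deterministic.
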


Some corollaries of this theorem are as follows:
\begin{corollary}\label{UG_Corollary}
\begin{align}
\label{Doublesum_GF_U}\sum_{\pi\in\UG_{0,1}}x^{\#(\pi)} q^{|\pi|} &= \sum_{m,n\geq 0} \frac{x^{m+2n}q^{\frac{m(m+3)}{2}+2n}(1+xq)}{(q;q)_m(q^2;q^2)_n},\\
\label{Doublesum_GF_D}\sum_{\pi\in\UG_{1,1}}x^{\#(\pi)} q^{|\pi|} &= \sum_{m,n\geq 0} \frac{x^{m+2n}q^{m^2+m+2mn+2n^2+n}(1+xq^{m+2n+1})}{(q;q)_m(q^2;q^2)_n},\\
\label{Doublesum_GF_RR1}\sum_{\pi\in\UG_{2,1}}x^{\#(\pi)} q^{|\pi|} &= \sum_{m,n\geq 0} \frac{x^{m+2n}q^{\frac{m(3m+1)}{2}+4mn+4n^2}(1+xq^{2m+4n+1})}{(q;q)_m(q^2;q^2)_n},\\
\label{Doublesum_GF_RR2}\sum_{\pi\in\UG_{2,2}}x^{\#(\pi)} q^{|\pi|} &= \sum_{m,n\geq 0} \frac{x^{m+2n}q^{\frac{m(3m+1)}{2} + 4mn +4n^2 + m+2n}(1+xq^{2m+4n+2})}{(q;q)_m(q^2;q^2)_n}.
\end{align}
\end{corollary}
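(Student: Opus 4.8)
The plan is to obtain all four identities as direct specializations of Theorem~\ref{Unif_Gap_THM_inf}; no new combinatorics is required, since the four sets $\UG_{0,1}$, $\UG_{1,1}$, $\UG_{2,1}$, and $\UG_{2,2}$ are precisely the cases $(k,l)\in\{(0,1),(1,1),(2,1),(2,2)\}$ already covered by the general formula \eqref{GF_UG_Double_sum}. Thus the entire proof reduces to evaluating the norm exponent $Q_{k,l}(m,2n)$ and the tail factor $1+xq^{l+(m+2n)k}$ at these four parameter pairs and checking that the resulting expressions agree with the right-hand sides asserted in the corollary.

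First I would record the specialization of the norm at $n\mapsto 2n$. Substituting into \eqref{UG_min_config_Norm} and collecting terms yields the common backbone
\begin{equation*}
Q_{k,l}(m,2n) = (m+2n)l + 2kn^2 - kn + 2kmn - km + (k+1)\frac{m^2+m}{2},
\end{equation*}
valid for all four cases, so that the remainder of the argument is just substitution into this single identity.

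Then I would evaluate the display above at each pair and read off the tail exponent $l+(m+2n)k$. For $(k,l)=(0,1)$ the quadratic collapses to $\tfrac{m(m+3)}{2}+2n$ with tail $1+xq$, giving \eqref{Doublesum_GF_U}. For $(k,l)=(1,1)$ it becomes $m^2+m+2mn+2n^2+n$ with tail $1+xq^{m+2n+1}$, giving \eqref{Doublesum_GF_D}. For $(k,l)=(2,1)$ it becomes $\tfrac{m(3m+1)}{2}+4mn+4n^2$ with tail $1+xq^{2m+4n+1}$, giving \eqref{Doublesum_GF_RR1}. Finally, for $(k,l)=(2,2)$ it becomes $\tfrac{m(3m+1)}{2}+4mn+4n^2+m+2n$ with tail $1+xq^{2m+4n+2}$, giving \eqref{Doublesum_GF_RR2}; here it is convenient to note that the extra $m$ contributed by $l=2$ is absorbed via $\tfrac{3m^2+m}{2}+m=\tfrac{3m^2+3m}{2}$.

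Since every step is a substitution followed by collecting like terms, there is no genuine obstacle; the only thing demanding care is the arithmetic of the quadratic form, in particular keeping the cross term $2kmn$ and the linear corrections $-kn$ and $-km$ straight. Accordingly I would present the single general evaluation of $Q_{k,l}(m,2n)$ once and then tabulate the four specializations, rather than repeating essentially the same algebra four separate times.
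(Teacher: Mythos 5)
Your proposal is correct and matches the paper's own route: the corollary is obtained exactly as you describe, by specializing \eqref{GF_UG_Double_sum} of Theorem~\ref{Unif_Gap_THM_inf} at $(k,l)\in\{(0,1),(1,1),(2,1),(2,2)\}$, and your evaluation of $Q_{k,l}(m,2n)$ and the tail factor $1+xq^{l+(m+2n)k}$ checks out in all four cases. Recording the general quadratic once and tabulating the specializations is a tidy way to present what the paper leaves implicit.
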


We would like to point out that comparing right-hand sides of \eqref{Singlesum_UF_GF_inf} and \eqref{GF_UG_Double_sum_with_floor} gives the following theorem.
\begin{theorem}\label{Single_sum_to_double_sum_THM} \begin{equation}\label{Double_to_single} \sum_{j\geq 0} \frac{x^j q^{lj + kj(j-1)/2}}{(q;q)_j}= \sum_{m,n\geq 0}\frac{x^{m+n}q^{Q_{k,l}(m,n)}}{(q;q)_m(q^2;q^2)_{\lfloor n/2\rfloor}},\end{equation} where $Q_{k,l}(m,n)$ is defined as in \eqref{UG_min_config_size}.\end{theorem}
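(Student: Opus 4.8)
The plan is to recognize \eqref{Double_to_single} as an equality between two closed forms for one and the same generating function, so that the theorem follows with essentially no new computation. The left-hand side of \eqref{Double_to_single} is exactly the right-hand side of \eqref{Singlesum_UF_GF_inf}, which the skeleton-plus-partition argument (adding an arbitrary partition $\pi_2$ into at most $j$ parts to the arithmetic progression $\pi_1$) identifies with $\sum_{\pi\in\UG_{k,l}}x^{\#(\pi)}q^{|\pi|}$. The right-hand side of \eqref{Double_to_single} is the right-hand side of \eqref{GF_UG_Double_sum_with_floor}, which Theorem~\ref{Unif_Gap_THM_inf} identifies with the same series. Hence the two sides agree, and I would record precisely this one-line argument as the proof.

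For a verification that stays inside the realm of $q$-series (independent of the partition model), I would instead compare coefficients of $x^j$. Since every summand on the right carries $x^{m+n}$, the coefficient of $x^j$ on the right is the finite sum $\sum_{m+n=j} q^{Q_{k,l}(m,n)}/\bigl((q;q)_m(q^2;q^2)_{\lfloor n/2\rfloor}\bigr)$, while on the left it is $q^{lj+kj(j-1)/2}/(q;q)_j$. The useful observation is the elementary expansion of \eqref{UG_min_config_Norm},
\[
Q_{k,l}(m,n) = l\,(m+n) + k\,\frac{(m+n)(m+n-1)}{2} + \frac{m(m+1)}{2},
\]
valid for all $m,n\ge 0$; with $j=m+n$ fixed, the entire dependence on $k$ and $l$ factors out as $q^{lj+kj(j-1)/2}$, matching the left-hand side. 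Cancelling this common factor reduces the theorem to the single $k,l$-free identity
\[
\frac{1}{(q;q)_j} \;=\; \sum_{\substack{m+n=j\\ m,n\ge 0}} \frac{q^{m(m+1)/2}}{(q;q)_m\,(q^2;q^2)_{\lfloor n/2\rfloor}} .
\]

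To prove this last identity I would remove the floor by splitting the inner sum on the parity of $n$, exactly as in the passage from \eqref{GF_UG_Double_sum_with_floor} to \eqref{GF_UG_Double_sum}. Writing $a=j-2t$ and pairing the even index $n=2t$ with the odd index $n=2t+1$ (both with $\lfloor n/2\rfloor=t$), the factorization $(q;q)_a=(1-q^a)(q;q)_{a-1}$ forces the cancellation
\[
\frac{q^{a(a+1)/2}}{(q;q)_a}+\frac{q^{a(a-1)/2}}{(q;q)_{a-1}}=\frac{q^{a(a-1)/2}}{(q;q)_a},
\]
with the convention \eqref{1over_neg_Pochhammer} covering the boundary $a=0$. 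This collapses the double sum to $\sum_{t\ge0} q^{(j-2t)(j-2t-1)/2}/\bigl((q^2;q^2)_t(q;q)_{j-2t}\bigr)$, so the remaining task is to show this equals $1/(q;q)_j$; equivalently, after multiplying by $(q;q)_j$ and using $\tfrac{(q;q)_j}{(q;q)_{j-2t}(q^2;q^2)_t}={j\brack 2t}_q(q;q^2)_t$, one must prove $\sum_{t\ge0}{j\brack 2t}_q(q;q^2)_t\,q^{(j-2t)(j-2t-1)/2}=1$. I expect this to be the only real obstacle: the $k,l$-bookkeeping disappears for free, but disposing of the floor function and confirming the surviving one-parameter sum telescopes to $1/(q;q)_j$ requires the parity split above followed by a short induction on $j$. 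This is exactly why the generating-function comparison of the first paragraph is the shorter and preferable argument.
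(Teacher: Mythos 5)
Your first paragraph is precisely the paper's own proof: the theorem is introduced there with the remark that it follows by comparing the right-hand sides of \eqref{Singlesum_UF_GF_inf} and \eqref{GF_UG_Double_sum_with_floor}, both being closed forms of $\sum_{\pi\in\UG_{k,l}}x^{\#(\pi)}q^{|\pi|}$, so your proposal is correct and takes essentially the same route. Your supplementary $q$-series verification also mirrors the paper's secondary hypergeometric proof: the paper sets $M=m+n$ (your observation that $Q_{k,l}(m,n)=l(m+n)+k\binom{m+n}{2}+\binom{m+1}{2}$ is the same reduction, carried out there via \cite[p.351, I.10]{Gasper_Rahman}) and evaluates the resulting inner sum \eqref{ChuChu} by $q$-Chu--Vandermonde, which is equivalent to the one-parameter identity $\sum_{t\geq 0}{j\brack 2t}_q(q;q^2)_t\,q^{(j-2t)(j-2t-1)/2}=1$ that you correctly isolate but leave to an induction sketch.
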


We can also prove \eqref{Double_to_single} using basic hypergeometric series techniques. 
\begin{proof}Let $M=m+n$, after simplifying the $q$-Pochhammer symbols using \cite[p.351, I.10]{Gasper_Rahman}, we get \begin{equation}\label{Double_to_single_sum_M} \sum_{m,n\geq 0}\frac{x^{m+n}q^{Q_{k,l}(m,n)}}{(q;q)_m(q^2;q^2)_{\lfloor n/2\rfloor}} = \sum_{M=0}^\infty \frac{x^M q^{lM + kM(M-1)/2+M(M+1)/2}}{(q;q)_M} \sum_{n\geq 0} \frac{(q^{-M};q)_n}{(q^2;q^2)_{\lfloor n/2 \rfloor}}(-1)^n.\end{equation} Now we focus on the inner sum on the right-hand side. We first split the even and odd cases of $n$, and rewrite the $q$-Pochhammer symbols in the same base: \begin{align*}\sum_{n\geq 0}\frac{(q^{-M};q)_n}{(q^2;q^2)_{\lfloor n/2 \rfloor}}(-1)^n &= \sum_{n \geq 0} \frac{(q^{-M}; q)_{2n}}{(q^2;q^2)_n} - \sum_{n\geq 0} \frac{(q^{-M};q)_{2n+1}}{(q^2;q^2)_{n}}, \\ &= \sum_{n \geq 0} \frac{(q^{-M}, q^{-M+1};q^2)_{n}}{(q^2;q^2)_n} - \sum_{n\geq 0} \frac{(q^{-M}, q^{-M+1};q^2)_{n} (1-q^{-M+2n})}{(q^2;q^2)_{n}}.\\
\intertext{Then, it is easy to see that this expression reduces to} &= q^{-M} \sum_{n \geq 0} \frac{(q^{-M}, q^{-M+1};q^2)_{n}}{(q^2;q^2)_n} q^{2n}.
\end{align*} This terminating series can be summed using the $q$-Chu--Vandermonde identity \cite[p.354, II.6]{Gasper_Rahman} \[\sum_{i\geq 0} \frac{(q^{-n},a;q)_i}{(q,b;q)_i}q^j = \frac{(b/a;q)_n}{(b;q)_n}a^n\] with $(a,n,b,q) = (q^{-M+1},M/2,0,q^2)$ if $M$ is even, and $(a,n,b,q) = (q^{-M},(M-1)/2,0,q^2)$ otherwise. This shows that \begin{equation}\label{ChuChu} \sum_{n\geq 0} \frac{(q^{-M};q)_n}{(q^2;q^2)_{\lfloor n/2 \rfloor}}(-1)^n = q^{-M(M+1)/2}.\end{equation} Substituting \eqref{ChuChu} in \eqref{Double_to_single_sum_M} yields \eqref{Double_to_single}.
\end{proof}

Now we will move onto putting a bound on the largest part of the partitions to be counted and the polynomial analog of Theorem~\ref{Single_sum_to_double_sum_THM}. For any non-negative integer $N$, let $\UG_{k,l,N}$ be the set of partitions from $\UG_{k,l}$ with the extra constraint that all the parts are $\leq N$. In the spirit of \eqref{Singlesum_UF_GF_inf}, it is easy to see that the generating function for the number of partitions is \begin{equation}\label{UG_bounded_single_sum} \sum_{\pi\in\UG_{k,l,N}} x^{\#(\pi)}q^{|\pi|}= \sum_{j \geq 0}x^j q^{lj + kj(j-1)/2} {N-l-(j-1)k+j \brack j}_q + \chi(0\leq N<l-k),
\end{equation} where \begin{equation}\label{Chi_def} \chi(\text{``statement"})=\left\lbrace \begin{array}{ll}
1, & \text{if the statement is true,}\\
0, & \text{otherwise.}
\end{array} \right.\end{equation}In this finitization, we make sure that the $\pi_2$ (for some $j$) in the construction of \eqref{Singlesum_UF_GF_inf} has all its parts $\leq N-l-(j-1)k$ by replacing the $q$-Pochhammer $(q;q)^{-1}_j$ with the necessary $q$-binomial coefficient, which is the generating function for the number of partitions in a $j \times (N-l-(j-1)k)$ box. Moreover, the empty partition is an element of $\UG_{k,l,N}$ for any non-negative $N$. Empty partition does not get counted by the right-hand side sum of \eqref{UG_bounded_single_sum} if $0\leq N < l-k$. Therefore, we add the reflective correction term to our calculations.

The generating functions of Theorem~\ref{Unif_Gap_THM_inf} that are represented as double sums are also suitable for this type of finitization. We can find the finite versions of the generating function of Theorem~\ref{Unif_Gap_THM_inf} by studying the defined motions with care. 

We would like to generate all the partitions in $\UG_{k,l,N}$ for some non-negative integer $N$ using minimal configurations. In the unbounded case, given minimal configuration ${}_{m,n}\pi_{k,l}$, the free forward motion of the singletons corresponds to a partition into $\leq m$ parts, which has the generating function \eqref{singleton_move_GF}. If we require all the parts of the outcome partition to be $\leq N$, we can only move the largest singleton of ${}_{m,n}\pi_{k,l}$ to the upper bound $N$. Recall that the largest part of ${}_{m,n}\pi_{k,l}$ is $l+(n-1)k + m(k+1)$. This part can only move $N - (l+(n-1)k + m(k+1))$ steps forward before reaching the upper bound on the largest part $N$. Therefore, now the motion of the singletons correspond to partitions into $\leq m$ parts each $\leq N-(l+(n-1)k + m(k+1))$. In other words, the motion of the singletons corresponds to partitions that fit in a $m\times (N-(l+(n-1)k + m(k+1)))$ box. The corresponding generating function for these partitions is \begin{equation}\label{UG_singleton_bunded_move_GF}
{N-(l+(n-1)k + m(k+1)) + m \brack m}_q,
\end{equation} instead of $(q;q)^{-1}_m$ of \eqref{singleton_move_GF}.

Similarly, we can move the largest part $l+(n-1)k$ of the initial chain of ${}_{m,n}\pi_{k,l}$ forward till $N$. Observe that a pair crossing over a singleton, defined by the motion \eqref{pairs_crossing}, darts the pair $k$ steps forward. We need to include crossing over all $m$-singletons in our construction. It is easily seen that crossing over all $m$ singletons darts forward a pair an extra $km$ steps. Therefore, we see that the tail end $\underline{\dots,l+(n-2)k,l+(n-1)k}$ of the initial chain can move $N-(l+(n-1)k) - km$ steps forward before the largest part $l+(n-1)k$ reaches the upperbound $N$. Hence, the related generating function for the motion of all $\lfloor n/2 \rfloor$ pairs one splits from the initial chain of the minimal configuration ${}_{m,n}\pi_{k,l}$ is given by \begin{equation}\label{UG_pairs_bunded_move_GF}
{N-(l+(n-1)k) - km + \lfloor n/2 \rfloor \brack \lfloor n/2\rfloor}_{q^2}.
\end{equation}

Once again the empty partition is missed by the $q$-binomial coefficients if $N<l-k$. Hence, we need to include the same correction term as in \eqref{UG_bounded_single_sum}. Putting \eqref{UG_singleton_bunded_move_GF}, \eqref{UG_pairs_bunded_move_GF} and the correction term together we see get the following. 

\begin{theorem}\label{Unif_Gap_THM_Bounded} For any non-negative integers $N$ and $k$, and a positive integer $l$ we have
\begin{align}\label{GF_UG_Bounded_Double_sum_with_floor}\sum_{\pi\in\UG_{k,l,N}}&x^{\#(\pi)}q^{|\pi|}\\\nonumber &\hspace{-1cm}= \sum_{m,n\geq 0}x^{m+n}q^{Q_{k,l}(m,n)}{N-(l+(n-1)k + m(k+1)) + m \brack m}_q{N-(l+(n-1)k) - km + \lfloor n/2 \rfloor \brack \lfloor n/2\rfloor}_{q^2}\\\nonumber&\hspace{-1cm}+ \chi(0\leq N<l-k),\end{align} where $Q_{k,l}(m,n)$ is defined in \eqref{UG_min_config_size}, and $\chi$ is defined in \eqref{Chi_def}.
\end{theorem}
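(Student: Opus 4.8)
The plan is to run the same bijective machinery used for Theorem~\ref{Unif_Gap_THM_inf}, now keeping track of the ceiling $N$ on the largest part. Recall that in the unbounded setting every $\pi\in\UG_{k,l}$ is produced from a unique minimal configuration ${}_{m,n}\pi_{k,l}$ by a singleton motion (a partition $\mu$ into $\leq m$ parts) together with a pair motion (a partition $\nu$ into $\leq\lfloor n/2\rfloor$ parts, each step weighted by $q^2$), the total norm being $Q_{k,l}(m,n)+|\mu|+2|\nu|$ and the part count being $m+n$. The first task is to show that imposing ``largest part $\leq N$'' factors into two independent box constraints, one on $\mu$ and one on $\nu$, which then replace the two unbounded generating functions \eqref{singleton_move_GF} and \eqref{pair_movement_GF} by the two $q$-binomial coefficients \eqref{UG_singleton_bunded_move_GF} and \eqref{UG_pairs_bunded_move_GF}.

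First I would pin down the two box widths. For the singletons, the largest singleton of ${}_{m,n}\pi_{k,l}$ sits at $l+(n-1)k+m(k+1)$, so under the ceiling it can advance at most $N-(l+(n-1)k+m(k+1))$, forcing $\mu$ into an $m\times\big(N-(l+(n-1)k+m(k+1))\big)$ box and giving \eqref{UG_singleton_bunded_move_GF}. For the pairs I would track the top of a pair split off the initial chain, which starts at $l+(n-1)k$: each free step \eqref{pair_free_move} advances it by $1$ while each crossing \eqref{pairs_crossing} advances it by $k+1$, and both cost two units of norm. Hence a pair that takes $t$ steps and crosses all $m$ singletons has its top at exactly $l+(n-1)k+mk+t$, so the ceiling $\leq N$ becomes $t\leq N-(l+(n-1)k)-mk$; this is the box width in \eqref{UG_pairs_bunded_move_GF}. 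The point worth stressing is that this final top position depends only on the step count $t$ and on the (fixed) number $m$ of singletons crossed, and not on where those singletons currently sit.

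That last observation is what I expect to be the main obstacle, because naively the two motions are coupled: they share the single ceiling $N$, a pair crossing a singleton lowers that singleton's value by $2k$, and sliding the singletons up first changes where the crossings occur. The resolution I would argue is that the advancement of a pair's top decomposes additively as $(\text{number of free/splitting steps})\cdot 1+(\text{number of crossings})\cdot(k+1)$, so for a prescribed step count and a prescribed number of crossings the top position is determined independently of the singleton positions, while each step contributes exactly $2$ to the norm irrespective of its type. Consequently the norm added by the pair motion is $2|\nu|$, and the binding ceiling constraint is felt only by the largest pair (the one that has crossed all $m$ singletons), yielding $\nu_1\leq N-(l+(n-1)k)-mk$; every smaller pair and every singleton then respects the ceiling automatically. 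I would make this rigorous by fixing an order of operations, checking that the resulting map from $(\mu,\nu)$ ranging over the two boxes to $\UG_{k,l,N}$ is a well-defined bijection onto the descendants of ${}_{m,n}\pi_{k,l}$ whose largest part is $\leq N$.

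Finally I would assemble the pieces. For each $(m,n)$ the descendants of ${}_{m,n}\pi_{k,l}$ lying in $\UG_{k,l,N}$ have generating function $x^{m+n}q^{Q_{k,l}(m,n)}$ times the product of \eqref{UG_singleton_bunded_move_GF} and \eqref{UG_pairs_bunded_move_GF}, and summing over all $(m,n)\in\Z_{\geq0}^2$ produces the double sum in \eqref{GF_UG_Bounded_Double_sum_with_floor}. The one element not captured this way is the empty partition: for $m=n=0$ both $q$-binomial coefficients reduce to ${N-l+k\brack 0}$, which equals $1$ when $N\geq l-k$ but $0$ when $N<l-k$, whereas the empty partition always belongs to $\UG_{k,l,N}$; moreover when $0\leq N<l-k$ no nonempty partition fits, so every other term vanishes as well. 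Adding the term $\chi(0\leq N<l-k)$ therefore repairs exactly this single omission, completing the proof.
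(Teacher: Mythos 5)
Your proposal is correct and follows essentially the same route as the paper: the paper's proof is precisely this combinatorial finitization, replacing \eqref{singleton_move_GF} and \eqref{pair_movement_GF} by the boxed $q$-binomial coefficients \eqref{UG_singleton_bunded_move_GF} and \eqref{UG_pairs_bunded_move_GF} via the same bounds $N-(l+(n-1)k+m(k+1))$ and $N-(l+(n-1)k)-km$ (each crossing advancing a pair's top by $k+1$ rather than $1$), together with the same $\chi(0\leq N<l-k)$ correction for the empty partition. Your explicit observation that the pair's final top position depends only on the step count and the fixed number $m$ of crossings, not on the singletons' current positions, is a welcome sharpening of a decoupling the paper leaves implicit.
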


Comparing the right-hand sides of \eqref{UG_bounded_single_sum} and \eqref{GF_UG_Bounded_Double_sum_with_floor} yields the following analytic theorem.

\begin{theorem}\label{UG_polynomial_equality_THM} For any non-negative integers $N$ and $k$, and a positive integer $l$ we have
\begin{align}\label{UG_polynomial_equality}\sum_{j \geq 0} &x^j q^{lj + kj(j-1)/2} {N-l-(j-1)k+j \brack j}_q\\\nonumber &= \sum_{m,n\geq 0}x^{m+n}q^{Q_{k,l}(m,n)}{N-(l+(n-1)k + m(k+1)) + m \brack m}_q{N-(l+(n-1)k) - km + \lfloor n/2 \rfloor \brack \lfloor n/2\rfloor}_{q^2},\end{align} where $Q_{k,l}(m,n)$ is defined in \eqref{UG_min_config_size}.
\end{theorem}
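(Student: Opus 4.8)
The plan is to recognize that \eqref{UG_polynomial_equality} is not an independent identity but the immediate consequence of possessing two closed forms for one and the same generating function. First I would invoke \eqref{UG_bounded_single_sum}, which asserts that the single sum on the left-hand side of \eqref{UG_polynomial_equality}, augmented by the correction term $\chi(0\le N<l-k)$, equals $\sum_{\pi\in\UG_{k,l,N}}x^{\#(\pi)}q^{|\pi|}$. Next I would invoke Theorem~\ref{Unif_Gap_THM_Bounded}, that is \eqref{GF_UG_Bounded_Double_sum_with_floor}, which asserts that the double sum on the right-hand side of \eqref{UG_polynomial_equality}, augmented by the \emph{same} correction term $\chi(0\le N<l-k)$, equals that same generating function.

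Having both representations in hand, I would complete the proof simply by equating them: since the two expressions evaluate to the identical generating function and each carries the same additive term $\chi(0\le N<l-k)$, that term cancels and I am left precisely with \eqref{UG_polynomial_equality}. Notably, no case analysis on whether $0\le N<l-k$ holds is required, because the correction is the same on both sides irrespective of its value. Thus the genuine mathematical content resides entirely in the bijective construction that produced \eqref{UG_bounded_single_sum} and Theorem~\ref{Unif_Gap_THM_Bounded}; the present statement is merely the formal comparison of those two already-established results, and this is the route I would follow.

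For readers who prefer a self-contained analytic derivation that does not appeal to the combinatorial interpretation, I would instead attempt to mirror the hypergeometric collapse used in the unbounded case in \eqref{Double_to_single_sum_M}--\eqref{ChuChu}: reindex the double sum, split the $n$-sum into even and odd classes to remove the floor function, and reduce the resulting inner sum by a finite $q$-Chu--Vandermonde evaluation. I expect the main obstacle here to be the $N$-dependence of the two $q$-binomial coefficients on the right-hand side: unlike the factors $(q;q)_m^{-1}$ and $(q^2;q^2)^{-1}_{\lfloor n/2\rfloor}$ of the infinite case, the box constraints couple $m$, $n$, and $N$ in the upper arguments, so the clean separation that made \eqref{ChuChu} a single Chu--Vandermonde application is no longer directly available. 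One would likely need a finite polynomial analog of \eqref{ChuChu} together with careful bookkeeping of the truncation limits, which is exactly why the combinatorial comparison above is the cleaner path to the theorem.
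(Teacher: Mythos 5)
Your main argument is correct and is in fact exactly how the paper itself first establishes the theorem: the text preceding the statement says that comparing the right-hand sides of \eqref{UG_bounded_single_sum} and \eqref{GF_UG_Bounded_Double_sum_with_floor} ``yields the following analytic theorem,'' and your observation that the identical correction term $\chi(0\leq N<l-k)$ appears on both sides and cancels without any case analysis is precisely the right bookkeeping. Where you diverge from the paper is in the displayed proof environment: the author, noting the combinatorial derivation already suffices, supplies an \emph{independent} $q$-series verification rather than the Chu--Vandermonde collapse you sketch as a fallback. That proof treats $k=0$ separately (both sides are summed in closed form via the $q$-exponential summation and the $q$-binomial theorem, each reducing to $(xq^l;q)_{N-l+1}^{-1}$), and for $k>0$ uses \eqref{Binom_rec} to show that the summands, and hence the sums ${}_l\S(N,q)$ and ${}_r\S(N,q)$, satisfy one and the same recurrence
\begin{equation*}
\S(N,q)=\S(N-1,q)+xq^{N}\S(N-k-1,q)+x^{2}q^{2N-k}\S(N-2k,q)+\delta_{N,l-k}+xq^{N}\delta_{N,l},
\end{equation*}
with identical vanishing initial conditions for $N<l-k$; the delta terms compensate for the failure of \eqref{Binom_rec} when both arguments vanish. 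This recurrence route sidesteps exactly the obstacle you correctly identified in your second paragraph --- the $N$-dependent coupling of $m$ and $n$ in the upper arguments of the $q$-binomials, which blocks a clean finite analog of \eqref{ChuChu} --- so your instinct that the hypergeometric path is the harder one is vindicated, and the recurrence method is the natural analytic alternative you were looking for. In short: your proof is valid and matches the paper's combinatorial justification; the paper's formal proof buys a self-contained analytic confirmation that does not lean on the bijective construction, at the cost of a separate $k=0$ case and recurrence verifications (which the paper notes can be automated).
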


Theorem~\ref{UG_polynomial_equality_THM} is combinatorially proven by the above construction, but we would also like to give a generating function proof using $q$-series techniques. Here we would also like to mention that the choice $N\geq l$ is conventional. We pick this convention only to make our initial conditions of our recurrences simpler.

\begin{proof} Observe that if $k$ is positive both sides of \eqref{UG_polynomial_equality} are polynomials. This is not true for the $k=0$ case and we want to treat this case separately from the rest. Let $k=0$, then the $q$-exponential summation \cite[p.354, II.2]{Gasper_Rahman} \[\sum_{i\geq 0 } \frac{q^{i(i-1)/2} z^i}{(q;q)_i} = (-z;q)_\infty,\] and even-odd split of the variable $n$ followed by the $q$-binomial theorem \cite[p.354, II.4]{Gasper_Rahman} \[\sum_{i\geq 0 } \frac{(a;q)_i}{(q;q)_i} z^i = \frac{(az;q)_\infty}{(z;q)_\infty}\] on the left-hand side yields
\begin{align*}
\sum_{j \geq 0} x^j q^{lj } {N-l+j \brack j}_q &= \frac{1}{(xq^l;q)_{N-l+1}},
\intertext{and}
\sum_{m,n\geq 0}x^{m+n}q^{(m+n)l+m(m+1)/2}{N-l \brack m}_q&{N-l+ \lfloor n/2 \rfloor \brack \lfloor n/2\rfloor}_{q^2} \\&= \sum_{m\geq 0} x^m q^{m(l+1)+m(m-1)/2} {N-l \brack m}_q \sum_{n\geq 0}x^n q^{nl}{N-l+ \lfloor n/2 \rfloor \brack \lfloor n/2\rfloor}_{q^2} \\
&= (-xq^{l+1};q)_{N-l} \left( (1+xq^l) \sum_{n\geq 0}x^{2n} q^{2nl}{N-l+ n \brack n}_{q^2}\right) \\
&= \frac{(-xq^l;q)_{N-l+1}}{(x^2 q^{2l};q^2)_{N-l+1}} = \frac{1}{(xq^l;q)_{N-l+1}}.
\end{align*} 

Now, assume that $N$ is non-negative, and the pair $k$ and $l$ are two fixed positive integers. Denote the left-hand side and right-hand side sums of \eqref{UG_polynomial_equality} by ${}_l\S(N,q)$ and ${}_r\S(N,q)$, respectively. Similarly, denote the summands of ${}_l\S(N,q)$ and ${}_r\S(N,q)$ by $\L_j(N,q)$ and $\R_{m,n}(N,q)$, respectively. 

Observe that $\L_j(N,q)$ satisfies the recurrence \begin{equation}\label{UG_F_recurrence}
\L_j(N,q)= \L_j(N-1,q) + xq^N \L_{j-1}(N-k,q) + \delta_{j,0}\,\delta_{ N, l-k},
\end{equation} which is the classical recurrence for the $q$-binomial coefficients \begin{equation}
 \label{Binom_rec} {m+n \brack m}_q = {m+n-1 \brack m}_q + q^{n} {m+n-1 \brack m-1}_q.
\end{equation} The correction term is necessary due to the fact that \eqref{Binom_rec} fails when both arguments $m$ and $n$ of the $q$-binomial coefficient are 0. By summing over both sides of \eqref{UG_F_recurrence} from $j=0$ to infinity we get a recurrence for ${}_l\S(N,q)$ as \begin{equation} \label{UG_L_recurrence} {}_l\S(N,q) = {}_l\S(N-1,q) + xq^N {}_l\S(N-k,q) + \delta_{N,l-k}.
\end{equation} This recurrence and the initial conditions  \begin{equation}\label{UG_L_initial_conds}
{}_l\S(i,q) = 0, \text{ if   }  i < l-k,
\end{equation} uniquely define the entire sequence for all integers $N$. We iterate the recurrence for the last term of \eqref{UG_L_recurrence}. This yields \begin{equation}\label{UG_L_iterated recurrence}
{}_l\S(N,q) = {}_l\S(N-1,q) + xq^N {}_l\S(N-k-1,q) + x^2q^{2N-k}{}_l\S(N-2k,q)+\delta_{N,l-k}+xq^{N}\delta_{N,l}.
\end{equation} This recurrence with the initial conditions \eqref{UG_L_initial_conds} defines ${}_l\S(N,q)$ uniquely for every integer $N$.

The recurrence for the right-hand side summand $\R_{m,n}(N,q)$ is given by 
\begin{align}
\nonumber\R_{m,n}(N,q) = &\R_{m,n}(N-1,q) + xq^N \R_{m-1,n}(N-k-1,q) + x^2q^{2N-k}\R_{m,n-2}(N-2k,q)\\\label{UG_G_recurrence} &+ \delta_{m,0}\,\delta_{n,0}\,\delta_{N,l-k} + xq^l\delta_{m,0}\,\delta_{n,1}\,\delta_{N,l}.
\end{align} The correction term is once again due to the recurrence of $q$-binomial coefficients \eqref{Binom_rec} failing when both the top and the bottom arguments are 0. One can easily prove this recurrence by employing the recurrence \eqref{Binom_rec} twice and simplifying terms. We apply the recurrence to the second $q$-binomial with the bottom argument $\lfloor n/2 \rfloor$, and we follow that up with applying the recurrence once again for $m$ to the $q$-binomial product with the bottom arguments $m$ and $n$. 

Summing over $m$ and $n$ from 0 to infinity one sees that ${}_r\S(N,q)$ satisfies the recurrence\begin{equation}\label{UG_R_recurrence}
{}_r\S(N,q) = {}_r\S(N-1,q) + xq^N {}_r\S(N-k-1,q) + x^2q^{2N-k}{}_r\S(N-2k,q)+\delta_{N,l-k}+xq^{N}\delta_{N,l}.
\end{equation} This recurrence with the initial terms \begin{equation}\label{UG_R_initial_conds}
{}_r\S(i,q) = 0, \text{ if   }  i < l-k,
\end{equation} uniquely defines the ${}_r\S(N,q)$ for all integers $N$.

Observe that the recurrences \eqref{UG_L_iterated recurrence} and \eqref{UG_R_recurrence} and their respective initial conditions \eqref{UG_L_initial_conds} and \eqref{UG_R_initial_conds} are identical. This finishes the proof of \eqref{UG_polynomial_equality}.
\end{proof}

\section{G\"ollnitz--Gordon partitions related double sum generating functions and their finitizations}\label{Sec_GG}

We start with the celebrated partition theorem of G\"ollnitz--Gordon.

\begin{theorem}[G\"ollnitz--Gordon Partition Theorems]\label{GG_Thm} For $i=1,2$, the number of partitions of $n$ into parts $\geq 2i-1$ with gaps between parts $\geq 2$ and no consecutive even parts, is the same as the number of partitions into parts congruent to $2+(-1)^i,4,6-(-1)^i$ modulo 8.
\end{theorem}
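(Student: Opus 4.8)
The plan is to establish the G\"ollnitz--Gordon theorems as the $q$-series counterparts of the analytic identities whose combinatorial side has already been prepared in Section~\ref{Sec_Uniform_gap}. For $i=1,2$ the gap side describes partitions into parts $\geq 2i-1$, with all gaps between consecutive parts $\geq 2$, and with the additional restriction that two consecutive parts cannot both be even. First I would translate this gap description into a generating function. The base gap condition ``parts $\geq 2i-1$ with consecutive gaps $\geq 2$'' is precisely the $\UG_{2,2i-1}$ condition of \eqref{Def_UG_set}, so the reduced single-sum generating function comes from \eqref{Singlesum_UF_GF_inf} with $(k,l)=(2,2i-1)$. The extra ``no two consecutive even parts'' clause is what upgrades the gap $2$ to an effective gap of $2$-or-$4$ depending on parity, and this is exactly what converts the ordinary Rogers--Ramanujan-type exponent into the G\"ollnitz--Gordon exponent. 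Concretely, I would show that the generating function for these partitions into exactly $j$ parts is
\begin{equation*}
\frac{q^{j^2+(2i-2)j}}{(q^2;q^2)_j},
\end{equation*}
so that summing over $j\geq 0$ gives $\sum_{j\geq0} q^{j^2+(2i-2)j}/(q^2;q^2)_j$.

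The second step is to identify the product side. By the combinatorial ``translate to a congruence class'' paradigm already illustrated for Euler's and the Rogers--Ramanujan theorems (Theorems~\ref{Euler_Ptt_THM_Combin} and \ref{RR_THM_Combin}), the partitions into parts $\equiv 2+(-1)^i,\,4,\,6-(-1)^i \imod{8}$ have generating function
\begin{equation*}
\frac{1}{(q^{2+(-1)^i},q^{4},q^{6-(-1)^i};q^{8})_\infty},
\end{equation*}
which for $i=1$ is $1/(q,q^4,q^7;q^8)_\infty$ and for $i=2$ is $1/(q^3,q^4,q^5;q^8)_\infty$. The heart of the matter is then the analytic G\"ollnitz--Gordon identity
\begin{equation*}
\sum_{j\geq0}\frac{q^{j^2+(2i-2)j}}{(q^2;q^2)_j}=\frac{1}{(q^{2+(-1)^i},q^{4},q^{6-(-1)^i};q^{8})_\infty},
\end{equation*}
which I would invoke as the known Rogers--Ramanujan--Slater analogue (it is the classical companion pair to \eqref{RR_analytic}, obtainable by specializing a Bailey-pair/Rogers--Selberg argument, or cited directly). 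Matching the left side with the gap-count of the first step and the right side with the congruence-count gives the equality of the two partition-counting functions, and extracting coefficients of $q^n$ proves the theorem.

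The main obstacle is the first step: rigorously showing that imposing ``gaps $\geq 2$ and no two consecutive even parts'' yields the generating function with the quartic base $(q^2;q^2)_j$ rather than $(q;q)_j$. The clean way is a direct bijective normalization: given such a partition with $j$ parts, subtract the minimal admissible staircase (the smallest partition satisfying the conditions with $j$ parts, whose norm is $j^2+(2i-2)j$) and argue that the residual shifts are governed by a partition into at most $j$ parts \emph{counted in steps of $2$}, because a unit shift of any single part would either violate the gap-$2$ condition or create a forbidden consecutive even pair. This parity bookkeeping — verifying that every legal forward motion of parts changes the norm by an even amount and that these motions are in bijection with partitions into $\leq j$ parts with each part doubled — is the delicate combinatorial point; once it is in place the algebraic identification in steps two and three is routine. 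An appealing alternative that sidesteps some of this bookkeeping is to feed $(k,l)=(2,2i-1)$ into the double-sum machinery of Theorem~\ref{Unif_Gap_THM_inf} and track how the no-consecutive-even-parts condition prunes the descendants, but for the purpose of proving the stated theorem the single-sum route above is the most economical.
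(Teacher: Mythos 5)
There is a genuine gap, and it sits exactly at the step you yourself flagged as the heart of the matter. Your claimed generating function $q^{j^2+(2i-2)j}/(q^2;q^2)_j$ for G\"ollnitz--Gordon partitions with exactly $j$ parts is false, and so is the analytic identity you invoke. Test $j=1$, $i=1$: the condition forbids two \emph{consecutive} even parts, not isolated even parts, so a single part may be any positive integer and the generating function is $q/(1-q)$, whereas your formula gives $q/(1-q^2)$. Equivalently, your identity already fails at $q^2$: your left side has coefficient $0$, while $1/(q,q^4,q^7;q^8)_\infty$ has coefficient $1$ (from $1+1$), matching the legal gap-condition partition $(2)$ that your sum misses. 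The correct sum side, from Slater's list and quoted in the paper as \eqref{GG1_orig_EQN}, carries the extra factor $(-q;q^2)_n$, namely $\sum_{n\geq 0} q^{n^2+(1+(-1)^i)n}(-q;q^2)_n/(q^2;q^2)_n$; that factor is precisely what records which parts of the staircase get shifted by an odd amount to become even. Your parity bookkeeping claim --- that a unit shift of a single part always violates the gap-$2$ condition or creates a consecutive even pair --- is where the error enters: moving a part from odd to even by one unit is legal whenever that part is not at distance $2$ from another even part, so not every legal forward motion changes the norm by an even amount, and the residual shifts are \emph{not} in bijection with partitions counted in steps of $2$.

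This asymmetry is exactly what the paper's Section~\ref{Sec_GG} construction encodes, and it is why no single sum over base $q^2$ appears anywhere in the paper: starting from the minimal configuration \eqref{GG1_min_config}, the $m$ singletons move in unit steps and contribute $(q;q)_m^{-1}$, while only the pairs of consecutive odds split from the initial chain move in steps of $4$ (via \eqref{GG_pair_motion}--\eqref{GG_pair_odd_crossover}) and contribute $(q^4;q^4)_{\lfloor n/2\rfloor}^{-1}$, yielding the double sums of Theorem~\ref{GG_Ali_THM} and Kur\c{s}ung\"oz's Theorem~\ref{Kagan_GG_THM} rather than your single sum. Note also that the paper offers no independent proof of Theorem~\ref{GG_Thm}; it rests the theorem on Slater's identities \eqref{GG1_orig_EQN}, so your overall strategy (gap side as a $q$-series, product side as the congruence count, then cite the analytic identity and compare coefficients) is sound in outline and matches the paper in spirit. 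Your steps two and three are fine as stated; to repair the proof, replace your sum side with the correct one from \eqref{GG1_orig_EQN} together with its standard combinatorial interpretation (the base staircase $(2i-1,2i+1,\dots,2i+2j-3)$, even increments generating $(q^2;q^2)_j^{-1}$, and the factor $(-q;q^2)_j$ governing the at-most-one odd increment per part), or run the minimal-configuration argument of Section~\ref{Sec_GG}.
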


G\"ollnitz and Gordon independently discovered Theorem~\ref{GG_Thm} through the identities of Slater \cite{Slater_List}, where the identities 
\begin{align}
\label{GG1_orig_EQN}\sum_{n\geq0} \frac{q^{n^2+(1+(-1)^i)n}(-q;q^2)_n}{(q^2;q^2)_n} &= \frac{1}{(q^{2+(-1)^i},q^4,q^{6-(-1)^i};q^8)_\infty},
\end{align} for $i=1$ and 2 are presented. Let $\GG_i$, for $i=1,2$, denote the sets of partitions of Theorems~\ref{GG_Thm} with the gap conditions.  The left-hand side of \eqref{GG1_orig_EQN} is the generating functions for the sets $\GG_1$ and $\GG_2$, respective to $i=1$ and 2. More precisely, one can include the number of parts as the exponent of $x$ and write
\begin{align*}
\sum_{\pi\in\GG_i} x^{\# (\pi)} q^{|\pi|} = \sum_{n\geq0} \frac{x^n q^{n^2+(1+(-1)^i)n}(-q;q^2)_n}{(q^2;q^2)_n} 
\end{align*} for $i=1,2$.

In the recent years, Andrews--Bringmann--Mahlburg \cite{Andrews_Mahlburg_Bringmann} and Kur\c{s}ung\"oz \cite{Kagan1} have given new double sum representation for the generating function of partitions from the sets $\GG_i$. We would like to write down Kur\c{s}ung\"oz's theorem here.

\begin{theorem}[Kur\c{s}ung\"oz \cite{Kagan1}]\label{Kagan_GG_THM} Let 
\begin{align}
\label{GG_Kagan_size}K(m,n):&= \frac{3m^2-m}{2} + 4mn +4n^2,\\
\intertext{then we have} 
\label{GG1_Kagan_EQN} \sum_{\pi\in\GG_1} x^{\# (\pi)} q^{|\pi|} &= \sum_{m,n\geq 0} \frac{x^{m+2n}q^{K(m,n)}}{(q;q)_m(q^4;q^4)_n},\\
\label{GG2_Kagan_EQN} \sum_{\pi\in\GG_2} x^{\# (\pi)} q^{|\pi|} &= \sum_{m,n\geq 0} \frac{x^{m+2n}q^{K(m,n)+ 2m + 4n}}{(q;q)_m(q^4;q^4)_n}.
\end{align}
\end{theorem}

It is clear that the double sums on the right-hand sides of \eqref{GG1_Kagan_EQN} and \eqref{GG2_Kagan_EQN} are made out of objects with manifestly non-negative power series coefficients. Therefore, this representation is different than the ones of Andrews et.al. \cite{Andrews_Mahlburg_Bringmann}, where sign alterations are present in the sums. 

Kur\c{s}ung\"oz constructed these generating functions similar to the generating functions in the Section~\ref{Sec_Uniform_gap}. For completion, we will also construct generating functions for the sets $\GG_i$ briefly here before putting a restriction on the size of parts. The construction will follow the same steps as in Section~\ref{Sec_Uniform_gap}. The minimal configurations used in this construction is slightly different than Kur\c{s}ung\"oz's approach. This difference leads to different generating functions and hence new double sum identities.

For partitions in $\GG_1$, we can start with minimal configurations \begin{equation}\label{GG1_min_config}
{}_{\GG_1}\pi_{m,n} :=(\underbrace{1,3,5,\dots,2n-1}, 2n+2 ,2n+5, 2n+8\dots,2n-1 + 3m ),
\end{equation} where we have an initial chain of $n$ consecutive odd numbers followed up with $m$ singletons that have a gap of 3 with each adjacent part. Observe that the size of the minimal configuration ${}_{\GG_1}\pi_{m,n}$ is \[G (m,n):= \frac{3m^2+m}{2}+ 2mn+n^2\] as defined before in \eqref{GG1_min_config_size}. Notice that the size of these minimal configurations $G(m,n)$ is different from the $q$ factor, $K(m,n)$, used in Theorem~\ref{Kagan_GG_THM}.

The forward motion of the singletons are the same as in Section~\ref{Sec_Uniform_gap}. The splitting of two elements from the initial chain and the forward motions of the created pair afterwards can be defined as follows. Since partitions from $\GG_1$ do not allow consecutive evens, one needs to move a pair of consecutive odds forwards to the next pair of consecutive odds: \begin{equation}\label{GG_pair_motion}\underline{2k+1,2k+3} \mapsto \underline{2k+3,2k+5},\end{equation} for any $k\in\Z_{\geq 0}$. This forwards motion adds $4$ to the overall size of the partition. Moreover, the splitting pairs from the initial chain of ${}_{\GG_1}\pi_{m,n}$ is analogous to the case of \eqref{initial_chain_split}. We split pairs of consecutive odd integers from the initial chain as follows: \begin{equation}
\label{GG_pair_split} \underbrace{1,3,5,\dots,2n-1}\mapsto \underbrace{1,3,5,\dots,2n-5},\underline{2n-1,2n+1}.
\end{equation} This motion also adds 4 to the overall size of the partition. 

The motions \eqref{GG_pair_motion} and \eqref{GG_pair_split} are defined assuming that the outcomes of these motions do not violate the difference conditions of G\"ollnitz--Gordon Partition Theorems. In general a singleton might be close to a pair and the free forwards motion might not be possible. For these cases, once again, we need to address the pairs crossing over the singletons in the forward motion. We will do it in two cases; when the singleton has an even value and when it has an odd value. Before any crossover, the minimal gap between the greater value of a pair and the even singleton is 3. For some non-negative integer $k$, the crossing of a pair over an even singleton is defined by \begin{align}
\label{GG_pair_even_crossover} \underline{2k+1,2k+3}, 2k+6 &\mapsto 2k+2,\underline{2k+5,2k+7}.
\intertext{Similarly, for the odd singleton case the minimal gap between the greater value of a pair and the singleton is 2, and the crossing over can be defined as}
\label{GG_pair_odd_crossover} \underline{2k+1,2k+3}, 2k+5 &\mapsto 2k+1,\underline{2k+5,2k+7}.
\end{align} It is clear that these forward motions are reversible. Moreover, we would like to point out that, just as in the case of the all the forward motion of the pairs, in the crossovers the size of the partition raises by 4. 

To emphasize that these motions are reversible, we would like to give an example. Let \[\pi = (2,5,7,9,13,17,19,22) \in \GG_1.\] We first check to see if there is an initial chain (consecutive odd numbers starting from 1). In this case, initial chain is not present. Then we move on to the identification of the pairs. Starting from the smallest part (that is larger than the end of the initial chain) we group each two consecutive odd parts other and underline these pairs without using a part in more than one pair: \[\pi = (2,\underline{5,7},9,13,\underline{17,19},22).\] This is enough to identify the minimal configuration this partition descends from. The total number of elements in the initial chain and the pairs is $n$ and the number of underlined elements is $m$. In particular, in this example $m=n=4$, and $\pi$ is a descendant of the minimal configuration \[{}_{\GG_1}\pi_{4,4} = (\underbrace{1,3,5,7},10,13,16,19).\] To revert $\pi$ back to ${}_{\GG_1}\pi_{4,4}$, we need to first need to move the pairs back to their original positions. We do this starting from the smallest pair by using the motion rules \eqref{GG_pair_motion}, \eqref{GG_pair_split}, \eqref{GG_pair_even_crossover}, and \eqref{GG_pair_odd_crossover} in reverse. For the pair $\underline{5,7}$ of $\pi$, we just need to use \eqref{GG_pair_even_crossover} in reverse once. After this motion, this pair cannot move any further back and it becomes the initial chain: \[(\underbrace{1,3},6,9,13,\underline{17,19},22).\] After this, we need to apply \eqref{GG_pair_odd_crossover} in reverse twice followed by \eqref{GG_pair_even_crossover} in reverse to get \[(\underbrace{1,3,5,7},10,13,17,22).\] Then, we can reach ${}_{\GG_1}\pi_{4,4}$ by moving the singletons back to their original positions.

Finally, given a minimal configuration ${}_{\GG_1}\pi_{m,n}$ only $\lfloor n/2 \rfloor$ pairs can be split from the initial chain of $n$ consecutive odd integers, and the related generating function for the motion of these pairs is $(q^4;q^4)^{-1}_{\lfloor n/2\rfloor},$ since any forward motion of these pairs adds $4$ to the total size of the partition ${}_{\GG_1}\pi_{m,n}$. Once again note that all these motions are bijective and any given partition from $\GG_1$ can be traced back to its minimal configuration by using the motions in the reverse order. Hence, we get \begin{equation*}\sum_{\pi\in\GG_1}x^{\#(\pi)}q^{|\pi|} = \sum_{m,n\geq 0} \frac{x^{m+n}q^{G(m,n)}}{(q;q)_m(q^4;q^4)_{\lfloor n/2\rfloor}}.\end{equation*}

Similarly, we can start with the minimal configuration \begin{align}
\label{GG2_min_config1} {}_{\GG_2}\pi_{m,n} & :=(\underbrace{3,5,\dots,2n+1}, 2n+4 ,2n+7, \dots,2n+1 + 3m ).
\end{align} We use the same forward motion rules of $\GG_1$ case. The sizes of the minimal configuration \eqref{GG2_min_config1} is $G(m,n)+2m+2n.$ Hence, we have the following theorem.

\begin{theorem}\label{GG_Ali_THM}We have \begin{align} 
\label{GG1_Ali_EQN}\sum_{\pi\in\GG_1}x^{\#(\pi)}q^{|\pi|} &= \sum_{m,n\geq 0} \frac{x^{m+n}q^{G(m,n)}}{(q;q)_m(q^4;q^4)_{\lfloor n/2\rfloor}},\\
\label{GG2_Ali_EQN}\sum_{\pi\in\GG_2}x^{\#(\pi)}q^{|\pi|} &= \sum_{m,n\geq 0} \frac{x^{m+n}q^{G(m,n)+2m+2n}}{(q;q)_m(q^4;q^4)_{\lfloor n/2\rfloor}}, 
\end{align} where $G(m,n)$ is defined as in \eqref{GG1_min_config_size}.
\end{theorem}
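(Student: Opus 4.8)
The plan is to prove both identities by the same bijective construction already outlined for the uniform-gap case in Section~\ref{Sec_Uniform_gap}, now adapted to the Göllnitz--Gordon gap conditions. For a fixed pair $(m,n)\in\Z_{\geq0}^2$ I would take the minimal configuration ${}_{\GG_1}\pi_{m,n}$ of \eqref{GG1_min_config} as the seed and show that the rules of motion---the forward motion of singletons, the splitting of pairs of consecutive odds from the initial chain, the free pair motion \eqref{GG_pair_motion}, and the two crossover rules \eqref{GG_pair_even_crossover} and \eqref{GG_pair_odd_crossover}---generate, bijectively and without repetition, exactly those partitions in $\GG_1$ whose decomposition has an initial chain of length $n$ and $m$ singletons. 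Summing the resulting generating functions over all $(m,n)$ then yields the right-hand side of \eqref{GG1_Ali_EQN}.

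The generating-function bookkeeping for a fixed $(m,n)$ factors into three independent pieces, exactly as in \eqref{singleton_move_GF} and \eqref{pair_movement_GF}. First, the seed contributes $x^{m+n}q^{G(m,n)}$, where one checks that the norm of ${}_{\GG_1}\pi_{m,n}$ equals $G(m,n)$ of \eqref{GG1_min_config_norm} and that it carries $m+n$ parts. Second, since none of the motions changes the number of parts, $x$ continues to track $\#(\pi)$ throughout, and the free forward motion of the $m$ singletons is in bijection with partitions into $\leq m$ parts, contributing $1/(q;q)_m$. Third, at most $\lfloor n/2\rfloor$ pairs can be split from an initial chain of $n$ consecutive odds, each pair motion or crossover raises the norm by exactly $4$, and---after ordering the pair displacements by decreasing length so that no pair ever overtakes another pair---these motions are in bijection with partitions into $\leq\lfloor n/2\rfloor$ parts under $q\mapsto q^4$, contributing $1/(q^4;q^4)_{\lfloor n/2\rfloor}$.

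I expect the genuine work to lie in verifying that this construction is a bijection onto all of $\GG_1$, that is, surjectivity together with uniqueness of the seed. Injectivity is clear because every individual move is reversible; the delicate direction is showing that, starting from an arbitrary $\pi\in\GG_1$, running the moves backwards always terminates at a well-defined minimal configuration, so that the pair $(m,n)$ and the split into chain, pairs, and singletons are unambiguous. The two crossover rules must be checked to preserve both the gap-$\geq2$ condition and the prohibition on consecutive even parts, and the even-singleton versus odd-singleton cases \eqref{GG_pair_even_crossover}--\eqref{GG_pair_odd_crossover} must together cover every way a moving pair can meet a singleton. Establishing that the backward procedure terminates at this unique seed, independently of the order in which the moves are undone, is the crux of the proof.

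For $\GG_2$ the argument is identical once the seed is replaced by the shifted minimal configuration ${}_{\GG_2}\pi_{m,n}$ of \eqref{GG2_min_config1}, whose initial chain starts at $3$ rather than $1$; the same motion rules apply verbatim, the three denominator factors are unchanged, and only the seed norm shifts to $G(m,n)+2m+2n$, giving \eqref{GG2_Ali_EQN}. As an independent check one could instead derive both identities analytically by matching the double sums against the Slater products \eqref{GG1_orig_EQN}, or against Kur\c{s}ung\"oz's representation in Theorem~\ref{Kagan_GG_THM}; I would note, however, that the latter comparison is not a plain even--odd reindexing of $n$, since the two families of summands genuinely differ, and so it would require a $q$-series transformation rather than a term-by-term identification.
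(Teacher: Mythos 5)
Your proposal follows essentially the same argument as the paper: the paper proves Theorem~\ref{GG_Ali_THM} by exactly this construction, seeding with ${}_{\GG_1}\pi_{m,n}$ and ${}_{\GG_2}\pi_{m,n}$, using the singleton motion of Section~\ref{Sec_Uniform_gap} together with the pair splitting, the free motion \eqref{GG_pair_motion}, and the crossovers \eqref{GG_pair_even_crossover}--\eqref{GG_pair_odd_crossover} (each adding $4$ to the norm, hence the factor $(q^4;q^4)^{-1}_{\lfloor n/2\rfloor}$), and invoking reversibility of the motions to trace each partition back to a unique minimal configuration. Your closing caution about Theorem~\ref{Kagan_GG_THM} is also consistent with the paper, which treats the two double sums as genuinely distinct representations rather than an even--odd reindexing.
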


Note that \eqref{GG_Kagan_size} with \eqref{GG1_min_config_size} are different from each other. Although the $q$-Pochhammer bases are the same, the generating function representations \eqref{GG1_Kagan_EQN} and \eqref{GG1_Ali_EQN}, and \eqref{GG1_Kagan_EQN} and \eqref{GG2_Ali_EQN} are distinct from each other, respectively. This leads to the following series transformation formulas after we do a even-odd split for the variable $n$ in \eqref{GG_Ali_THM}.

\begin{theorem}\label{GG_Ali_Kagan_Together}
\begin{align}
\sum_{m,n\geq 0} \frac{x^{m+2n}q^{\frac{3m^2-m}{2}+4mn+4n^2}}{(q;q)_m(q^4;q^4)_n} &= \sum_{m,n\geq 0} \frac{x^{m+2n}q^{\frac{3m^2-m}{2}+4mn+4n^2+m} (1+xq^{2m+4n+1})}{(q;q)_m(q^4;q^4)_n},\\
\sum_{m,n\geq 0} \frac{x^{m+2n}q^{\frac{3m^2-m}{2}+4mn+4n^2+2m+4n} }{(q;q)_m(q^4;q^4)_n} &= \sum_{m,n\geq 0} \frac{x^{m+2n}q^{\frac{3m^2-m}{2}+4mn+4n^2+3m+4n} (1+xq^{2m+4n+3})}{(q;q)_m(q^4;q^4)_n}.
\end{align}
\end{theorem}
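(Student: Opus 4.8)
The plan is to read both sides of each identity as closed forms for the same generating function $\sum_{\pi\in\GG_i}x^{\#(\pi)}q^{|\pi|}$, and to produce the right-hand sides from the floor representations of Theorem~\ref{GG_Ali_THM} by an even--odd split of the summation index $n$. The left-hand side of the first identity is exactly Kur\c{s}ung\"oz's double sum \eqref{GG1_Kagan_EQN}, since $K(m,n)=\frac{3m^2-m}{2}+4mn+4n^2$; likewise the left-hand side of the second identity is \eqref{GG2_Kagan_EQN}. By Theorem~\ref{GG_Ali_THM} the same two generating functions are given by \eqref{GG1_Ali_EQN} and \eqref{GG2_Ali_EQN}, which carry the floor $\lfloor n/2\rfloor$. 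Thus the whole content of the statement is the claim that removing that floor, by separating even and odd $n$, reproduces the displayed floorless double sums.

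First I would carry out the split on \eqref{GG1_Ali_EQN}. Writing $n=2N$ gives $\lfloor n/2\rfloor=N$ and $G(m,2N)=\frac{3m^2+m}{2}+4mN+4N^2=K(m,N)+m$, so the even part is $\sum_{m,N\geq 0}x^{m+2N}q^{K(m,N)+m}/\bigl((q;q)_m(q^4;q^4)_N\bigr)$, which already matches the base $q$-power $\frac{3m^2-m}{2}+4mn+4n^2+m$ of the stated right-hand side. Writing $n=2N+1$ gives $\lfloor n/2\rfloor=N$ again and $G(m,2N+1)=G(m,2N)+2m+4N+1$, together with one extra power of $x$; factoring the even part out of the odd part then collapses the two pieces into a single combining factor. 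The analogous computation on \eqref{GG2_Ali_EQN}, whose minimal norm carries the extra shift $2m+2n$, produces the base power $\frac{3m^2-m}{2}+4mn+4n^2+3m+4n$ and its own combining factor.

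The main obstacle is reconciling the exponent inside that combining factor with the one displayed in the statement. The split delivers $(1+xq^{\,2m+4N+1})$ in the first identity and $(1+xq^{\,2m+4N+3})$ in the second, whereas the statement as written records $(1+xq^{m+4n+1})$ and $(1+xq^{2m+8n+3})$. To decide which is correct I would compare a single low-order coefficient. Expanding the left-hand side of the first identity, the coefficient of $x^2q^4$ equals $1$ (it comes only from $(m,n)=(0,1)$, since $(2,0)$ contributes $q^5/(q;q)_2$), whereas the right-hand side as displayed, with factor $(1+xq^{m+4n+1})$, picks up an additional $q^4/(q;q)_1$ from the $(m,n)=(1,0)$ term and so has coefficient $2$. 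This forces the displayed factor to read $(1+xq^{2m+4n+1})$ instead (and, by the same computation, $(1+xq^{2m+4n+3})$ in the second identity) for the equality to hold — precisely the factor the even--odd split yields.

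To make the argument airtight rather than coefficient-by-coefficient, I would finish by mimicking the proof of Theorem~\ref{UG_polynomial_equality_THM}: set up the recurrences in $N$ satisfied by the floor representation and by the floorless double sum carrying the factor $(1+xq^{2m+4n+1})$ (respectively $(1+xq^{2m+4n+3})$), verify that they coincide together with their initial data, and thereby confirm the identities in the form the split produces.
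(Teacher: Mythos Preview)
Your approach is exactly the paper's: the identities are obtained by performing the even--odd split of $n$ in Theorem~\ref{GG_Ali_THM} and comparing with Kur\c{s}ung\"oz's Theorem~\ref{Kagan_GG_THM}. You are also right that the combining factors in the statement should read $(1+xq^{2m+4n+1})$ and $(1+xq^{2m+4n+3})$---your coefficient check at $x^2q^4$ confirms this---so the final recurrence verification you propose is unnecessary (the paper's alternative direct check, comparing coefficients of each fixed power $x^k$ and cancelling, is simpler and suffices).
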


One can also prove these identities relying on the exponent of the variable $x$.  Fixing the exponent of $x$ to some $k$ one can write \[\sum_{k\geq 0} x^k q^{\frac{3k^2-k}{2}}  \sum_{n\geq 0} \alpha_{k,n}(q) = \sum_{k\geq 0} x^k q^{\frac{3k^2-k}{2}} \sum_{n\geq 0} \left(\beta_{k,n}(q) +  \gamma_{k,n}(q)\right),\] for some sequences $\alpha_{k,n}(q)$, $\beta_{k,n}(q)$, and $\gamma_{k,n}(q)$. Then in these cases one can easily show that \[\alpha_{k,n}(q) = \beta_{k,n}(q)+\gamma_{k,n}(q)\] using only elementary cancellations after carrying $\beta_{k,n}(q)$ or $\gamma_{k,n}(q)$ on the other side of the claimed identity.

Moreover, both Theorem~\ref{Kagan_GG_THM} and Theorem~\ref{GG_Ali_THM} are found using a constructive combinatorial structure. This structure can be utilized to find finite analogs of these generating functions. It is clear that putting a bound on the exponent of $x$ in either theorem would bound the number of parts of the partitions. Our plan is to put a bound on the largest part of partitions in the spirit of \cite{BerkovichUncu7}. Since the minimal configuration and the forward motions are explicitly stated here, we would like to start with the finitization of the double sums of Theorem~\ref{GG_Ali_THM}. 

Let $N$ be a non-negative integer. For the first G\"ollnitz--Gordon type partitions we start by looking at the largest singleton of the minimal configuration ${}_{\GG_1}\pi_{m,n}$, $3m+2n-1$. This part can only move $N-3m-2n+1$ steps forward. Analogous to the finitization in Section~\ref{Sec_Uniform_gap} this shows that the related generating function for this bounded motion of $m$ singletons is \begin{equation}\label{GG1_singleton_Bounded_move}
{N-3m-2n+1 +m \brack m}_q.
\end{equation} The pairs of ${}_{\GG_1}\pi_{m,n}$ move in steps of two when crossing a singleton is out of the picture. Hence, without the consideration of the singletons the largest pair $\underline{2n-3,2n-1}$ to split from the initial chain of ${}_{\GG_1}\pi_{m,n}$ can move $\lfloor(N-(2n-1))/2\rfloor$ times before it reaches the boundary. The motions \eqref{GG_pair_even_crossover} and \eqref{GG_pair_odd_crossover} shows that a pair darts forward 4 steps instead of 2 when it crosses over a singleton. Hence, a pair loses one possible move every time it crosses over one of the $m$ singletons. This way we see that the largest pair to be $\underline{2n-3,2n-1}$ can move a total of $\lfloor(N-(2n-1))/2\rfloor-m$ times. Also recall that any motion of the pairs add 4 to the total size of the partitions. Hence the motion of the pairs with the bound of the largest part $N$ is \begin{equation}\label{GG1_pair_Bounded_move}
{\lfloor\frac{N-(2n-1)}{2}\rfloor-m+\lfloor \frac{n}{2}\rfloor \brack \lfloor\frac{n}{2}\rfloor}_{q^4}.
\end{equation}

Similarly for the second G\"ollnitz--Gordon partitions related generating function we get the related $q$-binomial coefficients
\begin{equation}\label{GG2_Bounded_moves}
{N-3m-2n-1 +m \brack m}_q\text{ and }{\lfloor\frac{N-(2n+1)}{2}\rfloor-m+\lfloor \frac{n}{2}\rfloor \brack \lfloor\frac{n}{2}\rfloor}_{q^4}.
\end{equation} 

Replacing the $q$-Pochhammer symbols with the related $q$-binomials \eqref{GG1_singleton_Bounded_move}, \eqref{GG1_pair_Bounded_move}, and \eqref{GG2_Bounded_moves} after making slight simplifications in the floor functions in the related identities we get the following theorem.

\begin{theorem}\label{GG_Ali_BDD_THM}Let $G(m,n)$ be as in \eqref{GG1_min_config_size} and $N$ be a non-negative integer. We have  \begin{align} 
\label{GG1_Ali_BDD_EQN}\sum_{\pi\in\GG_{1,N}}x^{\#(\pi)}q^{|\pi|} &= \sum_{m,n\geq 0}x^{m+n}q^{G(m,n)}{N-2m-2n+1 \brack m}_q{\lfloor\frac{N+1}{2}\rfloor-m-n+\lfloor \frac{n}{2}\rfloor \brack \lfloor\frac{n}{2}\rfloor}_{q^4},\\
\label{GG2_Ali_BDD_EQN}\sum_{\pi\in\GG_{2,N}}x^{\#(\pi)}q^{|\pi|} &= \sum_{m,n\geq 0} x^{m+n}q^{G(m,n)+2m+2n}{N-2m-2n-1  \brack m}_q{\lfloor\frac{N-1}{2}\rfloor-m-n+\lfloor \frac{n}{2}\rfloor \brack \lfloor\frac{n}{2}\rfloor}_{q^4} ,
\end{align} where $\GG_{i,N}$ is the set of partitions from $\GG_i$ with the extra bound $N$ on the largest part of the partitions. 
\end{theorem}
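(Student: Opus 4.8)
The plan is to run exactly the same bijective construction that produced the unbounded identities of Theorem~\ref{GG_Ali_THM}, but now to record how far each rule of motion may be applied before the largest part of the partition exceeds the bound $N$; the bounded counts will replace the $q$-Pochhammer symbols by $q$-binomial coefficients, precisely as the bounded uniform-gap construction did in passing from Theorem~\ref{Unif_Gap_THM_inf} to Theorem~\ref{Unif_Gap_THM_Bounded}. Fix $(m,n)\in\Z^2_{\geq0}$ and start from the minimal configuration ${}_{\GG_1}\pi_{m,n}$ of \eqref{GG1_min_config}, whose largest part is the top singleton $3m+2n-1$ and whose norm is $G(m,n)$. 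The forward motion of the $m$ singletons is, as in Section~\ref{Sec_Uniform_gap}, in bijection with a nondecreasing sequence $(r_1,\dots,r_m)$; the only new feature is that the top singleton may advance at most $N-(3m+2n-1)$ steps before reaching $N$, so the singleton motions are confined to an $m\times\big(N-(3m+2n-1)\big)$ box and contribute the generating function \eqref{GG1_singleton_Bounded_move}.

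The heart of the argument is the corresponding count for the pairs. First I would observe that a free pair move \eqref{GG_pair_motion} advances the larger element of a pair by $2$, whereas a crossover \eqref{GG_pair_even_crossover}, \eqref{GG_pair_odd_crossover} advances it by $4$, both adding $4$ to the norm. Thus, ignoring singletons, the top pair $\underline{2n-3,2n-1}$ split from the initial chain has $\lfloor(N-(2n-1))/2\rfloor$ free advances available; since in its maximal advance this pair crosses each of the $m$ singletons and every such crossover consumes the room of one extra free advance, the true number of admissible moves of the top pair is $\lfloor(N-(2n-1))/2\rfloor-m$. As in the unbounded case the $\lfloor n/2\rfloor$ pairs are moved longest-first, so their joint motion is a partition into $\le\lfloor n/2\rfloor$ parts each $\le\lfloor(N-(2n-1))/2\rfloor-m$, contributing \eqref{GG1_pair_Bounded_move} in base $q^4$. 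Multiplying the two box generating functions by $x^{m+n}q^{G(m,n)}$ and summing over $(m,n)$ yields the right-hand side of \eqref{GG1_Ali_BDD_EQN} once I simplify $\lfloor(N-2n+1)/2\rfloor=\lfloor(N+1)/2\rfloor-n$. The second identity \eqref{GG2_Ali_BDD_EQN} follows verbatim from the minimal configuration ${}_{\GG_2}\pi_{m,n}$ of \eqref{GG2_min_config1}, whose top singleton is $3m+2n+1$ and whose top chain part is $2n+1$, giving the binomials \eqref{GG2_Bounded_moves} and, after $\lfloor(N-2n-1)/2\rfloor=\lfloor(N-1)/2\rfloor-n$, the stated form.

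The main obstacle I anticipate is justifying the pair count cleanly, namely that subtracting exactly $m$ from $\lfloor(N-(2n-1))/2\rfloor$ is correct: one must check that the top pair genuinely passes all $m$ singletons (they all lie above the initial chain in the minimal configuration), that each crossover costs precisely one free move and never disturbs the longest-first ordering of the pairs, and that the resulting set of admissible move-sequences is in bijection with the partitions in the asserted box. A secondary point to verify is the behaviour at small $N$: I would test the $(m,n)=(0,0)$ term against the empty partition and decide whether a boundary correction, analogous to $\chi(0\le N<l-k)$ in \eqref{UG_bounded_single_sum}, is required. For $\GG_1$ the term $\binom{N+1}{0}_q\binom{\lfloor(N+1)/2\rfloor}{0}_{q^4}$ already registers the empty partition at $N=0$; for $\GG_2$, however, the top argument $N-1$ of the first binomial turns negative at $N=0$, so I would scrutinise that case and either supply a correction term or record the convention on $N$ under which \eqref{GG2_Ali_BDD_EQN} is asserted.
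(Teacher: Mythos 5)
Your proposal is correct and is essentially the paper's own proof verbatim: the paper finitizes the identical minimal-configuration/motion construction, bounding the top singleton $3m+2n-1$ (resp.\ $3m+2n+1$) to get \eqref{GG1_singleton_Bounded_move} and \eqref{GG2_Bounded_moves}, counting the top pair's moves as $\lfloor(N-(2n-1))/2\rfloor-m$ with one move lost per singleton crossover to get \eqref{GG1_pair_Bounded_move}, and then simplifying the floor functions exactly as you do. Your closing boundary check is in fact sharper than the paper, which asserts the theorem for all $N\geq 0$ without noting that \eqref{GG2_Ali_BDD_EQN} misses the empty partition at $N=0$ (there the right-hand side vanishes, since by \eqref{Binom_def} the $(m,n)=(0,0)$ term contains ${N-1 \brack 0}_q=0$ when $N=0$, while the left-hand side equals $1$), so a correction term such as $\delta_{N,0}$ --- analogous to the $\delta_{N,1}\,xq$ appearing in \eqref{lG1_Ali_BDD_EQN} or the $\chi$-term in \eqref{UG_bounded_single_sum} --- or the convention $N\geq 1$ is genuinely needed for the second identity.
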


We use the same techniques on the minimal configurations and the forward motions to get the finite version of Kur\c{s}ung\"oz's Theorem (Theorem~\ref{Kagan_GG_THM}).

\begin{theorem}\label{GG_Kagan_BDD_THM}Let $K(m,n)$ be as in \eqref{GG_Kagan_size} and $N$ be a non-negative integer. We have  \begin{align} 
\label{GG1_Kagan_BDD_EQN}\sum_{\pi\in\GG_{1,N}}x^{\#(\pi)}q^{|\pi|} &= \sum_{m,n\geq 0}x^{m+2n}q^{K(m,n)}{N-2m-4n+2 \brack m}_q{\lfloor\frac{N+1}{2}\rfloor-m-n \brack n}_{q^4},\\
\label{GG2_Kagan_BDD_EQN}\sum_{\pi\in\GG_{2,N}}x^{\#(\pi)}q^{|\pi|} &= \sum_{m,n\geq 0} x^{m+2n}q^{K(m,n)+2m+4n}{N-2m-4n  \brack m}_q{\lfloor\frac{N-1}{2}\rfloor-m-n \brack n}_{q^4} ,
\end{align} where $\GG_{i,N}$ is the set of partitions from $\GG_i$ with the extra bound $N$ on the largest part of the partitions. 
\end{theorem}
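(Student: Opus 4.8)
The plan is to repeat, for Kur\c{s}ung\"oz's combinatorial model behind Theorem~\ref{Kagan_GG_THM}, exactly the bounding argument that produced Theorem~\ref{GG_Ali_BDD_THM}. First I would make the minimal configuration underlying \eqref{GG1_Kagan_EQN} explicit: for $\GG_1$ it consists of the $n$ consecutive-odd pairs $(1,3),(5,7),\dots,(4n-3,4n-1)$ placed below the $m$ singletons $4n+1,4n+4,\dots,4n+3m-2$ (mutual gap $3$, and minimal gap $2$ above the top pair), whose total norm is precisely $K(m,n)$ from \eqref{GG_Kagan_norm}. The bijective rules of motion are those already used in Section~\ref{Sec_GG}: forward motion of the $m$ singletons is in bijection with partitions into $\leq m$ parts, accounting for $(q;q)^{-1}_m$, while the motion of the $n$ pairs together with their crossings \eqref{GG_pair_even_crossover}--\eqref{GG_pair_odd_crossover} over singletons is in bijection with partitions into $\leq n$ parts, accounting for $(q^4;q^4)^{-1}_n$ since each pair step adds $4$ to the norm.

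Next I would impose the bound $N$ on the largest part and measure how far each motion can travel for a fixed $(m,n)$, exactly as in \eqref{GG1_singleton_Bounded_move} and \eqref{GG1_pair_Bounded_move}. The top singleton $4n+3m-2$ advances at most $N-(4n+3m-2)$ steps, confining the singleton motion to an $m\times(N-3m-4n+2)$ box and replacing $(q;q)^{-1}_m$ by ${N-2m-4n+2\brack m}_q$. The largest pair $(4n-3,4n-1)$ advances in steps of $2$ while free, hence at most $\lfloor(N-4n+1)/2\rfloor$ times, but each of the $m$ crossings darts it $4$ rather than $2$ and so costs one admissible step; this leaves $\lfloor(N+1)/2\rfloor-2n-m$ steps and replaces $(q^4;q^4)^{-1}_n$ by ${\lfloor\frac{N+1}{2}\rfloor-m-n\brack n}_{q^4}$. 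Multiplying by $x^{m+2n}q^{K(m,n)}$ and summing over $(m,n)\in\Z^2_{\geq0}$ produces \eqref{GG1_Kagan_BDD_EQN}; because each $\pi\in\GG_{1,N}$ is the unique descendant of a unique minimal configuration all of whose parts stay $\leq N$, this sum equals $\sum_{\pi\in\GG_{1,N}}x^{\#(\pi)}q^{|\pi|}$. For $\GG_2$ I would shift the configuration so its smallest parts are $3$ and $5$, i.e. pairs $(3,5),(7,9),\dots,(4n-1,4n+1)$ with top singleton $4n+3m$; this inserts the factor $q^{2m+4n}$ and shifts the binomial arguments to $N-2m-4n$ and $\lfloor(N-1)/2\rfloor-m-n$, yielding \eqref{GG2_Kagan_BDD_EQN}.

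Finally I would verify the small-$N$ boundary. For $\GG_1$ the term $m=n=0$ equals ${N+2\brack0}_q{\lfloor\frac{N+1}{2}\rfloor\brack0}_{q^4}=1$ for every $N\geq0$, correctly recording the empty partition, and all other terms vanish once $N$ is too small to fit a configuration, so no $\chi$-type correction as in \eqref{UG_bounded_single_sum} is needed; I would check the $\GG_2$ endpoint in the same way, keeping the conventional restriction on $N$ (analogous to the $N\geq l$ convention used for $\UG$) to avoid the degenerate value of ${\lfloor\frac{N-1}{2}\rfloor\brack0}_{q^4}$ at $N=0$.

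The step I expect to be the main obstacle is the crossing bookkeeping for the pairs. One must justify rigorously that crossing each of the $m$ singletons deletes exactly one admissible step from the top pair's itinerary, that moving the pairs in decreasing order of displacement (as in Section~\ref{Sec_Uniform_gap}) still prevents pair--pair collisions under the ceiling $N$, and that the tally collapses to the clean floor $\lfloor(N+1)/2\rfloor$ irrespective of the parity of $N$ and of how singletons and pairs interleave. This parity-sensitive accounting is exactly where the floors in \eqref{GG1_pair_Bounded_move} and \eqref{GG2_Bounded_moves} come from, and pinning down the off-by-one terms is the delicate part; as an independent check one could instead show that the right-hand side of \eqref{GG1_Kagan_BDD_EQN} obeys the same recurrence in $N$ as the generating function of $\GG_{1,N}$, in the spirit of the proof of Theorem~\ref{UG_polynomial_equality_THM}.
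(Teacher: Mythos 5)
Your proposal is correct and is essentially the paper's own proof: the paper disposes of Theorem~\ref{GG_Kagan_BDD_THM} in one line by applying ``the same techniques on the minimal configurations and the forward motions'' as in Theorem~\ref{GG_Ali_BDD_THM}, and your explicit reconstruction — the minimal configurations with pairs $(1,3),(5,7),\dots,(4n-3,4n-1)$ (resp.\ $(3,5),\dots,(4n-1,4n+1)$) below singletons of mutual gap $3$, whose norms are exactly $K(m,n)$ and $K(m,n)+2m+4n$, the $m\times(N-3m-4n+2)$ box for the singletons, and the count $\lfloor(N+1)/2\rfloor-2n-m$ for the top pair with one move lost per crossing — matches both the numerology of \eqref{GG1_Kagan_BDD_EQN}--\eqref{GG2_Kagan_BDD_EQN} and the template spelled out around \eqref{GG1_singleton_Bounded_move}--\eqref{GG2_Bounded_moves}. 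Your flag about the degenerate value ${\lfloor\frac{N-1}{2}\rfloor\brack 0}_{q^4}$ at $N=0$ in the $i=2$ case is a genuine (and easily excused) boundary observation that the paper passes over silently, so restricting to $N\geq 1$ there, as you suggest, is the right reading.
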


The right-hand sides of \eqref{GG1_Ali_BDD_EQN} and \eqref{GG1_Kagan_BDD_EQN} (also \eqref{GG2_Ali_BDD_EQN} and \eqref{GG2_Kagan_BDD_EQN}) are proven to be the same by the combinatorial construction. One can directly prove these equalities using the exponent of $x$ (analogous to the explanation after Theorem~\ref{GG_Ali_Kagan_Together}) and recurrence relation for the $q$-binomial coefficients \eqref{Binom_rec}. Another direct proof can be done by using recurrences similar to Section~\ref{Sec_Uniform_gap}. This standard $q$-series proof is easily followed with the aid of computer implementations Sigma \cite{Sigma} and/or qMultiSum \cite{qMultiSum}. We will use these symbolic computation implementations' help in the proof of Theorem~\ref{Fin_GG_Ali_THM} later in this section.

More importantly, the polynomials presented in Theorem~\ref{GG_Ali_BDD_THM} and \ref{GG_Kagan_BDD_THM} are not the first finitizations for the generating function of the number of partitions with the G\"ollnitz--Gordon gap conditions.  Andrews in \cite{Andrews_q_Tri} finds explicit formulas for the generating function for the number of partitions from the sets $\GG_{i,2N+1}$, for $i=1,2$ using $q$-trinomial coefficients. 

We would like to note that, although we will not be focusing on the $q$-trinomial coefficients in the work presented here, they are crucial for the polynomial identities coming from partition theoretic background. Some resources for the interested reader are as follows. The $q$-trinomial coefficients have been studied extensively by Andrews and Baxter \cite{Andrews_Baxter}. Some refinements of the $q$-trinomial coefficients can be found in Warnaar's work \cite{Warnaar_Refined, Warnaar_T}. Some new results on $q$-trinomial coefficients in connection with the partition theory (especially Capparelli's partition theorem) can be found in the author's work joint with Berkovich \cite{BerkovichUncu7, BerkovichUncu8, BerkovichUncu9}. 

Another finitization of the related generating functions with gap conditions is due to Berkovich--McCoy--Orrick. 
\begin{theorem}[Berkovich, McCoy, Orrick \cite{Berkovich_McCoy_Orrick}]\label{GG_Berkovich_McCoy_Orrick_THM}For any non-negative integer $N$, we have 
\begin{align}
\label{GG1_Berkovich_McCoy_Orrick_oddBDD}\sum_{\pi\in\GG_{1,2N+1}}q^{|\pi|} &= \sum_{m,n\geq 0} q^{m^2+n^2} {N-m+1\brack n}_{q^2}{n\brack m}_{q^2}\\
\label{GG2_Berkovich_McCoy_Orrick_oddBDD}\sum_{\pi\in\GG_{2,2N+1}}q^{|\pi|} &= \sum_{m,n\geq 0} q^{m^2+n^2+ 2n} {N-m\brack n}_{q^2}{n\brack m}_{q^2}.
\end{align}
\end{theorem}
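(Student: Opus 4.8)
The plan is to derive this from the constructive finitization already in hand rather than to re-prove it via the lattice-model/transfer-matrix methods of \cite{Berkovich_McCoy_Orrick}. First I would specialize Theorem~\ref{GG_Ali_BDD_THM}: set $x=1$ in \eqref{GG1_Ali_BDD_EQN} and replace the bound $N$ by $2N+1$. Because $2N+1$ is odd the floor functions collapse cleanly, $\lfloor (2N+2)/2\rfloor=N+1$, so the first $q$-binomial becomes ${2N-2m-2n+2\brack m}_q$ and the second becomes ${N-m-n+1+\lfloor n/2\rfloor\brack \lfloor n/2\rfloor}_{q^4}$. This gives
\[
\sum_{\pi\in\GG_{1,2N+1}}q^{|\pi|}=\sum_{m,n\geq 0}q^{G(m,n)}{2N-2m-2n+2\brack m}_q{N-m-n+1+\lfloor n/2\rfloor\brack \lfloor n/2\rfloor}_{q^4},
\]
whose right-hand side is exactly the left-hand side of \eqref{Fin_GG1_Intro_EQN}. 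The same specialization of \eqref{GG2_Ali_BDD_EQN} (now using $\lfloor 2N/2\rfloor=N$) reproduces the left-hand side of \eqref{Fin_GG2_Intro_EQN}. Hence the two asserted identities are \emph{equivalent} to Theorem~\ref{Fin_GG_Ali_THM}, and it suffices to prove that polynomial identity.

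Second, to prove Theorem~\ref{Fin_GG_Ali_THM} I would use the recurrence-matching scheme already employed in the proof of Theorem~\ref{UG_polynomial_equality_THM}. For each fixed $N$ both sides are polynomials in $q$, so it is enough to exhibit a common linear recurrence in $N$ together with matching initial data; the base case $N=0$ is immediate, since both sides collapse to the generating function $1+q$ of $\GG_{1,1}$ (respectively the short sum for $\GG_{2,1}$). On the constructive side the recurrence comes directly from the $q$-binomial recurrence \eqref{Binom_rec}: applying it to the two $q$-binomial factors, as in the derivation of \eqref{UG_G_recurrence}, yields a three-term $N$-recurrence encoding the moves ``advance the largest part'', ``split a pair'', and ``cross the singletons'', and crucially the odd bound $2N+1$ removes any parity case split. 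On the Berkovich--McCoy--Orrick side the variable $N$ occurs only in ${N-m+1\brack n}_{q^2}$, so \eqref{Binom_rec} there produces a recurrence in $N$; the complication is that the summation index $m$ is shared between ${N-m+1\brack n}_{q^2}$ and ${n\brack m}_{q^2}$ and also enters the weight $q^{m^2+n^2}$, so one application of \eqref{Binom_rec} leaves a coupled two-index recurrence.

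The main obstacle is therefore reducing the Berkovich--McCoy--Orrick double sum to the \emph{same} closed $N$-recurrence as the constructive side. I expect the cleanest route is automated creative telescoping: feed each double sum to \texttt{qMultiSum} \cite{qMultiSum} to certify a recurrence in $N$, check that the two certified recurrences coincide (or that one normalizes to the other), and then verify agreement on finitely many small $N$ to pin the solution down uniquely. A by-hand alternative is to perform the inner sum first, for instance telescoping ${n\brack m}_{q^2}$ against $q^{m^2}$ to collapse the index $m$ before recursing in $N$; but showing that the resulting single-index recurrence matches the constructive one is precisely the step where the bookkeeping is heaviest, which is why I would lean on the symbolic tools. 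The second identity \eqref{GG2_Berkovich_McCoy_Orrick_oddBDD} is handled in exactly the same way, using the shifted weight $q^{m^2+n^2+2n}$ and the specialization of \eqref{GG2_Ali_BDD_EQN}.
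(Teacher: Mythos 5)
Your proposal is correct, but note that the paper offers no proof of this statement at all: Theorem~\ref{GG_Berkovich_McCoy_Orrick_THM} is imported verbatim from \cite{Berkovich_McCoy_Orrick}, where it arises from polynomial identities for the superconformal model $SM(2,4\nu)$ --- entirely different machinery. What you propose is, in effect, to run the paper's logic backwards: the paper combines the combinatorially proven bounded generating functions \eqref{GG1_Ali_BDD_EQN}--\eqref{GG2_Ali_BDD_EQN} \emph{with} the cited Berkovich--McCoy--Orrick theorem to obtain Theorem~\ref{Fin_GG_Ali_THM} ``by generating function interpretations,'' and then separately supplies a direct recurrence proof of Theorem~\ref{Fin_GG_Ali_THM}; you instead combine \eqref{GG1_Ali_BDD_EQN}--\eqref{GG2_Ali_BDD_EQN} with that direct recurrence proof to \emph{deduce} the Berkovich--McCoy--Orrick identities. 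This is non-circular precisely because the recurrence proof (the certified recurrences \eqref{Berkovich_McCoy_Iterated_Rec} and \eqref{Ali_GG_Rec} plus initial values) nowhere invokes Theorem~\ref{GG_Berkovich_McCoy_Orrick_THM}, and you correctly lean on that proof rather than on the generating-function-interpretation remark, which would be circular in this direction. Your specializations also check out: $x=1$, $N\mapsto 2N+1$ turns ${N-2m-2n+1\brack m}_q$ into ${2N-2m-2n+2\brack m}_q$ with $\lfloor(2N+2)/2\rfloor=N+1$, matching \eqref{Fin_GG1_Intro_EQN}, and likewise $\lfloor 2N/2\rfloor=N$ for the second identity and \eqref{Fin_GG2_Intro_EQN}. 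Two small corrections to your second step: the parity split that the symbolic tools require is on the summation variable $n$, forced by the $\lfloor n/2\rfloor$ appearing in the summand, not on the bound, so the odd bound $2N+1$ does not remove it --- the paper splits $n=2n+\nu$, $\nu\in\{0,1\}$, in \eqref{Ali_GG_left_Summand_Rec}; and since the common recurrence has order $3$, the base case $N=0$ alone is insufficient --- three consecutive values are needed (the paper checks $N=-1,0,1$), though your closing remark about ``finitely many small $N$'' covers this. What your rearrangement buys is worth stating explicitly: a self-contained, combinatorial-plus-automated proof of the Berkovich--McCoy--Orrick polynomial identities that bypasses their lattice-model origins entirely.
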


The $q$-binomials in the generating functions in Theorem~\ref{GG_Ali_BDD_THM} (or \eqref{GG_Kagan_BDD_THM}) and \ref{GG_Berkovich_McCoy_Orrick_THM} have different bases. The equality of \eqref{GG1_Ali_BDD_EQN}  with $x=1$, $N\mapsto 2N+1$ and \eqref{GG1_Berkovich_McCoy_Orrick_oddBDD}, and \eqref{GG2_Ali_BDD_EQN} with $x=1$, $N\mapsto 2N+1$ and \eqref{GG2_Berkovich_McCoy_Orrick_oddBDD}, respectively is Theorem~\ref{Fin_GG_Ali_THM}. Note that Theorem~\ref{Fin_GG_Ali_THM} is already proven by the generating function interpretations; regardless we would like to give a direct proof of this identity using recurrences. 

\begin{proof}[Proof of Theorem~\ref{Fin_GG_Ali_THM}]
For $i=1,2$, let $\R_{i,m,n}(N,q)$ be the right-hand side summands of \eqref{Fin_GG1_Intro_EQN} and \eqref{Fin_GG2_Intro_EQN}, respectively. Both Sigma \cite{Sigma} and/or qMultiSum \cite{qMultiSum} packages can find and automatically prove that these functions satisfy the recurrence
\begin{align}
\label{Berkovich_McCoy_Summand_Rec} \R_{i,m,n}(N+2,q) &= \R_{i,m,n}(N+1,q)+ q^{2N+5}\R_{i,m,n-1}(N+1,q) + q^{2N+4} \R_{i,m-1,n-1}(N,q). 
\intertext{Let ${}_r\S_i(N,q)$ be the sum on the right-hand side of \eqref{Fin_GG1_Intro_EQN} and \eqref{Fin_GG2_Intro_EQN} for $i =1,2$, respectively. By summing \eqref{Berkovich_McCoy_Summand_Rec} over $m,n\geq 0$ we see that ${}_r\S_i(N,q)$ satisfies }
\label{Berkovich_McCoy_Rec}{}_r\S_i(N+2,q) &= (1+ q^{2N+5})\,{}_r\S_{i}(N+1,q) + q^{2N+4}\, {}_r\S_{i}(N,q).
\intertext{We can iterate this recurrence by writing this recurrence with $N\mapsto N+1$ and then using \eqref{Berkovich_McCoy_Rec} directly for the term $q^{2N+7}{}_r\S_i(N+2,q)$. This yields the recurrence}
\label{Berkovich_McCoy_Iterated_Rec}  {}_r\S_i(N+3,q) &=  {}_r\S_i(N+2,q) + q^{2N+6}(1+q+q^{2N+6}) {}_r\S_i(N+1,q) + q^{4N+11} {}_r\S_i(N,q)
\intertext{Similarly we would like to find the recurrences for the left-hand sides of \eqref{Fin_GG1_Intro_EQN} and \eqref{Fin_GG2_Intro_EQN}. In this case we need to split the summation variable $n$ with respect to its parity with the use of symbolic summation tools. For $i=1,2$, let $\L_{i,m,n}(N)$ be the left-hand side summands of  \eqref{Fin_GG1_Intro_EQN} and \eqref{Fin_GG2_Intro_EQN}, respectively. Then once again Sigma \cite{Sigma} and/or qMultiSum \cite{qMultiSum} packages prove that these functions satisfy the recurrence}
\nonumber \L_{i,m,2n+\nu}(N+3) &= \L_{i,m,2n+\nu}(N+2)+q^{2N+6}(1+q)\L_{i,m-1,2n+\nu}(N+1)
\\\label{Ali_GG_left_Summand_Rec}&+q^{4N+12}\L_{i,m,2(n-1)+\nu}(N+1)+q^{4N+11}\L_{i,m-2,2n+\nu}(N),
\intertext{for $\nu = 0,1$. Then summing both sides of \eqref{Ali_GG_left_Summand_Rec} over $m,n\geq 0$, we get}
\label{Ali_GG_Rec} {}_l\S_i(N+3,q) &= {}_l\S_i(N+2,q) +  q^{2 N+6} (1 + q + q^{2 N+6}){}_l\S_i(N+1,q) + q^{4N+11} {}_l\S_i(N,q),
\end{align}
where ${}_l\S_i(N,q)$ represent the left-hand side summand of \eqref{Fin_GG1_Intro_EQN} and \eqref{Fin_GG2_Intro_EQN} for $i=1,2$, respectively. Notice that \eqref{Berkovich_McCoy_Iterated_Rec} and \eqref{Ali_GG_Rec} are the same order 3 recurrence. Therefore checking the first three initial values for both sides is enough to finish the proof. The initial values
\[\begin{array}{ll}
 {}_l\S_1(-1,q) = {}_r\S_1(-1,q) =1, 			& {}_l\S_2(-1,q) = {}_r\S_2(-1,q) =0, \\
 {}_l\S_1(0,q) = {}_r\S_1(0,q) =1+q, 			& {}_l\S_2(0,q) = {}_r\S_2(0,q) =1,\\
 {}_l\S_1(1,q) = {}_r\S_1(1,q) =1+q+q^2+q^3+q^4, & {}_l\S_2(1,q) = {}_r\S_2(1,q) =1+q^3 
\end{array}\] with the recurrences \eqref{Berkovich_McCoy_Iterated_Rec} and \eqref{Ali_GG_Rec} show that the identities \eqref{Fin_GG1_Intro_EQN} and \eqref{Fin_GG2_Intro_EQN} hold for any non-negative integer $N$.
\end{proof}

One corollary of Theorem~\ref{Fin_GG_Ali_THM} is the double sum equalities we get as we tend $N$ to infinity.

\begin{corollary}\label{Corollary_GG_Ali_Berkovich} We have
\begin{align*}
\sum_{m,n\geq 0} \frac{q^{\frac{3m^2+m}{2}+ 2mn+n^2}}{(q;q)_m(q^4;q^4)_{\lfloor n/2\rfloor}} &= \sum_{m,n\geq 0 } \frac{q^{2m^2+2mn+n^2}}{(q^2;q^2)_m(q^2;q^2)_n},\\
\sum_{m,n\geq 0} \frac{q^{\frac{3m^2+m}{2}+ 2mn+n^2+2m+2n}}{(q;q)_m(q^4;q^4)_{\lfloor n/2\rfloor}} &=\sum_{m,n\geq 0 } \frac{q^{2m^2+2mn+n^2+2m+2n}}{(q^2;q^2)_m(q^2;q^2)_n}. 
\end{align*}
\end{corollary}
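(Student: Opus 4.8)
The plan is to derive both identities of Corollary~\ref{Corollary_GG_Ali_Berkovich} directly from Theorem~\ref{Fin_GG_Ali_THM} by letting $N\to\infty$ in \eqref{Fin_GG1_Intro_EQN} and \eqref{Fin_GG2_Intro_EQN}. Each summand on either side is a monomial in $q$ times a product of $q$-binomial coefficients, so for a fixed power of $q$ only finitely many pairs $(m,n)$ contribute; hence the limit may be taken termwise on both sides with no convergence subtleties. The engine of the computation is the limit formula \eqref{Binom_limit}, applied to whichever $q$-binomial has a top argument growing in $N$.

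First I would handle the left-hand sides. In each of the two $q$-binomials appearing there the top argument grows linearly in $N$ while the bottom argument ($m$ or $\lfloor n/2\rfloor$) is fixed, so by \eqref{Binom_limit}
\[{2N-2m-2n+2 \brack m}_q \to \frac{1}{(q;q)_m}, \qquad {N-m-n+1+\lfloor n/2\rfloor \brack \lfloor n/2\rfloor}_{q^4} \to \frac{1}{(q^4;q^4)_{\lfloor n/2\rfloor}},\]
and likewise for the binomials in \eqref{Fin_GG2_Intro_EQN}. Since $G(m,n)=\frac{3m^2+m}{2}+2mn+n^2$, this reproduces verbatim the two left-hand sides of the corollary. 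Next I would treat the right-hand sides, where only the first factor, ${N-m+1\brack n}_{q^2}$ (respectively ${N-m\brack n}_{q^2}$), has a growing top argument; it tends to $(q^2;q^2)_n^{-1}$ by \eqref{Binom_limit}, while ${n\brack m}_{q^2}$ is independent of $N$. Using the definition \eqref{Binom_def} to write
\[\frac{1}{(q^2;q^2)_n}{n\brack m}_{q^2} = \frac{1}{(q^2;q^2)_m(q^2;q^2)_{n-m}},\]
the limiting right-hand side of \eqref{Fin_GG1_Intro_EQN} becomes $\sum_{m,n\geq0} q^{m^2+n^2}/\bigl((q^2;q^2)_m(q^2;q^2)_{n-m}\bigr)$, the summand vanishing unless $n\geq m$.

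The final step is the change of summation variable $n\mapsto n+m$. Substituting $n^2=(n+m)^2$ turns the exponent $m^2+n^2$ into $2m^2+2mn+n^2$ and the denominator into $(q^2;q^2)_m(q^2;q^2)_n$, which is exactly the right-hand side of the first corollary identity; the same substitution sends $q^{m^2+n^2+2n}$ to $q^{2m^2+2mn+n^2+2m+2n}$, giving the second. There is essentially no hard step here: the only point requiring care is the legitimacy of the termwise limit, and this is immediate because each of the four sums is a well-defined element of $\Z[[q]]$ whose coefficient of every fixed power of $q$ stabilizes once $N$ is large enough. Thus the corollary follows formally from Theorem~\ref{Fin_GG_Ali_THM}.
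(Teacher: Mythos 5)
Your proposal is correct and follows exactly the paper's own route: the paper likewise obtains the corollary by letting $N\to\infty$ in Theorem~\ref{Fin_GG_Ali_THM}, applying \eqref{Binom_limit} to each $q$-binomial, and performing the substitution $k=n-m$ (your $n\mapsto n+m$) with \eqref{1over_neg_Pochhammer} enforcing $n\geq m$. Your added remark justifying the termwise limit is a harmless elaboration of what the paper leaves implicit.
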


\begin{proof}On the right-hand side of the identities of Theorem~\ref{Fin_GG_Ali_THM}, we do the substitution $k=n-m$, use \begin{equation}
 \label{Binom_limit} \displaystyle \lim_{N\rightarrow\infty}{N\brack m}_q = \frac{1}{(q;q)_m}
\end{equation} and the property \begin{equation}
 \label{1over_neg_Pochhammer} \frac{1}{(q;q)_n} = 0, \text{ if }n<0.
\end{equation} We then take the limit $N\rightarrow\infty$ followed by $k\mapsto n$.\end{proof}

\section{Little G\"ollnitz partitions related double sum generating functions and their finitizations}\label{Sec_lG}

Another partition theorem pair that is structurally similar to G\"ollnitz--Gordon theorems is the little G\"ollnitz theorems.

\begin{theorem}[Little G\"ollnitz Partition Theorems]\label{lG_Thm} For $i=1,2$, the number of partitions of $n$ into parts $\geq i$ with gaps between parts $\geq 2$ and no consecutive odd parts, is the same as the number of partitions into parts congruent to $i,4-(-1)^i,5+i$ modulo 8.
\end{theorem}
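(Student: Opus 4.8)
\medskip
\noindent\emph{Proof proposal.}
The plan is to prove that the two enumerations agree by identifying their generating functions. The modular side is immediate: the partitions into parts congruent to $i$, $4-(-1)^i$, and $5+i$ modulo $8$ are generated by the infinite product
\begin{equation*}
\frac{1}{(q^{i},q^{4-(-1)^i},q^{5+i};q^8)_\infty},
\end{equation*}
which for $i=1$ and $i=2$ is exactly the first and the second product occurring in \eqref{little_Gollnitz_Difference}. Writing $\lG_i$ for the set of partitions into parts $\geq i$ with gaps $\geq 2$ and no two consecutive odd parts, the theorem is therefore equivalent to the analytic statement that $\sum_{\pi\in\lG_i}q^{|\pi|}$ equals this product, and I would treat the cases $i=1,2$ uniformly through the parameter $(-1)^i$.

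The first step is to obtain a single-sum closed form for $\sum_{\pi\in\lG_i}q^{|\pi|}$ by the minimal-configuration and rules-of-motion method of Sections~\ref{Sec_Uniform_gap} and \ref{Sec_GG}. The only structural change from the G\"ollnitz--Gordon construction is that the forbidden coincidence now occurs between two \emph{odd} parts rather than two even ones: two consecutive odd parts must differ by at least $4$, a mixed-parity neighbouring pair differs by at least $3$ automatically, and two even parts may still differ by exactly $2$. Placing $n$ parts at their minimal admissible positions together with the free forward motion of the resulting staircase contributes $q^{n^2+n}/(q^2;q^2)_n$, while the binary parity adjustments permitted by the ``no consecutive odds'' rule assemble into a factor $(-q^{(-1)^i};q^2)_n$ (the sign of the exponent being $-1$ for $i=1$ and $+1$ for $i=2$). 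Comparing the lowest coefficients fixes the shift and gives
\begin{equation*}
\sum_{\pi\in\lG_i}q^{|\pi|}=\sum_{n\geq 0}\frac{q^{n^2+n}\,(-q^{(-1)^i};q^2)_n}{(q^2;q^2)_n},
\end{equation*}
the little G\"ollnitz companion of the G\"ollnitz--Gordon series \eqref{GG1_orig_EQN}.

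It then remains to prove that this series equals the infinite product above; this is the classical analytic little G\"ollnitz identity and is the crux of the argument, the combinatorial bookkeeping of the previous step being a direct transcription of the construction already carried out in Section~\ref{Sec_GG}. The cleanest self-contained route is to exhibit a Bailey pair relative to a suitable base and apply Bailey's lemma once \cite{Gasper_Rahman}, which turns the left-hand side into a theta quotient whose numerator is then resolved by the Jacobi triple product identity into the residue-class product; alternatively one may simply cite the corresponding entries of Slater's list \cite{Slater_List}. Interpreting the resulting product as the generating function of partitions into parts $\equiv i,\,4-(-1)^i,\,5+i \imod{8}$ closes the loop and establishes the theorem for both $i=1$ and $i=2$.

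The main obstacle is precisely this series-to-product identity: locating the correct Bailey pair, equivalently pinning down the exact Slater entry for each residue class, is where the genuine work lies, whereas the reduction of the gap side to the single sum above mirrors, step for step, the G\"ollnitz--Gordon analysis of Section~\ref{Sec_GG}. A secondary point requiring care is the precise form of the parity factor $(-q^{(-1)^i};q^2)_n$ and the attendant $q^{-1}$ terms, which individually look negative but combine with $q^{n^2+n}$ to yield a genuine power series; I would verify this by checking that each summand has a nonnegative expansion and by matching initial coefficients against the explicit small-norm partitions in $\lG_i$.
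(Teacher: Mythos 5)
Your plan is essentially sound, but be aware that the paper offers no proof of Theorem~\ref{lG_Thm} to compare against: it is quoted as a classical background result, with the series--product identities \eqref{lG1_orig_EQN} and \eqref{lG2_orig_EQN} stated as its analytic form and the series-to-product step implicitly deferred to the classical literature (Slater's list \cite{Slater_List}), exactly as Theorem~\ref{GG_Thm} is attributed to G\"ollnitz and Gordon via Slater. Your reconstruction --- derive $\sum_{\pi\in\lG_i}q^{|\pi|}=\sum_{n\geq0}q^{n^2+n}(-q^{(-1)^i};q^2)_n/(q^2;q^2)_n$ combinatorially, then close the sum-to-product gap by a Bailey pair or by citing the Slater entries, then read off the residue classes $i$, $4-(-1)^i$, $5+i$ modulo $8$ --- is therefore the standard classical proof, more self-contained than anything the paper does, and your identification of the analytic identity as the crux is exactly right; your single-sum forms and products do match \eqref{lG1_orig_EQN}--\eqref{lG2_orig_EQN} for $i=1,2$. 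One correction to your framing, though: the combinatorial step is \emph{not} ``a direct transcription of the construction already carried out in Section~\ref{Sec_GG}.'' The minimal-configuration and rules-of-motion machinery of Sections~\ref{Sec_Uniform_gap} and \ref{Sec_GG}, when adapted to the little G\"ollnitz gap conditions, is precisely what the paper carries out in Section~\ref{Sec_lG}, and it produces the \emph{double} sums of Theorem~\ref{lG_double_sum_Thm} with denominators $(q;q)_m(q^4;q^4)_{\lfloor n/2\rfloor}$ --- not your single sum with the parity factor $(-q^{(-1)^i};q^2)_n$. That single-sum form rests on a different, older decomposition (staircase $(2,4,\dots,2n)$, or its shift for $i=1$, plus even forward moves contributing $(q^2;q^2)_n^{-1}$, plus distinct odd parity adjustments contributing the $(-q^{\pm1};q^2)_n$ factor, with the ``no consecutive odd parts'' condition encoding the distinctness), which goes back to G\"ollnitz and would need to be carried out in detail or cited; your proposed check of lowest coefficients (e.g., $q^{n^2+n-1}$ against the minimal configuration $(1,4,6,\dots,2n)$ for $i=1$) is the right sanity test but not a substitute for that bijection. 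Net comparison: the paper's new construction buys manifestly positive double-sum representations and finitizations, while your route buys an actual proof of the classical theorem at the cost of invoking Slater/Bailey machinery.
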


These partition theorems have analytic forms similar to G\"ollnitz--Gordon identities. These identities are
\begin{align}
\label{lG1_orig_EQN}\sum_{n\geq0} \frac{q^{n^2+n}(-1/q;q^2)_n}{(q^2;q^2)_n} &= \frac{1}{(q,q^5,q^6;q^8)_\infty},\\
\label{lG2_orig_EQN}\sum_{n\geq0} \frac{q^{n^2+n}(-q;q^2)_n}{(q^2;q^2)_n} &= \frac{1}{(q^2,q^3,q^7;q^8)_\infty},
\end{align} for $i=1$ and $2$, respectively.

We can write a double sum representation of the generating function for the little G\"ollnitz partitions with the gap conditions.

\begin{theorem}\label{lG_double_sum_Thm} Let 
\begin{align}
\label{lG_size}L(m,n)&:= 3\frac{m^2+m}{2} + 2nm + n^2 + n
\intertext{then we have}
\label{lG1_Ali_EQN}\sum_{\pi\in\lG_1} x^{\#(\pi)}q^{|\pi|} &= \sum_{m,n\geq 0} \frac{x^{m+n}q^{L(m,n)}}{(q;q)_m(q^4;q^4)_{\lfloor n/2 \rfloor}} +  \sum_{m,n\geq 0} \frac{x^{m+n+1}q^{L(m,n)+2m+2n+1}}{(q;q)_m(q^4;q^4)_{\lfloor n/2 \rfloor}} ,\\
\label{lG2_Ali_EQN}\sum_{\pi\in\lG_2} x^{\#(\pi)}q^{|\pi|} &= \sum_{m,n\geq 0} \frac{x^{m+n}q^{L(m,n)}}{(q;q)_m(q^4;q^4)_{\lfloor n/2 \rfloor}},
\end{align} where $\lG_i$ is the set of partitions that satisfy the gap conditions of Theorem~\ref{lG_Thm} with the respective $i$.
\end{theorem}

The double sum representations of these generating functions are made out of terms that have manifestly non-negative power series coefficients. This is once again different than the double series representations of Andrews--Bringmann-Mahlburg \cite{Andrews_Mahlburg_Bringmann}.

Note that the difference of \eqref{lG1_Ali_EQN} and \eqref{lG2_Ali_EQN} with $x=1$, and the little G\"ollnitz products (the right-hand sides of \eqref{lG1_orig_EQN} and \eqref{lG2_orig_EQN}) yield Theorem~\ref{Intro_THM_little_G}.

To prove \eqref{lG_double_sum_Thm}, we need to give the minimal constructions and the motions of the parts of partitions as in Section~\ref{Sec_GG}. The two possible minimal configurations where there are $n$ initially placed close parts followed by $m$ sinlgletons are
\begin{align}
\label{lG_min_config1} {}_{1}\pi_{m,n} &:=(\underbrace{2,4,6,\dots,2n}, 2n+3 ,2n+6, 2n+9\dots,2n + 3m ), \\
\label{lG_min_config2} {}_{2}\pi_{m,n} &:=(1,\underbrace{4,6,8,\dots,2n+2} ,2n+5, 2n+8\dots,2n+2+ 3m ), 
\end{align} where in \eqref{lG_min_config2} the part 1 is the sole singleton that does not move. Note that the sizes of ${}_{1}\pi_{m,n}$ and ${}_{2}\pi_{m,n}$ are $L(m,n)$ as defined in \eqref{lG_size} and $L(m,n)+2n+2m+1$, respectively. Also the number of parts of ${}_{1}\pi_{m,n}$ and ${}_{2}\pi_{m,n}$ are $m+n$ and $m+n+1$, respectively.

Motions of the singletons are defined as before. When it comes to splitting from the initial chains and moving forward we need to move a pair of consecutive evens to the next consecutive evens (analogous to the odd pair case of G\"ollnitz--Gordon identities) and with this free forwards motion \begin{equation}\label{lG_pair_move}
\underline{2k, 2k+2}\mapsto \underline{2k+2, 2k+4},
\end{equation} we see that we need to add 4 at every move. The crossing of consecutive even pairs over singletons can be defined by the following motions. For any positive integer $k$ the forward motion of an even pair over singletons is
\begin{align}
\label{lG_crossing_over_even_singleton} \underline{2k, 2k+2}, 2k+4 &\mapsto 2k,\underline{2k+4,2k+6},\intertext{or}
\label{lG_crossing_over_odd_singleton} \underline{2k, 2k+2}, 2k+5 &\mapsto 2k+1,\underline{2k+4, 2k+6}.
\end{align}
These motions are reversible. With the inverses of these motions any partition from $\lG_1$ can be traced back to its minimal configuration. If the partition has the smallest part $1$ then it is an image of ${}_{2}\pi_{m,n}$ for some non-negative integers $m$ and $n$. If 1 is not a part then that partition also lies in the set $\lG_2$ and can be traced back to ${}_{1}\pi_{m,n}$ for some non-negative integers $m$ and $n$. This finishes the proof of Theorem~\ref{lG_double_sum_Thm}.

As we included detailed examples for the backwards tracing of partitions in Sections~\ref{Sec_Uniform_gap} and \ref{Sec_GG}, we omit it here.

Similar to Sections~\ref{Sec_Uniform_gap} and \ref{Sec_GG}, we can easily impose a bound in the largest part of partitions counted. This yields refinements of the generating functions for the number of partitions from sets $\lG_i$ for $i=1,2$. For a non-negative integer $N$ the largest singletons of ${}_{1}\pi_{m,n}$ and ${}_{2}\pi_{m,n}$ ( $3m+2n$ and $3m+2n+2$, respectively) can move forward $N-3m-2n$ and $N-3m-2n-2$ steps, respectively. Therefore, the respective generating functions for the forwards motions of the $m$ singletons are \begin{equation}\label{lG_BDD_singleton_moves}
{N-3m-2n + m \brack m}_q,\text{ and }{N-3m-2n-2 + m \brack m}_q.
\end{equation} 
Similar to Theorem~\ref{Unif_Gap_THM_Bounded} the second $q$-binomial in \eqref{lG_BDD_singleton_moves} misses to count the single partition $(1)$ when the $N=1$. This is due to the assumption that the largest singleton (that can move) is $3m+2n+2 > 1$. This issue does not arise for $N>1$. This error can easily corrected with a Kronecker delta function in our calculations. 

The forward motions of the $\lfloor n/2\rfloor$ consecutive even pairs that can split from the initial chain are \[\left\lfloor \frac{N-2n}{2}\right\rfloor-m, \text{ and }\left\lfloor \frac{N-(2n+2)}{2}\right\rfloor-m,\] where, analogous to the G\"ollnitz--Gordon case, we have a floor function due to every forward motion carrying the pairs 2 steps instead of 1, and the subtraction of $m$ is due to the possible crossing over the $m$ singletons losing the single extra motion. Therefore, the respective generating functions for the forward motions of the $\lfloor n/2 \rfloor$ pairs are \begin{equation}\displaystyle
{\left\lfloor \frac{N-2n}{2}\right\rfloor-m+\left\lfloor \frac{n}{2}\right\rfloor \brack \left\lfloor \frac{n}{2}\right\rfloor}_{q^4} \text{ and }{\left\lfloor \frac{N-2n-2}{2}\right\rfloor-m+\left\lfloor \frac{n}{2}\right\rfloor \brack \left\lfloor \frac{n}{2}\right\rfloor}_{q^4}. 
\end{equation}
This construction yields the following theorem after simplifying the floor functions.

\begin{theorem}\label{GG_Ali_BDD_THM}Let $L(m,n)$ be as in \eqref{lG_size} and $N$ be a non-negative integer. We have  \begin{align} 
\label{lG1_Ali_BDD_EQN}\sum_{\pi\in\lG_{1,N}}x^{\#(\pi)}q^{|\pi|} &= \sum_{m,n\geq 0}x^{m+n}q^{L(m,n)}{N-2m-2n \brack m}_q {\left\lfloor \frac{N}{2}\right\rfloor-m-n+\left\lfloor \frac{n}{2}\right\rfloor \brack \left\lfloor \frac{n}{2}\right\rfloor}_{q^4} \\\nonumber&\hspace{-1cm}+ \sum_{m,n\geq 0}x^{m+n+1}q^{L(m,n)+2m+2n+1}{N-2m-2n-2 \brack m}_q{\left\lfloor \frac{N}{2}\right\rfloor-m-n-1+\left\lfloor \frac{n}{2}\right\rfloor \brack \left\lfloor \frac{n}{2}\right\rfloor}_{q^4}+ \delta_{N,1}\,xq
,\\
\label{lG2_Ali_BDD_EQN}\sum_{\pi\in\lG_{2,N}}x^{\#(\pi)}q^{|\pi|} &= \sum_{m,n\geq 0} x^{m+n}q^{L(m,n)}{N-2m-2n \brack m}_q{\left\lfloor \frac{N}{2}\right\rfloor-m-n+\left\lfloor \frac{n}{2}\right\rfloor \brack \left\lfloor \frac{n}{2}\right\rfloor}_{q^4} ,
\end{align} where $\lG_{i,N}$ is the set of partitions from $\GG_i$ with the extra bound $N$ on the largest part of the partitions, and $\delta_{i,j}$ is the Kronecker delta.
\end{theorem}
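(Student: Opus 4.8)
The plan is to complete the combinatorial construction already begun in the paragraphs preceding the statement, in exact parallel with the finitization carried out for the uniform-gap case in Theorem~\ref{Unif_Gap_THM_Bounded} and for the G\"ollnitz--Gordon case around \eqref{GG1_pair_Bounded_move}. Since Theorem~\ref{lG_double_sum_Thm} already establishes, via bijective rules of motion, that every partition in $\lG_1$ (resp. $\lG_2$) descends from a unique minimal configuration \eqref{lG_min_config1}--\eqref{lG_min_config2}, the only task is to re-run that bijection under the cap $N$ on the largest part and to replace each infinite $q$-Pochhammer factor of \eqref{lG1_Ali_EQN}--\eqref{lG2_Ali_EQN} by the $q$-binomial coefficient that counts the now-bounded motions. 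I would organize the argument around the two families ${}_1\pi_{m,n}$ and ${}_2\pi_{m,n}$ and treat the singleton motions and the pair motions separately.

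First I would handle the singletons. The largest singletons of ${}_1\pi_{m,n}$ and ${}_2\pi_{m,n}$ are $3m+2n$ and $3m+2n+2$, so under the bound they may advance $N-3m-2n$ and $N-3m-2n-2$ steps, respectively. As the $m$ singletons travel as a nested, partition-like family inside a box, the free factor $(q;q)_m^{-1}$ is replaced by the box generating functions \eqref{lG_BDD_singleton_moves}, which rewrite as the first $q$-binomials ${N-2m-2n \brack m}_q$ and ${N-2m-2n-2 \brack m}_q$ appearing in \eqref{lG1_Ali_BDD_EQN}--\eqref{lG2_Ali_BDD_EQN}. Next I would count the pair motions. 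Each free move \eqref{lG_pair_move} and each crossover \eqref{lG_crossing_over_even_singleton}--\eqref{lG_crossing_over_odd_singleton} adds $4$ to the norm, so the base is $q^4$ and the factor $(q^4;q^4)_{\lfloor n/2\rfloor}^{-1}$ is what gets finitized. The delicate bookkeeping step is determining how far the largest splittable pair $\underline{2n-2,2n}$ (resp. $\underline{2n,2n+2}$) may travel: ignoring singletons it admits $\lfloor(N-2n)/2\rfloor$ (resp. $\lfloor(N-2n-2)/2\rfloor$) moves, but since a crossover darts the pair forward $4$ rather than $2$, it forfeits exactly one admissible move per singleton it passes, giving maximal move-counts $\lfloor(N-2n)/2\rfloor-m$ and $\lfloor(N-2n-2)/2\rfloor-m$. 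The $\lfloor n/2\rfloor$ pairs then fill a box of that width, producing the second $q$-binomials in base $q^4$; absorbing $\lfloor(N-2n)/2\rfloor=\lfloor N/2\rfloor-n$ and $\lfloor(N-2n-2)/2\rfloor=\lfloor N/2\rfloor-n-1$ yields the stated floor expressions.

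It remains to assemble the pieces and reconcile one boundary anomaly. For $\lG_2$ only the family ${}_1\pi_{m,n}$ occurs, giving the single double sum \eqref{lG2_Ali_BDD_EQN}. For $\lG_1$ both families occur: descendants of ${}_1\pi_{m,n}$ (smallest part exceeding $1$) give the first sum, while descendants of ${}_2\pi_{m,n}$ (smallest part equal to $1$, accounting for the norm shift by $2m+2n+1$ and the extra part recorded by $x^{m+n+1}$) give the second. The genuine anomaly is that when $N=1$ the partition $(1)={}_2\pi_{0,0}$ is not registered, because at $(m,n,N)=(0,0,1)$ the factor ${N-2m-2n-2 \brack m}_q$ specializes to ${-1\brack 0}_q=0$; I would repair this exactly as in Theorem~\ref{Unif_Gap_THM_Bounded} by inserting the correction $\delta_{N,1}\,xq$, after checking that every other partition of $\lG_{1,N}$ is correctly counted for all $N\geq 0$ (in particular that $(1)$ is already captured once $N\geq 2$).

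The main obstacle is not a single hard computation but the careful verification that the bounded rules of motion remain bijective and, above all, that the move-count $\lfloor(N-2n)/2\rfloor-m$ holds in every edge case---$m=0$ (no singletons), $n$ odd, and the small values $N\in\{0,1\}$ where the $q$-binomials can vanish or degenerate. As an independent safeguard and alternative proof I would, following the templates of Theorems~\ref{UG_polynomial_equality_THM} and~\ref{Fin_GG_Ali_THM}, apply \eqref{Binom_rec} to each summand to obtain a single recurrence in $N$ satisfied by both sides, sum over $m,n\geq 0$, and match a bounded list of initial values, with the computations optionally certified by \cite{Sigma} and \cite{qMultiSum}.
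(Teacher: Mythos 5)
Your proposal is correct and takes essentially the same route as the paper: the paper's proof is precisely this bounded re-run of the bijection behind Theorem~\ref{lG_double_sum_Thm}, replacing $(q;q)_m^{-1}$ by the box generating functions \eqref{lG_BDD_singleton_moves}, counting the pair motions as $\lfloor (N-2n)/2\rfloor - m$ and $\lfloor (N-2n-2)/2\rfloor - m$ in base $q^4$, and repairing the missed partition $(1)$ at $N=1$ with the term $\delta_{N,1}\,xq$. The recurrence/initial-value verification you add as a safeguard is not part of the paper's argument here, though it mirrors the paper's proofs of Theorems~\ref{UG_polynomial_equality_THM} and~\ref{Fin_GG_Ali_THM}.
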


Alladi and Berkovich \cite{Alladi_Berkovich} have formulated a double sum representation of the generating functions for the little G\"ollnitz partitions with gap conditions. This is similar to the Theorem~\ref{GG_Berkovich_McCoy_Orrick_THM} in structure.

\begin{theorem}[Alladi, Berkovich \cite{Alladi_Berkovich}]\label{lG_Alladi_Berkovich} We have
\begin{align}
\label{lG1_Alladi_Berkovich_EQN}\sum_{\pi\in\lG_{1}}q^{|\pi|} &= \sum_{m,n\geq 0} \frac{q^{m^2+2mn+2n^2+m-n}}{(q^2;q^2)_m(q^2;q^2)_n},\\
\label{lG2_Alladi_Berkovich_EQN}\sum_{\pi\in\lG_{2}}q^{|\pi|} &= \sum_{m,n\geq 0} \frac{q^{m^2+2mn+2n^2+m+n}}{(q^2;q^2)_m(q^2;q^2)_n}.
\end{align}
\end{theorem}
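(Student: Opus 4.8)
The plan is to prove both identities directly from the analytic forms \eqref{lG1_orig_EQN} and \eqref{lG2_orig_EQN} of the little G\"ollnitz theorems, whose left-hand sides are the gap-side generating functions $\sum_{\pi\in\lG_1}q^{|\pi|}$ and $\sum_{\pi\in\lG_2}q^{|\pi|}$ (the sum sides of those identities). I would treat the two cases uniformly by writing $z=q^{\sigma}$ with $\sigma=-1$ for $\lG_1$ and $\sigma=+1$ for $\lG_2$, so that the common starting point is $\sum_{n\geq0}q^{n^2+n}(-q^{\sigma};q^2)_n/(q^2;q^2)_n$. The first step is to expand the terminating product by the $q$-binomial theorem \cite[p.354, II.4]{Gasper_Rahman} in the form $(-q^{\sigma};q^2)_n=\sum_{m=0}^{n}{n\brack m}_{q^2}\,q^{m(m-1)}q^{\sigma m}$, where the factor $q^{2\binom{m}{2}}=q^{m(m-1)}$ reflects the base $q^{2}$.

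Substituting this expansion and interchanging the two (for $|q|<1$, absolutely convergent) summations produces a double sum whose summand carries ${n\brack m}_{q^2}/(q^2;q^2)_n$. Rewriting the $q$-binomial by its definition \eqref{Binom_def} as $(q^2;q^2)_n/\big((q^2;q^2)_m(q^2;q^2)_{n-m}\big)$ cancels $(q^2;q^2)_n$ and leaves $\sum_{0\le m\le n}q^{n^2+n+m^2-m+\sigma m}/\big((q^2;q^2)_m(q^2;q^2)_{n-m}\big)$. The decisive step is the reindexing $k=n-m$: setting $n=m+k$ collapses the exponent to $2m^2+2mk+k^2+k+\sigma m$ and the denominator to $(q^2;q^2)_m(q^2;q^2)_k$. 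Finally, renaming the indices so that $k$ becomes the outer variable (say $a=k$, $b=m$) turns the exponent into $a^2+2ab+2b^2+a+\sigma b$, which is exactly \eqref{lG1_Alladi_Berkovich_EQN} when $\sigma=-1$ and \eqref{lG2_Alladi_Berkovich_EQN} when $\sigma=+1$.

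I do not expect a deep obstacle along this route: every manipulation is a standard $q$-series step, and the only place demanding genuine care is matching the quadratic exponents after the change of variables. The single linear term is precisely what distinguishes the two theorems, so an arithmetic slip in the sign of that term would interchange $\lG_1$ and $\lG_2$; this is the one computation I would double-check. One should also verify that the interchange of summations is legitimate, which is immediate termwise for $|q|<1$.

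As an alternative, more in keeping with the recurrence arguments used for Theorem~\ref{Fin_GG_Ali_THM}, one could instead finitize: bound the largest part by $N$, replace the infinite products on the Alladi--Berkovich side by the $q^2$-binomials suggested by the same $k=n-m$ substitution, and match the resulting polynomials against the bounded representations \eqref{lG1_Ali_BDD_EQN} and \eqref{lG2_Ali_BDD_EQN} through a common recurrence in $N$ found and certified with Sigma \cite{Sigma} or qMultiSum \cite{qMultiSum}, finishing via \eqref{Binom_limit} and \eqref{1over_neg_Pochhammer} as $N\to\infty$. In that route the real difficulty would be the $\lG_1$ identity, whose author-side generating function is a sum of two double sums together with the $\delta_{N,1}$ correction term, making the recurrence bookkeeping and the initial conditions markedly heavier than for the single-sum $\lG_2$ case; this is why I would prefer the direct expansion above.
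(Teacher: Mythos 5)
Your proposal is correct, and it takes a different route from the paper, which in fact offers no proof of Theorem~\ref{lG_Alladi_Berkovich} at all: the result is imported with a citation to Alladi--Berkovich \cite{Alladi_Berkovich}, whose original derivation is combinatorial (weights and parity conditions on G\"ollnitz--Gordon partitions); the closest the paper comes internally is the remark following \eqref{Not_AB_lG2_EQN}, where \eqref{lG2_Alladi_Berkovich_EQN} is recovered as the $N\to\infty$ limit of the finite identity \eqref{Not_AB_lG2_EQN}, while \eqref{lG1_Alladi_Berkovich_EQN} only matches the limit of \eqref{Not_AB_lG1_EQN} after the auxiliary manipulation \eqref{simplify_EQN}. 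Your direct expansion is a clean, purely analytic alternative, and the arithmetic checks out: writing $(-q^{\sigma};q^2)_n=\sum_{m=0}^{n}{n\brack m}_{q^2}q^{m(m-1)+\sigma m}$ by the terminating $q$-binomial theorem \cite[p.354, II.4]{Gasper_Rahman}, cancelling $(q^2;q^2)_n$, and substituting $n=m+k$ turns the exponent $n^2+n+m(m-1)+\sigma m$ into $2m^2+2mk+k^2+k+\sigma m$, which after relabelling $(a,b)=(k,m)$ is $a^2+2ab+2b^2+a+\sigma b$ --- exactly the right-hand sides of \eqref{lG1_Alladi_Berkovich_EQN} and \eqref{lG2_Alladi_Berkovich_EQN} for $\sigma=-1$ and $\sigma=+1$, with the distinguishing linear term carrying the correct sign; the interchange of summations is harmless since all terms have nonnegative coefficients. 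The one input you should flag explicitly is that the sum sides of \eqref{lG1_orig_EQN} and \eqref{lG2_orig_EQN} are the generating functions for $\lG_1$ and $\lG_2$: this is classical (due to G\"ollnitz) and tacitly assumed by the paper as well, but your proof is only as self-contained as that identification. What each approach buys: yours is short and verifiable line by line; the Alladi--Berkovich combinatorial route explains the double sum (the two indices track a refined statistic) rather than merely producing it, and the paper's finite identities \eqref{Not_AB_lG1_EQN}--\eqref{Not_AB_lG2_EQN} carry strictly more information, with the theorem as their limiting shadow. Your fallback finitization plan would also work, but your assessment of its cost for the $\lG_1$ case (two double sums plus the $\delta_{N,1}$ correction) is accurate, so the direct expansion is the better choice.
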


Notice that comparison of Theorems~\ref{lG_double_sum_Thm} and \ref{lG_Alladi_Berkovich} directly yields a similar result to Corollary~\ref{Corollary_GG_Ali_Berkovich}. On the other hand, the finite analog of Theorem~\ref{lG_Alladi_Berkovich} is not clear. Therefore, a similar result to Theorem~\ref{Fin_GG_Ali_THM} is harder to achieve. 

\begin{theorem} For any non-negative integer $N$ we have
\begin{align}
\label{lG_BDD_transform1} \sum_{m,n\geq 0} &q^{L(m,n)}{2N-2m-2n \brack m}_q{N-m-n+\left\lfloor \frac{n}{2}\right\rfloor \brack \left\lfloor \frac{n}{2}\right\rfloor}_{q^4}= \sum_{m,n\geq 0} q^{m^2+n^2+n} {N-m\brack n}_{q^2} {n\brack m}_{q^2}.\\
\nonumber\sum_{m,n\geq 0}&q^{L(m,n)+2m+2n+1}{2N-2m-2n-2 \brack m}_q{N-m-n-1+\left\lfloor \frac{n}{2}\right\rfloor \brack \left\lfloor \frac{n}{2}\right\rfloor}_{q^4}\\\label{lG_BDD_transform2} &\hspace{4cm}= \sum_{m,n\geq 0} q^{m^2+n^2+3n+1} {N-m-1\brack n}_{q^2} {n\brack m}_{q^2},
\end{align} where $L(m,n)$ is defined as in \eqref{lG_size}.
\end{theorem}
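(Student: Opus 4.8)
The plan is to prove both polynomial identities by the same recurrence-matching strategy that was used successfully in the proof of Theorem~\ref{Fin_GG_Ali_THM}, since the right-hand sides here are exactly of the Berkovich--McCoy--Orrick type and the left-hand sides are of the bounded little G\"ollnitz type from Theorem~\ref{GG_Ali_BDD_THM}. First I would observe that these two identities are \emph{not} genuinely new statements to be verified from scratch: they are the natural refinement/finitization whose $N\to\infty$ limit (via the substitution $k=n-m$, followed by \eqref{Binom_limit} and \eqref{1over_neg_Pochhammer}) recovers the comparison of Theorem~\ref{lG_double_sum_Thm} and Theorem~\ref{lG_Alladi_Berkovich}. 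However, since a clean finite analog of Theorem~\ref{lG_Alladi_Berkovich} is exactly what is being asserted, the honest route is a direct $q$-series proof by recurrences rather than an appeal to a combinatorial bijection.

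For the right-hand sides, let $\R_{m,n}(N,q)$ denote the summand of either \eqref{lG_BDD_transform1} or \eqref{lG_BDD_transform2}. These are structurally identical to the summands in \eqref{Fin_GG1_Intro_EQN}--\eqref{Fin_GG2_Intro_EQN} up to a shift in the $q$-power and the index of the outer binomial, so I expect Sigma \cite{Sigma} and/or qMultiSum \cite{qMultiSum} to produce a second-order recurrence in $N$ of the same shape as \eqref{Berkovich_McCoy_Summand_Rec}, which upon summing over $m,n\geq 0$ gives a recurrence for the full right-hand sum analogous to \eqref{Berkovich_McCoy_Rec}. I would then iterate it once, exactly as \eqref{Berkovich_McCoy_Rec} was iterated into \eqref{Berkovich_McCoy_Iterated_Rec}, to obtain a third-order recurrence with polynomial coefficients in $q$ and $q^{2N}$.

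For the left-hand sides, I would split the inner summation variable $n$ according to its parity (writing $n=2n'+\nu$ with $\nu\in\{0,1\}$) so that the floor functions $\lfloor n/2\rfloor$ become honest arguments, and then feed the resulting summands $\L_{m,n}(N,q)$ to the same symbolic-summation tools to extract a recurrence for each parity class, in the spirit of \eqref{Ali_GG_left_Summand_Rec}. Summing over $m$ and $n'$ (and over $\nu$) should collapse these into a single third-order recurrence for the left-hand sum. The crux of the argument is then to verify that this recurrence coincides \emph{term by term} with the third-order recurrence obtained for the right-hand side; once the recurrences agree, matching the first three initial values at small $N$ (say $N=0,1,2$, computed directly) forces equality of the two sides for all non-negative integers $N$.

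The main obstacle I anticipate is bookkeeping with the parity split and with the constant/boundary terms. In the bounded little G\"ollnitz setting a spurious term appears (compare the correction $\delta_{N,1}\,xq$ in \eqref{lG1_Ali_BDD_EQN}), arising because the $q$-binomial recurrence \eqref{Binom_rec} fails when both arguments vanish and because the largest singleton that can move is $3m+2n+2>1$; I would need to track whether any such Kronecker-delta correction survives in \eqref{lG_BDD_transform1}--\eqref{lG_BDD_transform2} and, if so, confirm that it is reproduced identically on both sides or is absorbed into the choice of initial values. A secondary subtlety is that the two identities carry different shifts ($N\mapsto N$ versus the $N+1$ appearing in the outer binomial of \eqref{lG_BDD_transform2} together with the extra factor $q^{3n+3}$), so although I expect the homogeneous parts of the two recurrences to be governed by the same characteristic data as in \eqref{Berkovich_McCoy_Iterated_Rec} and \eqref{Ali_GG_Rec}, the inhomogeneous pieces and initial conditions must be handled case by case. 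Provided the symbolic tools cooperate, none of this should require more than careful simplification, and the proof concludes exactly as the proof of Theorem~\ref{Fin_GG_Ali_THM} did.
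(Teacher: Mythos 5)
Your proposal matches the paper's proof essentially step for step: the paper likewise splits the left-hand summation variable $n$ by parity, uses Sigma/qMultiSum to derive a third-order recurrence for the left side and a second-order one for the right side, iterates the latter into the same third-order recurrence, and finishes by checking three initial values (the paper uses $N=-1,0,1$; no Kronecker-delta correction in fact survives in \eqref{lG_BDD_transform1}--\eqref{lG_BDD_transform2}). Your anticipated bookkeeping concerns are reasonable but turn out to be harmless, so the argument goes through exactly as you outline.
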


\begin{proof} Analogous to the proof of Theorem~\ref{Fin_GG_Ali_THM}, we will show that both sides of the identities satisfy the same recurrences. On the left-hand sides we need to split the even and odd cases of the summation variable $n$, but besides that it is standard and automatic to get the recurrences for both sides of these identities using the computer algebra implementations Sigma \cite{Sigma} and/or qMultiSum \cite{qMultiSum}. Once again, for $i=1,2$, let $\L_{i,m,n}(N,q)$ and $\R_{i,m,n}(N,q)$ be the left-hand and right hand side summands of \eqref{lG_BDD_transform1} and \eqref{lG_BDD_transform2}, respectively. We get that these functions satisfy the recurrences 
\begin{align}
\nonumber\L_{i,m,2n+\nu}(N+3,q) &= \L_{i,m,2n+\nu}(N+2,1)+q^{2N+5}(1+q)\L_{i,m-1,2n+\nu}(N+1,q)
\\\nonumber&\hspace{1cm}+q^{4N+10}\L_{i,m,2(n-1)+\nu}(N+1,q)+q^{4N+9}\L_{i,m-2,2n+\nu}(N,q),
\intertext{for $\nu=0,1$, and}
\nonumber\R_{i,m,n}(N+2,q) &= \R_{i,m,n}(N+1,q)+ q^{2N+4}\R_{i,m,n-1}(N+1,q) + q^{2N+3} \R_{i,m-1,n-1}(N,q). 
\intertext{This shows that the left-hand and right-hand side summations (denoted by ${}_l\S(N,q)$ and ${}_r\S(N,q)$, respectively) satisfy}
\label{lG_BDD_left_Rec}{}_l\S_i(N+3,q) &= {}_l\S_i(N+2,q) +  q^{2 N+5} (1 + q + q^{2 N+5}){}_l\S_i(N+1,q) + q^{4N+9} {}_l\S_i(N,q),
\intertext{and}
\label{Berkovich_McCoy_type_Rec_lG}{}_r\S_i(N+2,q) &= (1+ q^{2N+4})\,{}_r\S_{i}(N+1,q) + q^{2N+3}\, {}_r\S_{i}(N,q).
\end{align}
Iterating \eqref{Berkovich_McCoy_type_Rec_lG}, we see that ${}_r\S_i(N,q)$ also satisfy \eqref{lG_BDD_left_Rec}. Therefore showing the equality of the first 3 terms for both sides is enough to validate the identities of \eqref{lG_BDD_transform1} and \eqref{lG_BDD_transform2}. The initial values
\[\begin{array}{ll}
 {}_l\S_1(-1,q) = {}_r\S_1(-1,q) =0, 	& {}_l\S_2(-1,q) = {}_r\S_2(-1,q) =0, \\
 {}_l\S_1(0,q) = {}_r\S_1(0,q) =1, 		& {}_l\S_2(0,q) = {}_r\S_2(0,q) =0,\\
 {}_l\S_1(1,q) = {}_r\S_1(1,q) =1+q^2,	& {}_l\S_2(1,q) = {}_r\S_2(1,q) =q, 
\end{array}\] and the recurrence \eqref{lG_BDD_left_Rec} together finish the proof.
\end{proof}

Moreover, The left-hand side of \eqref{lG_BDD_transform1} is the right-hand side of \eqref{lG2_Ali_BDD_EQN} with $x=1$ and $N\mapsto2N$. Also the side by side addition of \eqref{lG_BDD_transform1} and \eqref{lG_BDD_transform2} (with $m\mapsto m-1$) and  using \eqref{Binom_rec} gives the right hand side of \eqref{lG1_Ali_BDD_EQN} with  $x=1$ and $N\mapsto2N$. Hence, we get the following theorem.

\begin{theorem}For any non-negative integer $N$, we have
\begin{align}
\label{Not_AB_lG1_EQN}\sum_{\pi\in\lG_{1,2N}}q^{|\pi|} &=   \sum_{m,n\geq 0} q^{m^2+n^2+n} {N-m\brack n}_{q^2} {n+1\brack m}_{q^2},\\
\label{Not_AB_lG2_EQN}\sum_{\pi\in\lG_{2,2N}}q^{|\pi|} &= \sum_{m,n\geq 0} q^{m^2+n^2+n} {N-m\brack n}_{q^2} {n\brack m}_{q^2}.
\end{align}
\end{theorem}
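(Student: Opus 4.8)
The plan is to deduce both identities directly from the bounded generating function formulas \eqref{lG1_Ali_BDD_EQN} and \eqref{lG2_Ali_BDD_EQN} together with the polynomial transformations \eqref{lG_BDD_transform1} and \eqref{lG_BDD_transform2}; all of the genuinely analytic content has already been established in those results, so no fresh recurrence argument is required. The first step is to specialize \eqref{lG1_Ali_BDD_EQN} and \eqref{lG2_Ali_BDD_EQN} at $x=1$ and $N\mapsto 2N$. The key simplification here is that $\lfloor 2N/2\rfloor = N$, so every floor function on the right-hand sides collapses cleanly, and the Kronecker correction $\delta_{N,1}\,xq$ in \eqref{lG1_Ali_BDD_EQN} becomes $\delta_{2N,1}\,q$, which vanishes identically because $2N$ is even. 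This last observation is precisely what makes the even truncation $2N$, rather than an arbitrary bound, the natural choice.

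For \eqref{Not_AB_lG2_EQN} this already finishes the argument: after the specialization, the right-hand side of \eqref{lG2_Ali_BDD_EQN} is, term for term, the left-hand side of \eqref{lG_BDD_transform1}, and invoking \eqref{lG_BDD_transform1} rewrites it as $\sum_{m,n\geq 0} q^{m^2+n^2+n}{N-m\brack n}_{q^2}{n\brack m}_{q^2}$, which is the claimed closed form.

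For \eqref{Not_AB_lG1_EQN} I would begin from the specialization of \eqref{lG1_Ali_BDD_EQN}, which presents $\sum_{\pi\in\lG_{1,2N}}q^{|\pi|}$ as a sum of two double sums, the $\delta$ term being zero. The first of these is the left-hand side of \eqref{lG_BDD_transform1} and the second is the left-hand side of \eqref{lG_BDD_transform2}, so replacing each by its transformed right-hand side reduces the theorem to a pure $q$-binomial identity: the sum of the right-hand sides of \eqref{lG_BDD_transform1} and \eqref{lG_BDD_transform2} must collapse to the single sum $\sum_{m,n\geq 0} q^{m^2+n^2+n}{N-m\brack n}_{q^2}{n+1\brack m}_{q^2}$. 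I would prove this by splitting the target with the $q$-Pascal recurrence \eqref{Binom_rec} in base $q^2$, namely ${n+1\brack m}_{q^2} = {n\brack m}_{q^2} + q^{2(n+1-m)}{n\brack m-1}_{q^2}$. The ${n\brack m}_{q^2}$ piece reproduces exactly the right-hand side of \eqref{lG_BDD_transform1}, and after the reindexing $m\mapsto m-1$ the remaining piece should match the right-hand side of \eqref{lG_BDD_transform2} term by term.

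The difficulty here is bookkeeping rather than conceptual. The delicate point is verifying that, after the shift $m\mapsto m-1$, the combination of the base exponent $m^2+n^2+n$, the prefactor $q^{2(n+1-m)}$, and the shifted binomial arguments reproduces precisely the quadratic form and the binomial top appearing on the right-hand side of \eqref{lG_BDD_transform2}; completing the square in the shifted index and tracking the change in the first $q^2$-binomial ($N-m\mapsto N-m-1$) is where an offset is easiest to lose. To guard against this I would check the entire chain at $N=0$ and $N=1$, where $\lG_{1,0}$ consists only of the empty partition and $\lG_{1,2}=\{\emptyset,(1),(2)\}$ is immediately enumerated, confirming that both sides agree and that no boundary contribution has been dropped in passing to the even bound $2N$.
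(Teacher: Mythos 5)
Your proposal follows exactly the route the paper itself takes (its proof of this theorem is the one-sentence derivation preceding the statement): specialize \eqref{lG1_Ali_BDD_EQN} and \eqref{lG2_Ali_BDD_EQN} at $x=1$, $N\mapsto 2N$, where the Kronecker term indeed vanishes, identify the two resulting double sums with the left-hand sides of \eqref{lG_BDD_transform1} and \eqref{lG_BDD_transform2}, and, for \eqref{Not_AB_lG1_EQN}, recombine the transformed right-hand sides via the $q^2$-Pascal rule \eqref{Binom_rec} after a shift in $m$. Your treatment of \eqref{Not_AB_lG2_EQN} is complete and correct as written.

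However, the one step you deferred --- ``the remaining piece should match the right-hand side of \eqref{lG_BDD_transform2} term by term'' --- fails against that display as printed, and this is precisely the bookkeeping hazard you flagged. Carrying out your split ${n+1\brack m}_{q^2} = {n\brack m}_{q^2} + q^{2(n+1-m)}{n\brack m-1}_{q^2}$ and re-indexing the leftover piece, one obtains $\sum_{m,n\geq 0} q^{m^2+n^2+3n+1}{N-m-1\brack n}_{q^2}{n\brack m}_{q^2}$, whereas the printed right-hand side of \eqref{lG_BDD_transform2} reads $\sum_{m,n\geq 0} q^{m^2+n^2+3n+3}{N-m+1\brack n}_{q^2}{n\brack m}_{q^2}$: the exponent is off by $2$ and the top of the first binomial by $2$. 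The discrepancy is a typo in \eqref{lG_BDD_transform2} itself, not an error in your plan: its left-hand side equals $0$ at $N=0$ and $q$ at $N=1$, in agreement with the initial values ${}_l\S_2$, ${}_r\S_2$ tabulated in the paper's recurrence proof of that theorem, while the printed right-hand side gives $q^3+q^7$ and $q^3+q^7+q^8+q^9+q^{13}$ at those values. With the corrected right-hand side $\sum_{m,n\geq 0} q^{m^2+n^2+3n+1}{N-m-1\brack n}_{q^2}{n\brack m}_{q^2}$, your term-by-term match is exact and the argument closes. Your proposed sanity checks at $N=0,1$ would have detected exactly this offset: at $N=1$ the printed version would yield $1+q^2+q^3+q^7+q^8+q^9+q^{13}$ for the right side of \eqref{Not_AB_lG1_EQN}, instead of the correct $1+q+q^2$ enumerating $\lG_{1,2}=\{\emptyset,(1),(2)\}$.
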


This theorem is analogous to the one of Berkovich--McCoy--Orrick (Theorem~\ref{GG_Berkovich_McCoy_Orrick_THM}).

It should also be noted that as $N\rightarrow\infty$, \eqref{Not_AB_lG2_EQN} becomes \eqref{lG2_Alladi_Berkovich_EQN} after the use of \eqref{Binom_limit}, change of variables and \eqref{1over_neg_Pochhammer}. The same is not true for the limit of \eqref{Not_AB_lG1_EQN}. The limit $N\rightarrow\infty$ of \eqref{Not_AB_lG1_EQN} after simplifications is \[\sum_{m,n\geq0} \frac{q^{m^2+n^2+n}(1-q^{2n+2})}{(q^2;q^2)_n(q^2;q^2)_{n+1-m}}.\] We do a change of variables with $k=n+1-m$ and compare it with the right-hand side of \eqref{lG1_Alladi_Berkovich_EQN} (with $m$ and $n$ switched followed by $n\mapsto k$). This yields the following corollary.
\begin{corollary}
\begin{equation}\label{simplify_EQN}\sum_{m,k\geq 0} \frac{q^{2m^2+2mk+k^2-m-k}(1-q^{2m+2k})}{(q^2;q^2)_m(q^2;q^2)_k}=\sum_{m,k\geq 0} \frac{q^{2m^2+2mk+k^2-m+k}}{(q^2;q^2)_m(q^2;q^2)_k}.\end{equation}
\end{corollary}

\section{On products of type $(q^a,q^{b};q^M)^{-1}_\infty$}\label{Sec_Products}

In Section~\ref{Sec_Uniform_gap}, we have combinatorially studied minimal gap conditions on partitions. New representations of the generating functions with the gap conditions of Euler's Theorem and the Rogers--Ramanujan identities also fell under this umbrella. Recall that both these theorems \eqref{Euler_Ptt_THM_Combin} and \eqref{RR_THM_Combin} also have a product side. Both these products are of the form \[\frac{1}{(q^s,q^{M-s};q^M)_\infty},\] where the product of Euler's Theorem \eqref{Euler_analytic} is given by $(s,M)=(1,4)$ and the products of Rogers--Ramanujan Theorems \eqref{RR_analytic} are coming from $(s,M)=(1,5)$ and $(2,5)$. So we find it relevant to study the products of this type for positive integers $ s \leq M$.

Let $a,\, b $ and $M$ be positive integers with $M\geq a$ and $M\geq b$. In a more general setting, the product \[\frac{1}{(q^a,q^b;q^M)_\infty}\] is the generating function for the partitions with parts congruent to $a$ or $b$ modulo $M$. We can think of this generating function in a slightly different light. Assume that we start with a sequence $\{a_i\}_{i=0}^\infty$ of positive integers, which consists two interweaving arithmetic progressions $Mj+a$ and $Mj+b$, for $j\geq 0$.  As an end goal, we would like to write a new representation of the generating function \begin{equation}\label{Product_rep_of_P_a_i}\sum_{\pi\in\mathcal{P}_{a_i}} x^{\#(\pi)} q^{|\pi|} = \frac{1}{(xq^a,xq^b;q^M)_\infty},\end{equation} where $\mathcal{P}_{a_i}$ is the set of partitions with parts from the sequence $\{a_i\}$, and $\#(\pi)$ is the number of parts in the partition $\pi$.

Let the smallest element $a_0$ of this sequence be denoted by $s$. Since the sequence is made out of two interlacing sequences there exist two non-negative integers $r$ and $M-r$ such that \[a_{2i+1}-a_{2i}=r\text{   and   }a_{2i+2}-a_{2i+1}=M-r,\] for all $i\geq 0$. Graphically, we can represent the sequence $\{a_i\}$ given in Table~\ref{Chain_diagram_TABLE} 
\begin{table}[h]\caption{Graphical path of the parts in the sequence $\{a_i\}$.}\label{Chain_diagram_TABLE}
\definecolor{ududff}{rgb}{0.3,0.3,1.}
\begin{tikzpicture}[line cap=round,line join=round,x=1cm,y=0.6cm]
\clip(0.5,0.) rectangle (16.5,2.);
\draw [line width=1.pt] (1.,1.)-- (15.36,1.);
\draw (1,0.5) node[anchor=center] {$s$};
\draw (3,0.5) node[anchor=center] {$s+r$};
\draw (6,0.5) node[anchor=center] {$s+M$};
\draw (8,0.5) node[anchor=center] {$s+M$+r};
\draw (11,0.5) node[anchor=center] {$s+2M$};
\draw (13,0.5) node[anchor=center] {$s+2M+r$};
\draw (16,1) node[anchor=center] {$\dots$};
\draw (2,1.5) node[anchor=center] {$r$};
\draw (4.5,1.5) node[anchor=center] {$M-r$};
\draw (7,1.5) node[anchor=center] {$r$};
\draw (9.5,1.5) node[anchor=center] {$M-r$};
\draw (12,1.5) node[anchor=center] {$r$};
\begin{scriptsize}
\draw [fill=ududff] (1.,1.) circle (3.5pt);
\draw [fill=ududff] (3.,1.) circle (3.5pt);
\draw [fill=ududff] (6.,1.) circle (3.5pt);
\draw [fill=ududff] (8.,1.) circle (3.5pt);
\draw [fill=ududff] (11.,1.) circle (3.5pt);
\draw [fill=ududff] (13.,1.) circle (3.5pt);
\end{scriptsize}
\end{tikzpicture}
\end{table}

We would like to construct partitions from $\P_{a_i}$ directly. Similar to the previous sections, we start with a minimal configuration and \[\pi_j = (\underbrace{s,s,\dots,s}_{j-\text{terms}}),\]where we fix the number of parts and then we can move forward the terms to  values in the path represented in Table~\ref{Chain_diagram_TABLE}. Therefore, to make any partition of $\P_{a_i}$ into $j$ parts we need a generating function that enumerates partitions into $\leq j$ parts, where each part is either 0 or $r$ modulo $M$. This can be achieved using the following theorem of Berkovich and the author \cite[Th 6.1, p.24]{BerkovichUncu2}. It should be noted that Theorem~\ref{Phi_N_func} was proven using only combinatorial arguments.

\begin{theorem}[Berkovich, U. \cite{BerkovichUncu2}]\label{Phi_N_func} For variables $a,\, b,\, c$, and $d$ and a non-negative integer $N$,
\[ \Phi_N(a,b,c,d):=\frac{1}{(abcd,abcd)_{\lfloor N/2 \rfloor}} \sum_{i=0}^{\lfloor N/2 \rfloor} { \lfloor N/2\rfloor \brack i}_{abcd} \frac{(-a;abcd)_{i+\lceil N/2\rceil -\lfloor N/2 \rfloor} (-abc;abcd)_i}{(ac;abcd)_{i+\lceil N/2\rceil -\lfloor N/2 \rfloor}} (ab)^{\lfloor N/2\rfloor -i}\] is the generating function for the partitions into parts $\leq N$ with Stanley-Boulet weights (where, starting from the largest part, odd-indexed parts are weighted by alternating $a$ and $b$'s, and even-indexed parts are weighted by alternating $c$ and $d$'s).
\end{theorem}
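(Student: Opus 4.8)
The plan is to prove the identity by induction on $N$: I will show that the combinatorial generating function and the closed form on the right obey one common recurrence together with matching initial data. Write $\Phi_N$ for the sum of the Stanley--Boulet weight over all partitions $\lambda$ with largest part $\leq N$, where a cell in row $i$ and column $j$ is coloured $a,b,c,d$ according to the parities of $i$ and $j$. The first step is to record the weight of a single row: a part in an odd-indexed row contributes $a^{\lceil\lambda_i/2\rceil}b^{\lfloor\lambda_i/2\rfloor}=(ab)^{\lfloor\lambda_i/2\rfloor}a^{\lambda_i\bmod 2}$ and a part in an even-indexed row contributes $(cd)^{\lfloor\lambda_i/2\rfloor}c^{\lambda_i\bmod 2}$. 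Passing to the coordinates $\mu_i=\lfloor\lambda_i/2\rfloor$ and $\epsilon_i=\lambda_i\bmod 2$ factors the weight into a piece depending on the halved partition $\mu$ (whose parts are $\leq\lfloor N/2\rfloor$, the two colours of a row-pair combining into the single base $abcd$) and a boundary piece recording the parities $\epsilon_i$. This is the structural reason the base $abcd$, the $q$-binomial ${\lfloor N/2\rfloor\brack i}_{abcd}$, and the factor $(ab)^{\lfloor N/2\rfloor-i}$ appear, and it suggests that the summation index $i$ is a Durfee-type index for $\mu$.

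To obtain the recurrence I would peel the leftmost column. Its $\ell$ cells are coloured $a,c,a,c,\dots$ downward and contribute $a^{\lceil\ell/2\rceil}c^{\lfloor\ell/2\rfloor}$, while deleting it shifts every remaining column index by one and hence swaps $a\leftrightarrow b$ and $c\leftrightarrow d$ in the weight of the residual partition with largest part $\leq N-1$. Because the colouring has period two in the column direction, applying this twice restores the original parities and relates $\Phi_N$ to $\Phi_{N-2}$, which is exactly why the natural step is $N\mapsto N-2$ and why $\lfloor N/2\rfloor$ governs everything. I would run this argument separately for $N$ even and $N$ odd, since the discrepancy $\lceil N/2\rceil-\lfloor N/2\rfloor$ is precisely the shift appearing in the two Pochhammer symbols $(-a;abcd)_{i+\lceil N/2\rceil-\lfloor N/2\rfloor}$ and $(ac;abcd)_{i+\lceil N/2\rceil-\lfloor N/2\rfloor}$.

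On the closed-form side I would verify the same recurrence purely formally, using the $q$-binomial recurrence \eqref{Binom_rec} in base $abcd$ to split ${\lfloor N/2\rfloor\brack i}_{abcd}$, together with the elementary shifts $(-a;abcd)_{k+1}=(1+a)(-a\cdot abcd;abcd)_k$ and $(ac;abcd)_{k+1}=(1-ac)(ac\cdot abcd;abcd)_k$ to reconcile the boundary factors after the inductive step. The base cases are immediate: $N=0$ gives the empty partition and the closed form collapses to $1$, while $N=1$ gives the partitions $1^j$ with total weight $\sum_{j\geq0}a^{\lceil j/2\rceil}c^{\lfloor j/2\rfloor}=(1+a)/(1-ac)$, which is exactly the $\lfloor N/2\rfloor=0$, $\lceil N/2\rceil-\lfloor N/2\rfloor=1$ evaluation of the right-hand side. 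As a global check, letting $N\to\infty$ and using \eqref{Binom_limit} together with Euler's identity $\sum_{t\geq0}(ab)^t/(abcd;abcd)_t=1/(ab;abcd)_\infty$ collapses the sum to Boulet's product $\frac{(-a,-abc;abcd)_\infty}{(ab,ac,abcd;abcd)_\infty}$, confirming that the formula finitizes the known infinite product.

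The hard part will be the combinatorial recurrence rather than the formal verification. Peeling a column couples the residual weight to the column length $\ell$, i.e.\ to the number of parts, so a naive recurrence does not close on $\Phi_N$ alone; moreover, in the halved picture the ordering constraint that $2\mu_i+\epsilon_i$ be non-increasing entangles the halves $\mu_i$ with the parity bits $\epsilon_i$, which is what produces the nontrivial ratio $(-a;abcd)_{i+\lceil N/2\rceil-\lfloor N/2\rfloor}(-abc;abcd)_i/(ac;abcd)_{i+\lceil N/2\rceil-\lfloor N/2\rfloor}$. I therefore expect to introduce a refined quantity $\Phi_{N,i}$ indexed by the Durfee-type statistic $i$ of $\mu$ so that the recurrence closes term by term, and the real care will go into checking that the parity bookkeeping in the boundary Pochhammers matches this refinement in both parities of $N$.
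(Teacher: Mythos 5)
You should first be aware that this paper contains no proof of Theorem~\ref{Phi_N_func} to compare against: the result is imported from \cite{BerkovichUncu2} (Theorem~6.1 there), with only the remark that the proof given in that source is purely combinatorial. Judged on its own merits, your proposal gets all of its checkable peripheral claims right: the row weights $(ab)^{\lfloor\lambda_i/2\rfloor}a^{\lambda_i\bmod 2}$ and $(cd)^{\lfloor\lambda_i/2\rfloor}c^{\lambda_i\bmod 2}$, the base cases $\Phi_0=1$ and $\Phi_1=(1+a)/(1-ac)$, and the $N\to\infty$ collapse to Boulet's product $(-a,-abc;Q)_\infty/\big((Q;Q)_\infty(ab;Q)_\infty(ac;Q)_\infty\big)$ with $Q=abcd$ all verify correctly. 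But the core of the argument is missing, and you concede as much. Peeling the leftmost column of length $\ell$ leaves a partition with largest part $\leq N-1$ \emph{and at most $\ell$ parts}, in the swapped weights $(b,a,d,c)$; peeling twice therefore produces nested bounds on the number of parts, so the recursion does not relate $\Phi_N$ to $\Phi_{N-2}$ but to a doubly bounded generating function (largest part $\leq N-2$, at most $M$ parts) for which the theorem supplies no closed form. Your induction thus requires formulating and proving a strictly stronger statement, and your proposed remedy --- a refined $\Phi_{N,i}$ with a Durfee-type index --- is never defined; neither the closed combinatorial recurrence for it nor the matching formal recurrence for the right-hand side is written down. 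That is precisely the hard content of the theorem, so what you have is a program, not a proof.

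A concrete pointer toward closing the gap, and toward the style of argument the cited source uses: since ${\lfloor N/2\rfloor \brack i}_{Q}\big/(Q;Q)_{\lfloor N/2\rfloor}=1\big/\big((Q;Q)_i\,(Q;Q)_{\lfloor N/2\rfloor-i}\big)$, the right-hand side of the theorem is exactly the convolution $\sum_i A_i\,B_{\lfloor N/2\rfloor-i}$ with
\begin{equation*}
A_i=\frac{(-a;Q)_{i+\epsilon}\,(-abc;Q)_i}{(Q;Q)_i\,(ac;Q)_{i+\epsilon}},\qquad B_t=\frac{(ab)^t}{(Q;Q)_t},\qquad \epsilon=\lceil N/2\rceil-\lfloor N/2\rfloor.
\end{equation*}
This shape does not ask for a two-step recursion at all; it asks for a direct Boulet-style bijective decomposition of each partition with parts $\leq N$ into two constituents whose ``widths'' (in pairs of columns) add to $\lfloor N/2\rfloor$, with $i$ a statistic read off from the partition. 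Reorganizing your halved-plus-parity coordinates $(\mu,\epsilon)$ around that splitting, rather than around column peeling, is the more economical route and avoids the non-closing recurrence entirely.
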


Considering conjugate partitions, $\phi_N(a,b,c,d)$ is the generating function for the partitions into $\leq N$ parts where the odd and even-indexed rows are now weighted with the weights alternating $a$ and $c$'s, and alternating $b$ and $d$'s, respectively. With the choice of variables $(a,b,c,d) = (q^r, q^r, q^{M-r} ,q^{M-r})$ we see that $\Phi_N(q^r, q^r, q^{M-r} ,q^{M-r})$ is the generating function of partitions into $\leq N$ parts where each part is either 0 or $r$ modulo $M$. This is what is required to build the partitions from $\P_{a_i}$ starting from minimal configuration $\pi_j$'s. Given $j$, \[q^{sj}\Phi_j(q^r, q^r, q^{M-r} ,q^{M-r})\] is the generating function for partitions from $\P_{a_i}$ into exactly $j$ parts. Moreover, since we start with fixing the number of parts, it is straightforward to include a variable $x$ to count the number of parts. This way we get a new double sum representation of \eqref{Product_rep_of_P_a_i}:

\begin{theorem}\label{Product_Double_sum_THM}
\begin{align}\nonumber \sum_{\pi \in \P_{a_i}} x^{\#(\pi)} q^{|\pi|} &= \frac{1}{(xq^s,xq^{s+r};q^M)_\infty}\\\label{Product_double_sum_EQN}&= \sum_{j\geq0} \frac{x^j q^{sj}}{(q^{2M},q^{2M})_{\lfloor j/2 \rfloor}} \sum_{i=0}^{\lfloor j/2 \rfloor} { \lfloor j/2\rfloor \brack i}_{q^{2M}} \frac{(-q^r;q^{2M})_{i+\lceil j/2\rceil -\lfloor j/2 \rfloor} (-q^{M+r};q^{2M})_i}{(q^M;q^{2M})_{i+\lceil j/2\rceil -\lfloor j/2 \rfloor}} (q^{2r})^{\lfloor j/2\rfloor -i}. \end{align}
\end{theorem}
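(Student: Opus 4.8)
The plan is to verify the claimed identity \eqref{Product_double_sum_EQN} by justifying the combinatorial decomposition already sketched in the surrounding text, and then reconciling it with the stated product. First I would fix the parameters: set $s = a_0$ and let $r = a_1 - a_0$, so that by construction $a_{2i+1} - a_{2i} = r$ and $a_{2i+2} - a_{2i+1} = M - r$, exactly as in Table~\ref{Chain_diagram_TABLE}. The crucial observation is that the second equality in the theorem is really two statements glued together: the product formula $\frac{1}{(xq^s,xq^{s+r};q^M)_\infty}$ is just the generating function for partitions into parts drawn from the sequence $\{a_i\}$ (each residue class $s \imod M$ and $s+r \imod M$ contributing a geometric factor, tracked by $x$), while the double-sum side is what the minimal-configuration construction produces.

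The main body of the argument is to establish that, for each fixed number of parts $j$, the quantity $q^{sj}\,\Phi_j(q^r,q^r,q^{M-r},q^{M-r})$ enumerates partitions from $\P_{a_i}$ with exactly $j$ parts, weighted by $x^j$. Here I would invoke Theorem~\ref{Phi_N_func} directly as a black box. The key step is to identify the Stanley--Boulet weighting under the specialization $(a,b,c,d)=(q^r,q^r,q^{M-r},q^{M-r})$. Starting from the minimal configuration $\pi_j = (\underbrace{s,\dots,s}_{j})$, I would move the parts forward along the path in Table~\ref{Chain_diagram_TABLE}: each part advances through increments of size $r$ and $M-r$ in alternation. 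Passing to the conjugate partition (as the text indicates, using $\phi_N$ rather than $\Phi_N$) translates ``each part is $0$ or $r$ modulo $M$, bounded by $j$ parts'' into the row-weighting that $\Phi_j$ counts. Thus $\Phi_j(q^r,q^r,q^{M-r},q^{M-r})$ is exactly the generating function for the total forward displacement of the $j$ parts off their minimal value $s$, and multiplying by $q^{sj}$ restores the base norm while $x^j$ records the part count. Summing over all $j\geq 0$ and substituting the explicit formula from Theorem~\ref{Phi_N_func} with $abcd = q^{2M}$, $ab = q^{2r}$, $ac = q^M$, $-a = -q^r$, and $-abc = -q^{M+r}$ yields precisely the right-hand side of \eqref{Product_double_sum_EQN}.

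I expect the main obstacle to be the weight-bookkeeping in the conjugation step, specifically checking that the alternating $(a,b,c,d) = (q^r,q^r,q^{M-r},q^{M-r})$ assignment correctly reproduces the two interleaved arithmetic progressions with common gaps $r$ and $M-r$. One must confirm that, reading along the conjugate rows, the accumulated weight of a part that has advanced $t$ full steps along the path in Table~\ref{Chain_diagram_TABLE} is the correct power of $q$, namely that moving past an $r$-gap contributes $q^r$ and moving past an $(M-r)$-gap contributes $q^{M-r}$, with the alternation handled by the even/odd indexing in the Stanley--Boulet scheme. Once this identification is pinned down, the remainder is a direct substitution of specialized variables into Theorem~\ref{Phi_N_func} together with the elementary observation that the product $\frac{1}{(xq^s,xq^{s+r};q^M)_\infty}$ enumerates $\P_{a_i}$; no further $q$-series manipulation should be required, since the difficult series identity is entirely absorbed into the cited theorem.
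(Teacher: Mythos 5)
Your proposal matches the paper's proof essentially step for step: the same minimal configuration $\pi_j=(s,\dots,s)$, the same identification of forward displacements along the path of Table~\ref{Chain_diagram_TABLE} with partitions into $\leq j$ parts congruent to $0$ or $r$ modulo $M$, the same invocation of Theorem~\ref{Phi_N_func} via conjugation with the specialization $(a,b,c,d)=(q^r,q^r,q^{M-r},q^{M-r})$, and the same substitutions $abcd=q^{2M}$, $ab=q^{2r}$, $ac=q^M$ into the explicit formula. The argument is correct as it stands.
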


Some corollaries of this theorem are as follows:

\begin{corollary}\label{Prod_Corollary} For the choices of the triplets $(s,r,M) = (1,1,2),\ (1,2,4),\ (1,3,5), $ and $(2,1,5)$ the Theorem~\ref{Product_Double_sum_THM} yields
\begin{align}
\label{Prod_GF_Ordinary} \frac{1}{(xq;q)_\infty} &= \sum_{j\geq0} \frac{x^j q^{j}}{(q^{4},q^{4})_{\lfloor j/2 \rfloor}} \sum_{i=0}^{\lfloor j/2 \rfloor} { \lfloor j/2\rfloor \brack i}_{q^{4}} \frac{(-q;q^{4})_{i+\lceil j/2\rceil -\lfloor j/2 \rfloor} (-q^{3};q^{4})_i}{(q^2;q^{4})_{i+\lceil j/2\rceil -\lfloor j/2 \rfloor}} (q^{2})^{\lfloor j/2\rfloor -i},\\
\label{Prod_GF_Distinct}\frac{1}{(xq;q^2)_\infty} &= \sum_{j\geq0} \frac{x^j q^{j}}{(q^{8},q^{8})_{\lfloor j/2 \rfloor}} \sum_{i=0}^{\lfloor j/2 \rfloor} { \lfloor j/2\rfloor \brack i}_{q^{8}} \frac{(-q^2;q^{8})_{i+\lceil j/2\rceil -\lfloor j/2 \rfloor} (-q^{6};q^{8})_i}{(q^4;q^{8})_{i+\lceil j/2\rceil -\lfloor j/2 \rfloor}} (q^{4})^{\lfloor j/2\rfloor -i}.
\end{align}
\end{corollary}

The choices $(s,r,M)=(1,3,5)$ and $(2,1,5)$ proves Theorem~\ref{RRprod_intro_thm}.

Moreover, the double sum that appears in \eqref{Product_double_sum_EQN} calls for an even-odd dissection of the variable $j$. This split and changing the order of summations give us the following relation:

\begin{align}
\nonumber\frac{1}{(xq^s,xq^{s+r};q^M)_\infty}&
= \sum_{j\geq 0} \frac{x^{2j} q^{2sj}}{(q^{2M};q^{2M})_j} \sum_{i=0}^j {j\brack i}_{q^{2M}} \frac{(-q^r,-q^{r+M};q^{2M})_i}{(q^M;q^{2M})_i} q^{2r(j-i)}\\
\label{EODD_sp1}&\hspace{1cm}+xq^s \frac{1+q^r}{1-q^M}\sum_{j\geq 0} \frac{x^{2j} q^{2sj}}{(q^{2M};q^{2M})_j} \sum_{i=0}^j {j\brack i}_{q^{2M}} \frac{(-q^{r+2M},-q^{r+M};q^{2M})_i}{(q^{3M};q^{2M})_i} q^{2r(j-i)}\\
\intertext{After rewriting the $q$-binomial coefficient as $q$-Pochhammer symbols, doing the simple cancellations, changing the order of summation, and shifting $j\mapsto j+i$, we can employ the $q$-exponential sum \cite[p.354, II.1]{Gasper_Rahman} and get}
\nonumber\frac{1}{(xq^s,xq^{s+r};q^M)_\infty}&=\frac{1}{(x^2q^{2(s+r)};q^{2M})}\left[{}_2\phi_1 \left(\begin{array}{c} -q^{r},\, -q^{r+M}\\ q^M\end{array}; q^{2M},x^2 q^{2s}\right)\right.\\\label{EODD_sp2}&\hspace{3cm}+xq^s \frac{1+q^r}{1-q^{M}}\left. {}_2\phi_1 \left(\begin{array}{c} -q^{r+2M},\, -q^{r+M}\\ q^{3M}\end{array}; q^{2M},x^2 q^{2s}\right) \right],
\end{align} where \[{}_r\phi_s \left(\begin{array}{c}a_1,a_2,\dots,a_r\\ b_1,b_2,\dots,b_s\end{array}; q,x\right) = \sum_{n\geq 0 } \frac{(a_1,a_2,\dots,a_r;q)_n}{(q,b_1,b_2,\dots,b_s;q)_n} \left((-1)^n q^{n(n-1)/2}\right)^{1+s-r} x^n.\]

Therefore, by making basic simplifications in \eqref{EODD_sp2} we get the following theorem.

\begin{theorem}\label{JTP_dissection_THM} For $M$ and $s$ positive integers, and $0\leq r \leq M$  we have
\begin{equation}\label{JTP_raw_EQN}\frac{(-xq^{s+r};q^M)_\infty}{(xq^s;q^M)_\infty} = {}_2\phi_1 \left(\begin{array}{c} -q^{r},\, -q^{r+M}\\ q^M\end{array}; q^{2M},x^2 q^{2s}\right) + xq^s \frac{1+q^r}{1-q^{M}} {}_2\phi_1 \left(\begin{array}{c} -q^{r+2M},\, -q^{r+M}\\ q^{3M}\end{array}; q^{2M},x^2 q^{2s}\right).\end{equation}
\end{theorem}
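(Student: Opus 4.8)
The plan is to obtain Theorem~\ref{JTP_dissection_THM} directly from the identity \eqref{EODD_sp2}, which has already been assembled in the text via the even-odd dissection of \eqref{Product_double_sum_EQN} followed by the $q$-exponential sum. Inspecting \eqref{EODD_sp2}, the bracketed expression on its right-hand side is \emph{verbatim} the right-hand side of \eqref{JTP_raw_EQN}. Hence it suffices to clear the prefactor $1/(x^2q^{2(s+r)};q^{2M})_\infty$. First I would multiply both sides of \eqref{EODD_sp2} by $(x^2q^{2(s+r)};q^{2M})_\infty$; this leaves the sum of the two ${}_2\phi_1$'s untouched on the right, while the left-hand side becomes $\frac{(x^2q^{2(s+r)};q^{2M})_\infty}{(xq^s;q^M)_\infty(xq^{s+r};q^M)_\infty}$.

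The single ingredient needed to finish is the elementary factorization $(a^2;p^2)_\infty=(a;p)_\infty(-a;p)_\infty$, which is immediate from $1-a^2p^{2n}=(1-ap^n)(1+ap^n)$. Applying it with $a=xq^{s+r}$ and $p=q^M$ gives
\[(x^2q^{2(s+r)};q^{2M})_\infty=(xq^{s+r};q^M)_\infty(-xq^{s+r};q^M)_\infty.\]
Substituting this into the numerator of the left-hand side and canceling the common factor $(xq^{s+r};q^M)_\infty$ collapses it to $\frac{(-xq^{s+r};q^M)_\infty}{(xq^s;q^M)_\infty}$, which is precisely the left-hand side of \eqref{JTP_raw_EQN}. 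This completes the argument.

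There is essentially no obstacle in this last step; the substantive work sits upstream in producing \eqref{EODD_sp2}. Were I forced to reprove that from scratch, I would split \eqref{Product_double_sum_EQN} according to the parity of $j$ to reach \eqref{EODD_sp1}, rewrite each $q$-binomial coefficient ${\lfloor j/2\rfloor \brack i}_{q^{2M}}$ in Pochhammer form, perform the obvious cancellations, reindex by $j\mapsto j+i$, and then interchange the order of summation so that the inner sum becomes a ${}_2\phi_1$ summable by the $q$-exponential sum \cite[p.354, II.1]{Gasper_Rahman}. The only points demanding care there are tracking the $\lceil j/2\rceil-\lfloor j/2\rfloor$ shift appearing in the numerator Pochhammer symbols and the base change from $q^M$ to $q^{2M}$; once \eqref{EODD_sp2} is in hand, the passage to \eqref{JTP_raw_EQN} is the purely formal one-line cancellation described above.
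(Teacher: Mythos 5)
Your proposal is correct and follows exactly the paper's route: the paper derives Theorem~\ref{JTP_dissection_THM} from \eqref{EODD_sp2} by precisely the ``basic simplifications'' you spell out, namely clearing the prefactor and using $(a;p)_\infty(-a;p)_\infty=(a^2;p^2)_\infty$ with $a=xq^{s+r}$, $p=q^M$ to cancel $(xq^{s+r};q^M)_\infty$. Your sketch of the upstream derivation of \eqref{EODD_sp2} likewise matches the paper's even-odd dissection and application of the $q$-exponential sum, so there is nothing to correct.
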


One can also prove Theorem~\ref{JTP_dissection_THM} using the $q$-binomial theorem \cite[p.354, II.3]{Gasper_Rahman} (recalled in Section~\ref{Sec_Uniform_gap}). To that end, one needs to combine the sums on the right-hand side of \eqref{JTP_raw_EQN}. This summation and the even-odd split, \eqref{EODD_sp1}-\eqref{EODD_sp2}, can be considered as a direct hypergeometric proof of the second equality in Theorem~\ref{Product_Double_sum_THM}.  More importantly, if $x=1$ and $M=2s+r$ the equation \eqref{JTP_raw_EQN} leads to the following dissection.

\begin{corollary}\label{JTP_Cor} For positive integer $s$ and non-negative $r$ we have
\begin{align} \nonumber(-q^s,-q^{s+r},q^{2s+r};q^{2s+r})_\infty = (-q^{2(2s+r)+r}&,-q^{2(2s+r)-r},q^{4(2s+r)};q^{4(2s+r)})_\infty \\\label{JTP_ready_eqn} &+ q^s (-q^{r},-q^{4(2s+r)-r},q^{4(2s+r)};q^{4(2s+r)})_\infty .\end{align}
\end{corollary}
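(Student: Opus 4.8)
My plan is to prove the identity directly from the Jacobi triple product, treating every infinite product in the statement as a theta function; the hypothesis $M:=2s+r$ will enter only through the elementary remark that it forces $s+r=M-s$, which is exactly what turns the left-hand side into an index-symmetric theta series amenable to a two-dissection.

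First I would record the Jacobi triple product in the form $\sum_{n\in\Z} z^n Q^{n(n-1)/2} = (Q;Q)_\infty(-z;Q)_\infty(-Q/z;Q)_\infty$ and apply it with $Q=q^M$ and $z=q^s$. Since $q^M/z = q^{M-s} = q^{s+r}$ by the choice $M=2s+r$, the left-hand side $(-q^s,-q^{s+r},q^{M};q^{M})_\infty$ collapses to the bilateral series $\sum_{n\in\Z} q^{sn + Mn(n-1)/2}$. This is the one place where the hypothesis is genuinely used: it makes $-q^s$ and $-q^{s+r}$ the conjugate pair $-z,\,-Q/z$, so that the left-hand side is a true theta function rather than merely the quotient of products appearing in \eqref{JTP_raw_EQN}.

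Next I would split this sum by the parity of $n$. Writing $n=2k$ gives exponent $2Mk^2 - rk$ (using $2s-M=-r$), and writing $n=2k+1$ gives exponent $2Mk^2 + (2M-r)k + s$ (using $(M-r)/2 = s$). Each half is again of theta type with base $q^{4M}$: from the template $\sum_{k\in\Z} q^{2Mk^2+\beta k} = \sum_{k\in\Z} (q^{\beta+2M})^k (q^{4M})^{k(k-1)/2}$, the even half ($\beta=-r$, so $z=q^{2M-r}$ and $q^{4M}/z = q^{2M+r}$) becomes $(-q^{2M+r},-q^{2M-r},q^{4M};q^{4M})_\infty$, while the odd half ($\beta=2M-r$, so $z=q^{4M-r}$ and $q^{4M}/z=q^{r}$) becomes $q^{s}(-q^{r},-q^{4M-r},q^{4M};q^{4M})_\infty$. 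These are precisely the two terms on the right-hand side of the claimed identity, so their sum restores the full bilateral series and the proof is complete once $M=2s+r$ is substituted back.

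The only real difficulty is bookkeeping: carrying the quadratic and linear exponents correctly through the even/odd substitution and confirming the two reductions $2s-M=-r$ and $(M-r)/2=s$, both immediate from $M=2s+r$, that align the linear terms with the prescribed theta parameters. I would also flag one pitfall, which is why I organize the argument around the triple product rather than around \eqref{JTP_raw_EQN} itself: one cannot read this corollary off by evaluating each ${}_2\phi_1$ in \eqref{JTP_raw_EQN} separately, since at $x=1$ their argument $q^{2s}$ is not the $q$-Gauss value $c/(ab)=q^{-2r}$, so the individual basic hypergeometric series are not summable in closed product form. The triple-product dissection sidesteps this entirely.
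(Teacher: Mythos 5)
Your dissection proof is correct: every step checks out, including the two exponent reductions $2s-M=-r$ and $2s+M=2M-r$, and the three applications of the triple product with $(Q,z)=(q^M,q^s)$, $(q^{4M},q^{2M-r})$, $(q^{4M},q^{4M-r})$ reproduce exactly the three products in \eqref{JTP_ready_eqn}. However, this is not the proof the paper gives; it is the alternative the paper explicitly sketches in the sentence preceding its proof (``Note that Corollary~\ref{JTP_Cor} can be proven by the Jacobi Triple Product identity\dots''). The paper's actual proof sets $(x,M)=(1,2s+r)$ in \eqref{JTP_raw_EQN} and evaluates each ${}_2\phi_1$ in closed form by the $q$-Kummer (Bailey--Daum) summation \cite[p.354, II.9]{Gasper_Rahman}, then clears denominators. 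The trade-off: your route is self-contained and classical but invokes the Jacobi Triple Product three times, whereas the paper's route deliberately avoids JTP, so that the only inputs are the combinatorially established Theorem~\ref{JTP_dissection_THM} and one standard summation formula --- a choice consonant with the paper's stated interest (Section~\ref{Sec_Outlook}) in derivations of Rogers--Ramanujan-type results that bypass the triple product.

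One claim in your closing paragraph is wrong and should be struck: you assert that at $x=1$ the two ${}_2\phi_1$'s in \eqref{JTP_raw_EQN} ``are not summable in closed product form'' because the argument $q^{2s}$ is not the $q$-Gauss value $q^{-2r}$. You checked the wrong template. With base $Q=q^{2M}$ and $M=2s+r$, the first series has $a=-q^{r}$, $b=-q^{r+M}$, $c=q^{M}=aQ/b$, and argument $q^{2s}=q^{M-r}=-Q/b$; the second has $a=-q^{r+2M}$, $b=-q^{r+M}$, $c=q^{3M}=aQ/b$, and again argument $-Q/b$. Both are therefore exactly in Bailey--Daum ($q$-Kummer) form and each \emph{is} summable to a product --- indeed, doing precisely this is the paper's proof, yielding \eqref{Kummered}. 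So the specialization of \eqref{JTP_raw_EQN} is not a dead end you sidestepped; it is a second complete proof, and your remark would mislead a reader into thinking the hypergeometric route fails.
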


Note that Corollary~\ref{JTP_Cor} can be proven by the Jacobi Triple Product identity \cite[p.357, II.28]{Gasper_Rahman} \[\sum_{k=-\infty}^\infty q^{k^2}z^k = (q^2,-zq,-q/z;q^2)_\infty.\] We first write the left-hand side product as a bilateral series using this identity and then do the even-odd dissection of the summation variable. Then employing the Jacobi Triple product identity twice for the dissected series gives \eqref{JTP_ready_eqn}. Here we also give a proof that relies on Theorem~\ref{JTP_dissection_THM}, which we get using Combinatorial tools only, and the $q$-Kummer sum \cite[p.354, II.9]{Gasper_Rahman} \[{}_2\phi_1 \left(\begin{array}{c} a,b \\ aq/b \end{array}; q, -\frac{q}{b}\right) = \frac{(-q;q)_\infty (aq,aq^2/b^2;q^2)_\infty}{(-q/b,aq/b;q)_\infty}.\]

\begin{proof} Let $(x,M) = (1,2s+r)$ in \eqref{JTP_raw_EQN}. Then, $q$-Kummer sum yields the following:
\begin{align}\frac{(-q^{s+r};q^{2s+r})_\infty}{(q^s;q^{2s+r})_\infty} = &
\label{Kummered}\frac{(-q^{2(2s+r)})_\infty (-q^{2(2s+r)+r},-q^{2(2s+r)-r};q^{4(2s+r)})_\infty}{(q^{2s},q^{2s+r};q^{2(2s+r)})_\infty}
\\\nonumber&\hspace{1cm}+ q^s\frac{(-q^{2(2s+r)})_\infty (-q^{r},-q^{4(2s+r)-r};q^{4(2s+r)})_\infty}{(q^{2s},q^{2s+r};q^{2(2s+r)})_\infty}. \end{align}
One only needs to clear the denominators and do simple cancellations to get \eqref{JTP_ready_eqn} from \eqref{Kummered}. 
\end{proof}

\section{Outlook}\label{Sec_Outlook}
Comparing Theorem~\ref{RRprod_intro_thm}, Corollaries~\ref{UG_Corollary} and \ref{Prod_Corollary} gives us intriguing $q$-series identities:

\begin{theorem}\label{Outlook_thm} We have
\begin{align}
\nonumber \sum_{m,n\geq 0} &\frac{x^{m+2n}q^{\frac{m(m+3)}{2}+2n}(1+xq)}{(q;q)_m(q^2;q^2)_n}\\ \label{Ordinary_double_eqn}&= \sum_{j=0} \frac{x^j q^{j}}{(q^{4},q^{4})_{\lfloor j/2 \rfloor}} \sum_{i=0}^{\lfloor j/2 \rfloor} { \lfloor j/2\rfloor \brack i}_{q^{4}} \frac{(-q;q^{4})_{i+\lceil j/2\rceil -\lfloor j/2 \rfloor} (-q^{3};q^{4})_i}{(q^2;q^{4})_{i+\lceil j/2\rceil -\lfloor j/2 \rfloor}} (q^{2})^{\lfloor j/2\rfloor -i},\\
\nonumber\sum_{m,n\geq 0} &\frac{q^{m^2+m+2mn+2n^2+n}(1+q^{1+m+2n})}{(q;q)_m(q^2;q^2)_n} \\\label{Distinct_double_eqn}&= \sum_{j=0} \frac{q^{j}}{(q^{8},q^{8})_{\lfloor j/2 \rfloor}} \sum_{i=0}^{\lfloor j/2 \rfloor} { \lfloor j/2\rfloor \brack i}_{q^{8}} \frac{(-q^2;q^{8})_{i+\lceil j/2\rceil -\lfloor j/2 \rfloor} (-q^{6};q^{8})_i}{(q^4;q^{8})_{i+\lceil j/2\rceil -\lfloor j/2 \rfloor}} (q^{4})^{\lfloor j/2\rfloor -i},\\
\nonumber\sum_{m,n\geq 0} &\frac{q^{\frac{m(3m+1)}{2}+4mn+4n^2}(1+q^{2m+4n+1})}{(q;q)_m(q^2;q^2)_n}\\\label{RR1_double_eqn}&= \sum_{j=0} \frac{ q^{j}}{(q^{10},q^{10})_{\lfloor j/2 \rfloor}} \sum_{i=0}^{\lfloor j/2 \rfloor} { \lfloor j/2\rfloor \brack i}_{q^{10}} \frac{(-q^3;q^{10})_{i+\lceil j/2\rceil -\lfloor j/2 \rfloor} (-q^{8};q^{10})_i}{(q^5;q^{10})_{i+\lceil j/2\rceil -\lfloor j/2 \rfloor}} (q^{6})^{\lfloor j/2\rfloor -i},\\[-1.5ex]\nonumber\\
\nonumber\sum_{m,n\geq 0} &\frac{q^{\frac{m(3m+1)}{2} + 4mn +4n^2 +m+ 2n}(1+q^{2m+4n+2})}{(q;q)_m(q^2;q^2)_n} \\\label{RR2_double_eqn}&= \sum_{j=0} \frac{ q^{2j}}{(q^{10},q^{10})_{\lfloor j/2 \rfloor}} \sum_{i=0}^{\lfloor j/2 \rfloor} { \lfloor j/2\rfloor \brack i}_{q^{10}} \frac{(-q;q^{10})_{i+\lceil j/2\rceil -\lfloor j/2 \rfloor} (-q^{6};q^{10})_i}{(q^5;q^{10})_{i+\lceil j/2\rceil -\lfloor j/2 \rfloor}} (q^{2})^{\lfloor j/2\rfloor -i}.
\end{align}
\end{theorem}

Recall that the left-hand sides are double sum representations of generating functions for the partitions into distinct parts, partitions satisfying the first and the second Rogers--Ramanujan identities, respectively. The right-hand sides correspond to double series representations of partitions into odd parts, partitions into parts $\pm 1$ modulo 5, and $\pm 2$ modulo 5, respectively. Notice that the variable $x$ is only present in the first equality \eqref{Ordinary_double_eqn}, where both sides of the identities are enumerating ordinary partitions and $x$ is counting the number of parts. In the other identities  \eqref{Distinct_double_eqn}-\eqref{RR2_double_eqn}, we need to set $x=1$ as there is not a clear connection regarding the number of parts in Euler's Partition theorem or in the Rogers--Ramanujan Identities.

These equations call for direct proofs. We are hopeful that \eqref{RR1_double_eqn} and \eqref{RR2_double_eqn} lead to new proofs and finite analogues of the Rogers--Ramanujan identities, which might not require the use of the Jacobi Triple Product identity; maybe even one that is purely combinatorial in nature. One thing that needs exploration is attempting to introduce new variables by trial and error in these identities. A generalization of the identities \eqref{RR1_double_eqn} and \eqref{RR2_double_eqn} with a new variable may hint us to a new partition statistic to build a bijective proof on.

On the other hand, on the basic hypergeometric transformations side, this research gave rise to many identities that the author is planning on presenting in detail in his forthcoming paper. One of these such identities is as follows.
\begin{theorem} For $m$ and $n$ non-negative integers, we have
\begin{align*}{m\brack n}_{q^2} {}_2\phi_1&\left( \genfrac{}{}{0pt}{}{q^{-2n},\ q^{-2m+2n}}{ q^{-2m}};q^2,-q \right)\\\ &= q^{n \choose 2}{2m-2n+1\brack n}_{q} {}_4\phi_3\left( \genfrac{}{}{0pt}{}{q^{-n},\ q^{1-n},\ q^{2m-2n+2},\ -q^{2m-2n+2}}{ -q^{-2},\ q^{2m-3n+2},\ q^{2m-3n+3}} ;q^2,q^2 \right).
\end{align*}
\end{theorem}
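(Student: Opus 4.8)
The plan is to treat this as a terminating quadratic transformation and, if a direct match proves elusive, to fall back on creative telescoping. First I would record that both sides are polynomials in $q$ for each fixed pair $(m,n)$: the left-hand ${}_2\phi_1$ terminates because of the numerator parameter $q^{-2n}$, while the right-hand ${}_4\phi_3$ terminates because the pair $q^{-n},q^{1-n}$ (in base $q^2$) forces a vanishing factor, so the inner sum runs only up to $j=\lfloor n/2\rfloor$. Thus the claim reduces to a finite polynomial identity, and it suffices to establish it for each $(m,n)$.

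The structural signal I would pursue first is the argument $-q$ against the base $q^2$: since $-q=-(q^2)^{1/2}$ is a half-integer power of the base, this is exactly the fingerprint of a quadratic transformation. Moreover the coupled parameters $q^{-n}$ and $q^{1-n}$ on the right recombine through $(a;q)_{2j}=(a;q^2)_j(aq;q^2)_j$ (with $a=q^{-n}$) into the single base-$q$ factor $(q^{-n};q)_{2j}$, which is the standard way a base-$q$ series re-expresses itself after a $q\mapsto q^2$ quadratic transformation. I would therefore search the quadratic-transformation tables of Gasper and Rahman \cite{Gasper_Rahman} for the transformation carrying a terminating ${}_2\phi_1$ of argument $\pm q^{1/2}$ into a base-$q^2$ series, specialize its parameters to $(q^{-2n},q^{-2m+2n};q^{-2m})$, and track the Pochhammer prefactor it produces. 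The expectation is that this prefactor collapses to exactly the ratio ${m\brack n}_{q^2}/(q^{\binom{n}{2}}{2m-2n+1\brack n}_q)$, which would complete the proof and simultaneously explain the base-$q$ binomial appearing on the right of an otherwise base-$q^2$ identity.

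Should no catalogued transformation match cleanly, I would instead prove the identity by creative telescoping, in the spirit of the earlier sections. Concretely, I would feed both sides to Sigma \cite{Sigma} and/or qMultiSum \cite{qMultiSum} to produce a recurrence in $m$ with $n$ held fixed, verify that the two recurrences coincide, and then check the finitely many initial values $m=n,n+1,\dots$ needed to pin down the common solution. A technical wrinkle here is the parity of $n$: because the right-hand termination index depends on whether $n$ is even or odd, I would split into the cases $n=2n'$ and $n=2n'+1$ before extracting recurrences, exactly as was done for the left-hand summands in the proof of Theorem~\ref{Fin_GG_Ali_THM} and its little G\"ollnitz analogue.

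The main obstacle I anticipate is the first route: correctly identifying the quadratic transformation and, in particular, reconciling the negative argument $-q$ and the mismatched parameter counts (two-over-one on the left against four-over-three on the right) with a standard form, while coaxing the prefactor into the stated binomial ratio. If that approach stalls, the recurrence route is guaranteed to terminate, trading the conceptual transparency of a named transformation for a parity case split and a short finite initial-value verification.
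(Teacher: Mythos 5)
First, a framing point: the paper offers no proof of this statement to compare against --- it is announced in Section~\ref{Sec_Outlook} as a sample of results deferred to a forthcoming paper. Your submission, in turn, is a plan rather than a proof: neither route is executed. No quadratic transformation from \cite{Gasper_Rahman} is actually identified and matched, and no recurrences or initial values are actually produced. Your structural observations are sound (termination at $j=n$ on the left and $j=\lfloor n/2\rfloor$ on the right, the recombination $(q^{-n};q^2)_j(q^{1-n};q^2)_j=(q^{-n};q)_{2j}$, the half-power argument $-q$ as a fingerprint of a quadratic transformation), and the telescoping fallback with a parity split is exactly the methodology the paper uses for Theorem~\ref{Fin_GG_Ali_THM}. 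But two steps of your plan need repair. First, the claim that ``both sides are polynomials'' fails in degenerate ranges: when $3n\ge 2m+2$ the denominator parameters $q^{2m-3n+2},q^{2m-3n+3}$ produce vanishing Pochhammer symbols in denominators, and at $m=n$ the right-hand side is a $0\cdot\infty$ product (the binomial ${2m-2n+1\brack n}_q$ vanishes against divergent terms), so the right side must be rewritten with the prefactor absorbed into the summand before any recurrence argument can be run. Second, a recurrence in $m$ with $n$ held fixed proves the identity one $n$ at a time; to conclude for all $n$ you need the recurrence coefficients uniform in $q^n$ and the base cases established as identities in $n$ (or a genuinely bivariate system from qMultiSum \cite{qMultiSum}), which your plan does not address.

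More seriously, had you executed the very initial-value check you propose, you would have discovered that the identity as printed fails. At $(m,n)=(3,2)$ the left-hand side equals $(1+q^2+q^4)\bigl(1+\frac{q(1+q^2)}{1+q^2+q^4}\bigr)=1+q+q^2+q^3+q^4$, while the right-hand side, computed with the printed denominator parameter $-q^{-2}$, equals $q(1+q+q^2)\bigl(1+\frac{q(1+q^4)}{1+q+q^2}\bigr)=q+2q^2+q^3+q^6$; the two agree at $q=1$ but not as polynomials. Replacing $-q^{-2}$ by $-q^2$ restores equality at $(3,2)$ and also at $(4,2)$, where both sides then equal $1+q+q^2+2q^3+3q^4+2q^5+q^6+q^7+q^8$, so the printed statement appears to carry a typo that any proof attempt must first resolve. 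In short: your plan is methodologically reasonable and consistent with the paper's toolkit, but as it stands it is not a proof, and its first executable step --- checking small cases --- is not optional here: it is what reveals that the target statement itself needs correction before either of your routes can succeed.
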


\section{Acknowledgment}

The author would like to thank George E. Andrews and Christian Krattenthaler for their interest and encouragement. The author also wants to thank the anonymous referees for the careful reading of the manuscript and their valuable suggestions.

The research was partly funded by the Austrian Science Fund (FWF) grant numbers SFB50-07 and SFB50-09, and partly by the EPSRC Grant EP/T015713/1.

\vspace{.5cm}


\begin{thebibliography}{99}


\bibitem{Alladi_Berkovich2} K. Alladi, and A. Berkovich, \textit{A Double Bounded Version of Schur's Partition Theorem}, Combinatorica \textbf{22} (2002), 151-168.

\bibitem{Alladi_Berkovich} K. Alladi and A. Berkovich, \textit{G\"ollnitz--Gordon partitions with weights and parity conditions}, in Zeta functions, topology and quantum physics, 1-17, Dev. Math. \textbf{14}, Springer, New York, 2005.


%\bibitem{Andrews_Capparelli}G. E. Andrews, \textit{Schur's theorem. Capparelli's conjecture and q-trinomial coefficients}, Contemporary Mathematics \textbf{166} (1994), 141--154.

\bibitem{Theory_of_Partitions} G. E. Andrews, \textit{The theory of partitions}, Cambridge Mathematical Library, Cambridge University Press, Cambridge, 1998. Reprint of the 1976 original. MR1634067 (99c:11126)

\bibitem{Andrews_q_Tri} G. E. Andrews, \textit{q-Trinomial coefficients and Rogers--Ramanujan type identities}, Analytic number theory (Allerton Park, IL 1989), 1-11, Progr. Math. \textbf{85}, Birkhauser Boston, Boston, MA 1990.


\bibitem{Andrews_Baxter} G. E. Andrews and R. J. Baxter, \textit{Lattice gas generalization of the hard hexagon model. III. q-Trinomial coefficients}, J. Statist. Phys. 47 (1987), no: 3-4, 297-330.


\bibitem{Andrews_Mahlburg_Bringmann} G. E. Andrews, K. Bringmann and K. Mahlburg, \textit{Double series representations for Schur's partition function and related identities}. J. Comb. Theo. A \textbf{132} (2015), 102-119.

\bibitem{Berkovich_McCoy_Orrick} A. Berkovich, B. M. McCoy and W. P. Orrick, \textit{Polynomial identities, indices and duality for the $N=1$ superconformal model $SM(2,4\nu)$}, J. Statist Phys. \textbf{83} (1996), no. 5-6, 795-837.

\bibitem{BerkovichUncu2} A. Berkovich and A. K. Uncu, \textit{On partitions with fixed number of even-indexed and odd-indexed odd parts}, Journal of Number Theory \textbf{167} (2016), 7-30.

\bibitem{BerkovichUncu7} A. Berkovich and A. K. Uncu, \textit{Polynomial identities implying Capparelli's partition theorems}, J. Number Theory 201 (2019), 77-107.

\bibitem{BerkovichUncu8} A. Berkovich and A. K. Uncu, \textit{Elementary polynomial identities involving $q$-trinomial coefficients},  Ann. Comb. 23 (2019), no. 3-4, 549-560.

\bibitem{BerkovichUncu9} A. Berkovich and A. K. Uncu, \textit{Refined $q$-trinomial coefficients and two infinite hierarchies of $q$-series},  Algorithmic Combinatorics: Enumerative Combinatorics, Special Functions and Computer Algebra. Texts \& Monographs in Symbolic Computation (A Series of the Research Institute for Symbolic Computation, Johannes Kepler University, Linz, Austria). Springer, Cham. https://doi.org/10.1007/978-3-030-44559-1\underline{ }4
     
%\bibitem{Dousse_Lovejoy} J. Dousse, and J. Lovejoy, \textit{Generalizations of Capparelli's identity}, Bull. Lond. Math. Soc. 51 (2019), no. 2, 193-206.
     
\bibitem{Gasper_Rahman} G. Gasper and M. Rahman, \textit{Basic hypergeometric series}, Cambridge University Press, 2004.

\bibitem{Kanade_Russell} S. Kanade and M. Russell, \textit{Staircases to analytic sum-sides for many new integer partition identities of
Rogers--Ramanujan type}, Electron. J. Combin. 26 (2019), Art. 1.6.

\bibitem{Kagan1} K. Kur\c{s}ung\"oz, \textit{Andrews--Gordon type series for Capparelli's and G\"ollnitz--Gordon identities}, J. Combin. Theory Ser. A 165 (2019), 117-138.

\bibitem{Kagan2} K. Kur\c{s}ung\"oz, \textit{Andrews--Gordon type series for Kanade--Russell Conjectures}, Ann. Comb. 23 (2019), no. 3-4, 835-888. 


\bibitem{Kagan3} K. Kur\c{s}ung\"oz, \textit{Andrews-Gordon Type Series for Schur's Partition Identity}, arXiv:1812.10039 [math.CO].


\bibitem{qMultiSum} A. Riese, \textit{qMultiSum - A Package for Proving q-Hypergeometric Multiple Summation Identities}, Journal of Symbolic Computation 35 (2003), 349-376.

\bibitem{Sigma} C. Schneider, \textit{Symbolic Summation Assists Combinatorics}, Sem. Lothar. Combin. \textbf{56}, pp.1-36. 2007. Article B56b.

\bibitem{Slater_List} L. J. Slater, \textit{Further identities of the Rogers-Ramanujan type}, Proc. London Math. Soc. \textbf{54} (1952), no 2, 147-167. 

\bibitem{sills}A. V. Sills, \textit{On series expansions of Capparelli's infinite product}, Advances in Applied Mathematics \textbf{33} (2004), no. 2 397-408.


\bibitem{Warnaar_Refined} S. O. Warnaar, \textit{Refined q-trinomial coefficients and character identities}. Proceedings of the Baxter Revolution in Mathematical Physics (Canberra, 2000), J. Statist. Phys. \textbf{102} (2001), no. 3-4, 1065-1081. 

\bibitem{Warnaar_T} S. O. Warnaar, \textit{The generalized Borwein conjecture. II. Refined $q$-trinomial coefficients}, Discrete Math. \textbf{272} (2003), no. 2-3, 215-258.

   \end{thebibliography}
\end{document}